
\documentclass[12pt]{article}
\usepackage[latin1]{inputenc}

\usepackage{fullpage}
\usepackage{amsfonts}
\usepackage{amssymb}
\usepackage{amsmath}
\usepackage{color}
\usepackage[mathscr]{euscript}
\usepackage{mathtools}
\usepackage{stmaryrd}
\usepackage{wasysym}
\usepackage{cases}
\usepackage{tikz}

\usepackage[colors]{optsys}
\def\figures{./}
\numberwithin{theorem}{section}

\newcommand{\EuclidianTop}[2]{\TopNorm{#2}{#1}} 
\newcommand{\EuclidianTopNorm}[2]{\TopNorm{\norm{#2}}{#1}} 
\newcommand{\EuclidianSupport}[2]{\SupportNorm{{#2}}{#1}} 
\newcommand{\EuclidianSupportNorm}[2]{\SupportNorm{\norm{#2}}{#1}} 
\newcommand{\ECapra}{E-Capra} 
\newcommand{\ESPHERE}{{\mathbb S}} 
\newcommand{\EBALL}{{\mathbb B}}

\title{Hidden Convexity in the $l_0$ Pseudonorm}

\author{Jean-Philippe Chancelier 
  and Michel De Lara\footnote{michel.delara@enpc.fr}
  \\ CERMICS, Ecole des Ponts, Marne-la-Vall\'ee, France}

\begin{document}

\maketitle

\begin{abstract}
  The so-called \lzeropseudonorm\ on~$\RR^d$
  counts the number of nonzero components of a vector.
  It is well-known that the \lzeropseudonorm\ is not convex, as 
  its Fenchel biconjugate is zero.
  In this paper, we introduce a suitable conjugacy, induced by 
  a novel coupling, \ECapra, that has
  the property of being constant along primal rays
  like the \lzeropseudonorm.
  The coupling \ECapra\ belongs to the class of one-sided linear couplings,
  that we introduce; we show that they induce conjugacies 
  that share nice properties with the classic Fenchel conjugacy.
  For the \ECapra\ conjugacy, induced by the coupling \ECapra,
  we relate the \ECapra\ conjugate and biconjugate 
  of the \lzeropseudonorm, 
  the characteristic functions of its level sets
  and the sequence of so-called top-$k$ norms.
  In particular,   
  we prove that the \lzeropseudonorm\ is equal to its biconjugate:
  hence, the \lzeropseudonorm\ is \ECapra-convex 
  in the sense of generalized convexity.
  As a corollary, we show that there exists 
  a proper convex lower semicontinuous function on~$\RR^d$
  such that this function and the \lzeropseudonorm\ coincide
  on the Euclidian unit sphere.
  This hidden convexity property 
  is somewhat surprising as the \lzeropseudonorm\ is a highly
  nonconvex function of combinatorial nature.
  We provide different expressions for this proper convex lower semicontinuous function,
  and we give explicit formulas in the two-dimensional case. 
\end{abstract}

{{\bf Keywords}: \lzeropseudonorm, coupling, Fenchel-Moreau conjugacy,
  top-$k$ norms, $k$-support norms, hidden convexity.}

\section{Introduction}

The \emph{counting function}, also called \emph{cardinality function}
or \emph{\lzeropseudonorm}, 
counts the number of nonzero components of a vector in~$\RR^d$.
It is related to the rank function defined over matrices
\cite{Hiriart-Urruty-Le:2013}.
It is well-known that the \lzeropseudonorm\ is 
lower semi continuous (lsc) but is not convex,
and that the Fenchel conjugacy fails to provide relevant analysis.
Indeed, the Fenchel biconjugate of the characteristic function of the level sets of the 
\lzeropseudonorm\ is zero, and 
the Fenchel biconjugate of the \lzeropseudonorm\ is also zero.

In this paper, we display a suitable conjugacy 
for which we prove that the \lzeropseudonorm\ is ``convex''
in the sense of generalized convexity, that is, is equal to its biconjugate. 
As a corollary, we also show that the \lzeropseudonorm\ function displays hidden convexity in the 
following sense\footnote{%
  In \cite{BenTal-Ben-Teboulle:1996}, the vocable ``hidden convexity'' refers to
  \emph{optimization problems} (when an original problem is equivalent to a
  \emph{convex optimization problem}). Here, the vocable ``hidden convexity''
  refers to \emph{functions} (when a function is the composition of a \emph{convex
    function} with a mapping). \label{ft:hidden_convexity}}: 
the \lzeropseudonorm\ is equal to the composition of a  
proper convex lower semicontinuous function on~$\RR^d$
with the normalization mapping from~$\RR^d$ to the Euclidian unit sphere.
\smallskip

The paper is organized as follows.
In Sect.~\ref{sec:One-sided_linear_couplings},
we provide background on Fenchel-Moreau conjugacies,
then we introduce a novel class of \emph{one-sided linear couplings},
which includes the 
\emph{Euclidian constant along primal rays coupling~$\CouplingCapra$ (\ECapra)}.
We show that one-sided linear couplings induce conjugacies that share nice properties
with the classic Fenchel conjugacy, by giving expressions for 
conjugate and biconjugate functions.
We also elucidate the structure of \ECapra-convex functions.
Then, 
in Sect.~\ref{CAPRAC_conjugates_and_biconjugates_related_to_the_pseudo_norm},
we relate the \ECapra\ conjugate and biconjugate 
of the \lzeropseudonorm, 
the characteristic functions of its level sets
and the top-$k$ norms.
In particular, we show that the \lzeropseudonorm\ is 
\ECapra\ biconjugate (that is, a \ECapra-convex function).
In Sect.~\ref{Hidden_convexity_in_the_pseudonorm}, we deduce
that the \lzeropseudonorm\ coincides, on the Euclidian unit sphere, with a proper convex lsc function
\( {\cal L}_0 \) defined on~$\RR^d$. 
We provide various expression for the function~\( {\cal L}_0 \).
The Appendix~\ref{Appendix} gathers 
properties of top-$k$ norms and of $k$-support norms, 
properties of the \lzeropseudonorm\ level sets,
and technical results on the function~\( {\cal L}_0 \).

\section{One-sided linear couplings}
\label{sec:One-sided_linear_couplings}

After having recalled background on Fenchel-Moreau conjugacies
in~\S\ref{Background_on_Fenchel-Moreau_conjugacies},
we introduce \emph{one-sided linear couplings}
in~\S\ref{One-sided_linear_couplings}.
\smallskip

When we manipulate functions with values 
in~$\barRR = [-\infty,+\infty] $,
we adopt the Moreau \emph{lower addition} \cite{Moreau:1970} 
that extends the usual addition with 
\( \np{+\infty} \LowPlus \np{-\infty} = \np{-\infty} \LowPlus \np{+\infty} =
-\infty \).
Let \( \UNCERTAIN \) be a set.
For any function \( \fonctionuncertain : \UNCERTAIN \to \barRR \),
its \emph{epigraph} is \( \epigraph\,\fonctionuncertain= 
\defset{ \np{\uncertain,t}\in\UNCERTAIN\times\RR}%
{\fonctionuncertain\np{\uncertain} \leq t} \),
its \emph{effective domain} is 
\( \dom\,\fonctionuncertain= 
\defset{\uncertain\in\UNCERTAIN}{ \fonctionuncertain\np{\uncertain} <+\infty}
\).
A function \( \fonctionuncertain: \UNCERTAIN \to \barRR \)
is said to be \emph{proper} if it never takes the value~$-\infty$
and if \( \dom\,\fonctionuncertain \not = \emptyset \).
When \( \UNCERTAIN \) is equipped with a topology,
the function \( \fonctionuncertain: \UNCERTAIN \to \barRR \)
is said to be \emph{lower semi continuous (lsc)}
if its epigraph is a closed subset of~\( \UNCERTAIN \times \RR \).

\subsection{Background on Fenchel-Moreau conjugacies}
\label{Background_on_Fenchel-Moreau_conjugacies}

We review concepts and notations related to the Fenchel conjugacy
(we refer the reader to \cite{Rockafellar:1974}),
then present how they are extended to general conjugacies
\cite{Singer:1997,Rubinov:2000,Martinez-Legaz:2005}.

\subsubsection*{The Fenchel conjugacy}

Let $\PRIMAL$ and $\DUAL$ be two (real) vector spaces that 
are \emph{paired} in the following sense \cite[p.~13]{Rockafellar:1974}:
there exists a  bilinear form 
\( \proscal{}{} : \PRIMAL \times \DUAL \to \RR \)
and locally convex topologies that are compatible 
in the sense
that the continuous linear forms on~$\PRIMAL$
are the functions 
\( \primal \in \PRIMAL \mapsto \proscal{\primal}{\dual} \),
for all \( \dual \in \DUAL \),
and 
that the continuous linear forms on~$\DUAL$
are the functions 
\( \dual \in \DUAL \mapsto \proscal{\primal}{\dual} \),
for all \( \primal \in \PRIMAL \).
The classic \emph{Fenchel conjugacy}~$\star$ is defined, 
for any functions \( \fonctionprimal : \PRIMAL  \to \barRR \)
and \( \fonctiondual : \DUAL \to \barRR \), by\footnote{%
  In convex analysis, one does not use the notation~\( \LFMr{} \),
  but simply~\( \LFM{} \). We use~\( \LFMr{} \)
  to be consistent with the notation~\eqref{eq:Fenchel-Moreau_reverse_conjugate} for general conjugacies.}
\begin{subequations}
  \begin{align}
    \LFM{\fonctionprimal}\np{\dual} 
    &= 
      \sup_{\primal \in \PRIMAL} \Bp{ \proscal{\primal}{\dual} 
      \LowPlus \bp{ -\fonctionprimal\np{\primal} } } 
      \eqsepv \forall \dual \in \DUAL
      \eqfinv
      \label{eq:Fenchel_conjugate}
    \\
    \LFMr{\fonctiondual}\np{\primal} 
    &= 
      \sup_{ \dual \in \DUAL } \Bp{ \proscal{\primal}{\dual} 
      \LowPlus \bp{ -\fonctiondual\np{\dual} } } 
      \eqsepv \forall \primal \in \PRIMAL
      \eqfinv
      \label{eq:Fenchel_conjugate_reverse}
    \\
    \LFMbi{\fonctionprimal}\np{\primal} 
    &= 
      \sup_{\dual \in \DUAL} \Bp{ \proscal{\primal}{\dual} 
      \LowPlus \bp{ -\LFM{\fonctionprimal}\np{\dual} } } 
      \eqsepv \forall \primal \in \PRIMAL
      \eqfinp
      \label{eq:Fenchel_biconjugate}
  \end{align}
\end{subequations}
Recall that a function is said to be \emph{convex} if its
epigraph is a convex subset of $\PRIMAL \times \RR$.
Recall that a function is said to be \emph{closed} if it is either lsc
and nowhere having the value $-\infty$,
or is the constant function~$-\infty$
\cite[p.~15]{Rockafellar:1974}.
It is proved that the Fenchel conjugacy induces a one-to-one correspondence
between the closed convex functions on~$\PRIMAL$
and the closed convex functions on~$\DUAL$
\cite[Theorem~5]{Rockafellar:1974}.
Closed convex functions are the two constant functions~$-\infty$ and~$+\infty$
united with all proper convex lsc functions.\footnote{%
In particular, any closed convex function that takes at least one finite value
is necessarily proper convex~lsc. \label{ft:closed_convex_function}}

\subsubsection*{The general case}

Let be given two sets $\PRIMAL$ (``primal''), $\DUAL$ (``dual''),
not necessarily vector spaces, together 
with a \emph{coupling} function
\begin{equation}
  \coupling : \PRIMAL \times \DUAL \to \barRR 
  \eqfinp 
\end{equation}
With any coupling, one associates \emph{conjugacies} 
from the set \( \barRR^\PRIMAL \) of functions \( \PRIMAL  \to \barRR \)
to the set \( \barRR^\DUAL \)  of functions \( \DUAL  \to \barRR \),
and from \( \barRR^\DUAL \) to \( \barRR^\PRIMAL \) 
as follows.

\begin{subequations}
  \begin{definition}
    The \emph{$\coupling$-Fenchel-Moreau conjugate} of a 
    function \( \fonctionprimal : \PRIMAL  \to \barRR \), 
    with respect to the coupling~$\coupling$, is
    the function \( \SFM{\fonctionprimal}{\coupling} : \DUAL  \to \barRR \) 
    defined by
    \begin{equation}
      \SFM{\fonctionprimal}{\coupling}\np{\dual} = 
      \sup_{\primal \in \PRIMAL} \Bp{ \coupling\np{\primal,\dual} 
        \LowPlus \bp{ -\fonctionprimal\np{\primal} } } 
      \eqsepv \forall \dual \in \DUAL
      \eqfinp
      \label{eq:Fenchel-Moreau_conjugate}
    \end{equation}
    With the coupling $\coupling$, we associate 
    the \emph{reverse coupling~$\coupling'$} defined by 
    \begin{equation}
      \coupling': \DUAL \times \PRIMAL \to \barRR 
      \eqsepv
      \coupling'\np{\dual,\primal}= \coupling\np{\primal,\dual} 
      \eqsepv
      \forall \np{\dual,\primal} \in \DUAL \times \PRIMAL
      \eqfinp
      \label{eq:reverse_coupling}
    \end{equation}
    The \emph{$\coupling'$-Fenchel-Moreau conjugate} of a 
    function \( \fonctiondual : \DUAL \to \barRR \), 
    with respect to the coupling~$\coupling'$, is
    the function \( \SFM{\fonctiondual}{\coupling'} : \PRIMAL \to \barRR \) 
    defined by
    \begin{equation}
      \SFM{\fonctiondual}{\coupling'}\np{\primal} = 
      \sup_{ \dual \in \DUAL } \Bp{ \coupling\np{\primal,\dual} 
        \LowPlus \bp{ -\fonctiondual\np{\dual} } } 
      \eqsepv \forall \primal \in \PRIMAL 
      \eqfinp
      \label{eq:Fenchel-Moreau_reverse_conjugate}
    \end{equation}
    The \emph{$\coupling$-Fenchel-Moreau biconjugate} of a 
    function \( \fonctionprimal : \PRIMAL  \to \barRR \), 
    with respect to the coupling~$\coupling$, is
    the function \( \SFMbi{\fonctionprimal}{\coupling} : \PRIMAL \to \barRR \) 
    defined by
    \begin{equation}
      \SFMbi{\fonctionprimal}{\coupling}\np{\primal} = 
      \bp{\SFM{\fonctionprimal}{\coupling}}^{\coupling'} \np{\primal} = 
      \sup_{ \dual \in \DUAL } \Bp{ \coupling\np{\primal,\dual} 
        \LowPlus \bp{ -\SFM{\fonctionprimal}{\coupling}\np{\dual} } } 
      \eqsepv \forall \primal \in \PRIMAL 
      \eqfinp
      \label{eq:Fenchel-Moreau_biconjugate}
    \end{equation}
  \end{definition}
\end{subequations}
The biconjugate of a 
function \( \fonctionprimal : \PRIMAL  \to \barRR \) satisfies
\begin{equation}
  \SFMbi{\fonctionprimal}{\coupling}\np{\primal}
  \leq \fonctionprimal\np{\primal}
  \eqsepv \forall \primal \in \PRIMAL 
  \eqfinp
  \label{eq:galois-cor}
\end{equation}
With the notion of $\coupling$-biconjugate, 
the classic notion of convex function is generalized.
\begin{definition}
  \label{de:coupling-convex_function}
  A function \( \fonctionprimal : \PRIMAL \to \barRR \) 
  is said to be \emph{$\coupling$-convex} it is equal to its
  $\coupling$-biconjugate:
  \begin{equation}
    \fonctionprimal \textrm{ is } \coupling\textrm{-convex }
    \iff 
    \SFMbi{\fonctionprimal}{\coupling}=\fonctionprimal 
    \eqfinp
  \end{equation}
\end{definition}
In generalized convexity, it is established that 
$\coupling$-convex functions 
are all functions of the form 
$\SFM{ \fonctiondual }{\coupling'}$, 
for any \( \fonctiondual : \DUAL \to \barRR \),
or, equivalently,
all functions of the form $\SFMbi{\fonctionprimal}{\coupling}$, 
for any \( \fonctionprimal : \PRIMAL \to \barRR \)
\cite{Singer:1997,Rubinov:2000,Martinez-Legaz:2005}.
As an illustration, the $\star$-convex functions are the closed convex functions
since, as recalled above, the Fenchel conjugacy induces a one-to-one correspondence
between the closed convex functions on~$\PRIMAL$
and the closed convex functions on~$\DUAL$.

\subsection{One-sided linear couplings}
\label{One-sided_linear_couplings}

Now, we introduce  one-sided linear couplings,
and we show that they induce conjugacies that share nice properties
with the classic Fenchel conjugacy.
In what follows, we let $\PRIMAL$ and $\DUAL$ be two paired vector spaces,
$\UNCERTAIN$ be a set and
\( \theta: \UNCERTAIN \to \PRIMAL \) be a mapping.

\begin{definition}
  We define the \emph{one-sided linear coupling} $\coupling_{\theta}$
  between the set~$\UNCERTAIN$ and the vector space~$\DUAL$ by\footnote{%
    In a one-sided linear coupling, 
    the second set~$\DUAL$ possesses a linear structure 
    (and is even paired with a vector space by means of a bilinear form),
    whereas the first set~$\UNCERTAIN$ is not required to carry any structure.}
  \begin{equation}
    \coupling_{\theta}: \UNCERTAIN \times \DUAL \to \barRR 
    \eqsepv 
    \coupling_{\theta}\np{\uncertain, \dual} = 
    \proscal{\theta\np{\uncertain}}{\dual} 
    \eqsepv \forall \np{\uncertain,\dual}  \in \UNCERTAIN\times\DUAL
    \eqfinp 
    \label{eq:one-sided_linear_coupling}
  \end{equation}
  \label{de:one-sided_linear_coupling}
\end{definition}
For any subset \( \Uncertain \subset \UNCERTAIN \),
$\delta_{\Uncertain} : \UNCERTAIN \to \barRR $ denotes the \emph{characteristic function} of the
set~$\Uncertain$:
\begin{equation}
  \delta_{\Uncertain}\np{\uncertain} = 0 \text{ if } \uncertain \in \Uncertain
  \eqsepv
  \delta_{\Uncertain}\np{\uncertain} = +\infty \text{ if } \uncertain \not\in \Uncertain 
  \eqfinp 
  \label{eq:characteristic_function}
\end{equation}
For any subset \( \Primal\subset\PRIMAL \),
\( \sigma_{\Primal} : \DUAL \to \barRR\) denotes the 
\emph{support function of the subset~$\Primal$}:
\begin{equation}
  \sigma_{\Primal}\np{\dual} = 
  \sup_{\primal\in\Primal} \proscal{\primal}{\dual}
  \eqsepv \forall \dual \in \DUAL
  \eqfinp
  \label{eq:support_function}
\end{equation}
Now, we turn to the $\coupling_{\theta}$-conjugacy induced by the
coupling~$\coupling_{\theta}$. 
For this purpose, we introduce the notion of conditional infimum.

\begin{definition}
  Let \( \fonctionuncertain : \UNCERTAIN \to \barRR \) be a function.
  We define the \emph{conditional infimum}
  (of the function~$\fonctionuncertain$ knowing the mapping~$\theta$) 
  as the function \( \ConditionalInfimum{\theta}{\fonctionuncertain}:
  \PRIMAL \to \barRR \) given by 
  \begin{equation}
    \bp{\ConditionalInfimum{\theta}{\fonctionuncertain}}\np{\primal}
    =
    \inf\defset{\fonctionuncertain\np{\uncertain}}{%
      \uncertain\in\UNCERTAIN \eqsepv \theta\np{\uncertain}=\primal}
    \eqsepv \forall \primal \in \PRIMAL 
    \eqfinp
    \label{eq:ConditionalInfimum}
  \end{equation}  
\end{definition}
If \( \primal \not\in \theta\np{\UNCERTAIN} \), we get that 
\(  \bp{\InfimalPostComposition{\theta}{\fonctionuncertain}}\np{\primal}
=+\infty \) by the convention \( \inf \emptyset = +\infty \).
Therefore, regarding effective domains, we have the inclusion 
$\dom\bp{\InfimalPostComposition{\theta}{\fonctionuncertain}} \subset
\theta\np{\UNCERTAIN}$.
The notation \( \ConditionalInfimum{\theta}{\fonctionuncertain} \)
comes from the analogy with a conditional expectation,
and the expression ``conditional infimum'' is taken from \cite{Witsenhausen:1975b}.
The conditional infimum is also called 
epi-composition in \cite[p.~27]{Rockafellar-Wets:1998}
and
infimal postcomposition in \cite[p.~214]{Bauschke-Combettes:2017}.
\smallskip

Here are expressions for the $\coupling_{\theta}$-conjugates and $\coupling_{\theta}$-biconjugates of a function. 
\begin{subequations}
  \begin{proposition}
    \label{pr:one-sided_linear_Fenchel-Moreau_conjugate}
    For any function \( \fonctiondual : \DUAL \to \barRR \), 
    the $\coupling_{\theta}'$-Fenchel-Moreau conjugate
\( \SFMr{\fonctiondual}{\coupling_{\theta}} : \UNCERTAIN \to \barRR \) 
    is given by 
    \begin{equation}
      \SFMr{\fonctiondual}{\coupling_{\theta}}=
      \LFMr{ \fonctiondual } \circ \theta
      \eqfinp
      \label{eq:one-sided_linear_c'-Fenchel-Moreau_conjugate}
    \end{equation}
    For any function \( \fonctionuncertain : \UNCERTAIN \to \barRR \), 
    the $\coupling_{\theta}$-Fenchel-Moreau conjugate
    \( \SFM{\fonctionuncertain}{\coupling_{\theta}}
 : \DUAL \to \barRR \) is given by 
    \begin{equation}
      \SFM{\fonctionuncertain}{\coupling_{\theta}}=
      \LFM{ \bp{\ConditionalInfimum{\theta}{\fonctionuncertain}} }
      \eqfinv 
      \label{eq:one-sided_linear_Fenchel-Moreau_conjugate}
    \end{equation}
    and the $\coupling_{\theta}$-Fenchel-Moreau biconjugate
    \( \SFMbi{\fonctionuncertain}{\coupling_{\theta}}
    : \UNCERTAIN \to \barRR \)     is given by
    \begin{equation}
      \SFMbi{\fonctionuncertain}{\coupling_{\theta}}
      = 
      \LFMr{ \bp{ \SFM{\fonctionuncertain}{\coupling_{\theta}} } }
      \circ \theta
      =
      \LFMbi{ \bp{\ConditionalInfimum{\theta}{\fonctionuncertain}} }
      \circ \theta
      \eqfinp
      \label{eq:one-sided_linear_Fenchel-Moreau_biconjugate}
    \end{equation}
    For any subset \( \Uncertain \subset \UNCERTAIN \), we have 
    \begin{equation}
      \SFM{ \delta_{\Uncertain} }{\coupling_{\theta}}
      = \sigma_{ \theta\np{\Uncertain} } 
      \eqfinp
      \label{eq:one-sided_linear_Fenchel-Moreau_characteristic}
    \end{equation}
  \end{proposition}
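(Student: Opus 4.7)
The plan is to derive each of the four identities by direct substitution of the definition $\coupling_{\theta}\np{\uncertain,\dual}=\proscal{\theta\np{\uncertain}}{\dual}$ into the Fenchel--Moreau formulas recalled in~\S\ref{Background_on_Fenchel-Moreau_conjugacies}, then reorganizing the (iterated) suprema so that the inner operation is recognized as a classical Fenchel conjugate on the paired pair~$\PRIMAL \times \DUAL$. Since $\coupling_{\theta}$ factors through~$\theta$, everything should collapse to Fenchel conjugacy of a function on~$\PRIMAL$, combined with the infimal postcomposition~\eqref{eq:InfimalPostcomposition}.

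For~\eqref{eq:one-sided_linear_c'-Fenchel-Moreau_conjugate}, I would simply plug $\coupling_{\theta}$ into~\eqref{eq:Fenchel-Moreau_reverse_conjugate}: the expression inside the supremum depends on~$\uncertain$ only through $\theta\np{\uncertain}$, and the supremum over $\dual \in \DUAL$ is by definition $\LFMr{\fonctiondual}\bp{\theta\np{\uncertain}}$. For~\eqref{eq:one-sided_linear_Fenchel-Moreau_conjugate}, the key move is to slice the supremum in~$\uncertain$ by the level sets of~$\theta$: write
\(\sup_{\uncertain \in \UNCERTAIN} = \sup_{\primal \in \theta\np{\UNCERTAIN}} \sup_{\uncertain:\,\theta\np{\uncertain}=\primal}\),
replace $\proscal{\theta\np{\uncertain}}{\dual}$ by $\proscal{\primal}{\dual}$ on each slice, push the inner supremum through the Moreau lower addition onto the term $-\fonctionuncertain\np{\uncertain}$, and recognize $\inf\defset{\fonctionuncertain\np{\uncertain}}{\theta\np{\uncertain}=\primal}$ as $\bp{\InfimalPostcomposition{\theta}{\fonctionuncertain}}\np{\primal}$. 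Extending the outer supremum from~$\theta\np{\UNCERTAIN}$ to the whole~$\PRIMAL$ is free, because $\InfimalPostcomposition{\theta}{\fonctionuncertain}$ equals $+\infty$ off~$\theta\np{\UNCERTAIN}$, and what remains is exactly $\LFM{\bp{\InfimalPostcomposition{\theta}{\fonctionuncertain}}}\np{\dual}$. Identity~\eqref{eq:one-sided_linear_Fenchel-Moreau_biconjugate} then follows at once by applying the already proved~\eqref{eq:one-sided_linear_c'-Fenchel-Moreau_conjugate} to $\fonctiondual = \SFM{\fonctionuncertain}{\coupling_{\theta}}= \LFM{\bp{\InfimalPostcomposition{\theta}{\fonctionuncertain}}}$.

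Finally, for~\eqref{eq:one-sided_linear_Fenchel-Moreau_characteristic}, I would expand~\eqref{eq:minus_Fenchel-Moreau_conjugate} with $\fonctionprimalbis=\delta_{\Uncertain}$: the term $-\delta_{\Uncertain}\np{\uncertain}$ equals~$0$ on~$\Uncertain$ and $-\infty$ elsewhere, so the supremum trivially restricts to~$\Uncertain$ and becomes $\sup_{\uncertain\in\Uncertain}\proscal{-\theta\np{\uncertain}}{\dual}$, which is exactly the support function $\sigma_{-\theta\np{\Uncertain}}\np{\dual}$ of the image set $-\theta\np{\Uncertain} \subset \PRIMAL$. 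The main (and only) obstacle I anticipate is bookkeeping around J.~J.~Moreau's lower addition~$\LowPlus$ whenever $\fonctionuncertain$ or the slice infimum attains $\pm\infty$; this is precisely what the distributivity of $\sup$ and $\inf$ through~$\LowPlus$ (recorded in Appendix~\ref{Appendix}) is designed to handle, so the slice manipulations in the argument for~\eqref{eq:one-sided_linear_Fenchel-Moreau_conjugate} should go through without any genuine analytic subtlety.
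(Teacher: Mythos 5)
Your proposal is correct and follows essentially the same route as the paper: direct substitution of the coupling into the Fenchel--Moreau formulas, slicing the supremum over~$\uncertain$ by the level sets of~$\theta$ and using the distributivity of~$\sup$ through the Moreau lower addition to recognize the infimal postcomposition, and deducing the biconjugate formula by applying~\eqref{eq:one-sided_linear_c'-Fenchel-Moreau_conjugate} to the already computed conjugate. The only (immaterial) difference is in~\eqref{eq:one-sided_linear_Fenchel-Moreau_characteristic}, where you expand the $\np{-\coupling_{\theta}}$-conjugate of~$\delta_{\Uncertain}$ directly, while the paper routes it through~\eqref{eq:one-sided_linear_Fenchel-Moreau_conjugate} via the identity $\InfimalPostcomposition{\np{-\theta}}{\delta_{\Uncertain}}=\delta_{-\theta\np{\Uncertain}}$; both yield $\sigma_{-\theta\np{\Uncertain}}$.
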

\end{subequations}

\begin{proof}
  We prove~\eqref{eq:one-sided_linear_c'-Fenchel-Moreau_conjugate}.
  Letting \( \uncertain \in \UNCERTAIN  \), we have that 
  \begin{align*}
    \SFM{ \bp{ \fonctiondual } }{\coupling_{\theta}'}\np{\uncertain}
    &=
      \sup_{\dual \in \DUAL} 
      \Bp{ \proscal{\theta\np{\uncertain}}{\dual}
      \LowPlus \bp{ -
      \fonctiondual\np{\dual} } }
      \tag{by the conjugate formula~\eqref{eq:Fenchel-Moreau_conjugate} 
      and the coupling~\eqref{eq:one-sided_linear_coupling}}
    \\
    &=
      \LFMr{ \fonctiondual }\bp{\theta\np{\uncertain} } 
      \tag{by the expression~\eqref{eq:Fenchel_conjugate_reverse} of the Fenchel conjugate}
      \eqfinp
  \end{align*}
  \smallskip
  We prove~\eqref{eq:one-sided_linear_Fenchel-Moreau_conjugate}.
  Letting \( \dual \in \DUAL \), we have that 
  \begin{align*}
    \SFM{\fonctionuncertain}{\coupling_{\theta}}\np{\dual} 
    &=
      \sup_{\uncertain \in \UNCERTAIN }
      \Bp{ \proscal{\theta\np{\uncertain}}{\dual}
      \LowPlus \bp{ -\fonctionuncertain\np{\uncertain} } }
      \tag{by the conjugate formula~\eqref{eq:Fenchel-Moreau_conjugate} 
      and the coupling~\eqref{eq:one-sided_linear_coupling}}
    \\
    &=
      \sup_{\primal \in \PRIMAL }
      \sup_{\uncertain \in \UNCERTAIN, \theta\np{\uncertain}=\primal}
      \Bp{ \proscal{\theta\np{\uncertain}}{\dual}
      \LowPlus \bp{ -\fonctionuncertain\np{\uncertain} } }
    \\
    &=
      \sup_{\primal \in \PRIMAL }
      \sup_{\uncertain \in \UNCERTAIN, \theta\np{\uncertain}=\primal}
      \Bp{ \proscal{\primal}{\dual}
      \LowPlus \bp{ -\fonctionuncertain\np{\uncertain} } }
    \\
    &=
      \sup_{\primal \in \PRIMAL }
      \Bp{ \proscal{\primal}{\dual}
      \LowPlus
      \sup_{\uncertain \in \UNCERTAIN, \theta\np{\uncertain}=\primal}
      \bp{ -\fonctionuncertain\np{\uncertain} } }
      \tag{since \( \sup_{b \in \mathbb{B}}\np{a \LowPlus g(b)} =
      a \LowPlus \sup_{b \in \mathbb{B}}g(\uncertain)\) \cite{Moreau:1970}
      }
    \\
    &=
      \sup_{\primal \in \PRIMAL }
      \Bp{ \proscal{\primal}{\dual}
      \LowPlus  \bp{ - \inf_{\uncertain \in \UNCERTAIN, \theta\np{\uncertain}=\primal}
      \fonctionuncertain\np{\uncertain} } }
    \\
    &=
      \sup_{\primal \in \PRIMAL }
      \bgp{ \proscal{\primal}{\dual}
      \LowPlus  \Bp{ - 
      \bp{\ConditionalInfimum{\theta}{\fonctionuncertain}}\np{\primal} } }
      \tag{by the \infimalpostcomposition\
      expression~\eqref{eq:ConditionalInfimum}}
    \\
    &=
      \LFM{ \bp{\ConditionalInfimum{\theta}{\fonctionuncertain}} }\np{\dual}
      \tag{by the expression~\eqref{eq:Fenchel_conjugate} of the Fenchel conjugate}
      \eqfinp
  \end{align*}
  \smallskip
  We prove~\eqref{eq:one-sided_linear_Fenchel-Moreau_biconjugate}.
  Letting \( \uncertain \in \UNCERTAIN  \), we have that 
  \begin{align*}
    \SFMbi{\fonctionuncertain}{\coupling_{\theta}}\np{\uncertain} 
    &=
      \SFM{ \bp{ \SFM{\fonctionuncertain}{\coupling_{\theta}} } }{\coupling_{\theta}'}\np{\uncertain}
      \tag{by the definition~\eqref{eq:Fenchel-Moreau_biconjugate}
      of the biconjugate}
    \\
    &=
      \SFM{ \bp{ \LFM{ \bp{\ConditionalInfimum{\theta}{\fonctionuncertain}} } } }%
      {\coupling_{\theta}'}\np{\uncertain}
      \tag{by~\eqref{eq:one-sided_linear_Fenchel-Moreau_conjugate}}
    \\
    &=
      \LFMbi{ \bp{\ConditionalInfimum{\theta}{\fonctionuncertain}} }
      \bp{\theta\np{\uncertain} } 
      \tag{by~\eqref{eq:one-sided_linear_c'-Fenchel-Moreau_conjugate}}
      \eqfinp
  \end{align*}
  \smallskip
  We prove~\eqref{eq:one-sided_linear_Fenchel-Moreau_characteristic}:
  \begin{align*}
    \SFM{ \delta_{\Uncertain} }{\coupling_{\theta}}
    &= 
      \LFM{ \bp{\ConditionalInfimum{{\theta}}{ \delta_{\Uncertain} }} }
      \tag{ by~\eqref{eq:one-sided_linear_Fenchel-Moreau_conjugate} }
    \\
    &= 
      \LFM{ \delta_{ \theta\np{\Uncertain} } }
      \tag{ because \( \ConditionalInfimum{\theta}{\delta_{\Uncertain}} =
      \delta_{ \theta\np{\Uncertain} } \) by~\eqref{eq:ConditionalInfimum}
      and \eqref{eq:characteristic_function}}
    \\
    &= 
      \sigma_{ \theta\np{\Uncertain} } 
      \tag{ by~\eqref{eq:Fenchel_conjugate}, \eqref{eq:characteristic_function} 
      and \eqref{eq:support_function}}
      \eqfinp
  \end{align*}
  \smallskip

  This ends the proof.
\end{proof}

Now, we are able to characterize the so-called  $\coupling_{\theta}$-convex
functions (see Definition~\ref{de:coupling-convex_function}).
\begin{proposition}
  \label{pr:one-sided_linear_convex_functions}
  A function \( \fonctionuncertain : \UNCERTAIN \to \barRR \) is $\coupling_{\theta}$-convex 
  if and only if it is the composition of 
  a closed convex function \( \fonctionprimal: \PRIMAL \to \barRR \)
  with the mapping~\( \theta: \UNCERTAIN \to \PRIMAL \). 
  More precisely, for any function \( \fonctionuncertain : \UNCERTAIN \to \barRR \),
  we have the equivalences
  \begin{subequations}
    \begin{align}
      &  
        \fonctionuncertain \textrm{ is $\coupling_{\theta}$-convex }
        \label{eq:coupling_theta-convex_function_a}
      \\
      \iff & 
             \fonctionuncertain =
             \SFMbi{\fonctionuncertain}{\coupling_{\theta}}
             \label{eq:coupling_theta-convex_function_b}
      \\
      \iff & 
             \fonctionuncertain = 
             \LFMr{ \bp{ \SFM{\fonctionuncertain}{\coupling_{\theta}} } } 
             \circ \, \theta
             \textrm{ (where } \LFMr{ \bp{
             \SFM{\fonctionuncertain}{\coupling_{\theta}}}}: \PRIMAL  \to \barRR 
             \textrm{ is a closed convex function) }
             \label{eq:coupling_theta-convex_function_c}
      \\
      \iff & \textrm{there exists a closed convex function }
             \fonctionprimal: \PRIMAL \to \barRR 
             \textrm{ such that }
             \fonctionuncertain = \fonctionprimal \circ \theta
             \eqfinp
             \label{eq:coupling_theta-convex_function_d}
    \end{align}
  \end{subequations}
\end{proposition}

\begin{proof}
  The equivalence between \eqref{eq:coupling_theta-convex_function_a}
  and \eqref{eq:coupling_theta-convex_function_b} follows from 
  Definition~\ref{de:coupling-convex_function}.
  The equivalence between~\eqref{eq:coupling_theta-convex_function_b}
  and~\eqref{eq:coupling_theta-convex_function_c} follows
  from~\eqref{eq:one-sided_linear_Fenchel-Moreau_biconjugate};
  Moreover, the function \( \LFMr{ \bp{
      \SFM{\fonctionuncertain}{\coupling_{\theta}} } } \)
  is closed convex since, as recalled above, the Fenchel conjugacy induces a one-to-one correspondence
between the closed convex functions on~$\PRIMAL$
and the closed convex functions on~$\DUAL$.
  Obviously, \eqref{eq:coupling_theta-convex_function_c}
  implies \eqref{eq:coupling_theta-convex_function_d}.

  Finally, there remains to prove that
  \eqref{eq:coupling_theta-convex_function_d}
  implies~\eqref{eq:coupling_theta-convex_function_b}.
  If there exists a closed convex function 
  \( \fonctionprimal: \PRIMAL  \to \barRR \) such that
  \( \fonctionuncertain = \fonctionprimal \circ \theta \), 
  then \( \ConditionalInfimum{\theta}{\fonctionuncertain}
  = \fonctionprimal \UppPlus \delta_{ \theta\np{\UNCERTAIN} } \)
  as easily computed, 
  and therefore \( \SFMbi{\fonctionuncertain}{\coupling_{\theta}}
  = \LFMbi{ \bp{\ConditionalInfimum{\theta}{\fonctionuncertain}} } \circ \theta 
  = \LFMbi{ \bp{ \fonctionprimal \UppPlus \delta_{ \theta\np{\UNCERTAIN} } } } \circ \theta  \)
  by~\eqref{eq:one-sided_linear_Fenchel-Moreau_biconjugate}.
  Now, as \( \fonctionprimal \UppPlus \delta_{ \theta\np{\UNCERTAIN} } 
  \geq \fonctionprimal \) by~\eqref{eq:characteristic_function},
  we get that 
  \( \LFMbi{ \bp{ \fonctionprimal \UppPlus \delta_{ \theta\np{\UNCERTAIN} } } }
  \geq \LFMbi{ \fonctionprimal }= \fonctionprimal \),
  where the last equality holds because the  function 
  \( \fonctionprimal: \PRIMAL  \to \barRR \) is closed convex.
  As a consequence, we obtain that 
  \( \SFMbi{\fonctionuncertain}{\coupling_{\theta}}
  \geq \fonctionprimal \circ \theta = \fonctionuncertain \). 
  Now, by~\eqref{eq:galois-cor}, we always have the inequality
  \( \SFMbi{\fonctionuncertain}{\coupling_{\theta}}
  \leq \fonctionuncertain \).
  Thus, we conclude that \( \SFMbi{\fonctionuncertain}{\coupling_{\theta}}
  = \fonctionuncertain \).
  \smallskip

  This ends the proof. 
\end{proof}

Let us say that \emph{a function \( \fonctionuncertain : \UNCERTAIN \to \barRR \) 
  displays hidden convexity} with respect to the mapping~\( \theta:
\UNCERTAIN \to \PRIMAL \) if there exists 
a closed convex function \( \fonctionprimal: \PRIMAL \to \barRR \)
such that \( \fonctionuncertain = \fonctionprimal \circ \theta \).
Then, we have just proved that this notion of hidden convexity
for functions (see Footnote~\ref{ft:hidden_convexity})
coincides with the notion of $\coupling_{\theta}$-convex functions.

\section{The \ECapra\ conjugacy and the \lzeropseudonorm}
\label{CAPRAC_conjugates_and_biconjugates_related_to_the_pseudo_norm}

From now on, we work on the Euclidian space~$\RR^d$
(with $d \in \NN^*$), equipped with the scalar product 
\( \proscal{\cdot}{\cdot} \)
and with the Euclidian norm
\( \norm{\cdot} = \sqrt{ \proscal{\cdot}{\cdot} } \).
In particular, we consider the following \emph{Euclidian unit sphere}~$\SPHERE$ and
\emph{Euclidian unit ball}~$\BALL$:
\begin{equation}
  \SPHERE= \defset{\primal \in \RR^d}{\norm{\primal} = 1} 
  \mtext{ and }
  \BALL= \defset{\primal \in \RR^d}{\norm{\primal} \leq 1} 
  \eqfinp
  \label{eq:Euclidian_SPHERE}
\end{equation}
In~\S\ref{Euclidian Constant_along_primal_rays_coupling}, we introduce
the \emph{(Euclidian) constant along primal rays coupling~$\CouplingCapra$
  (\ECapra)}.
Then, we recall definitions of the \lzeropseudonorm, and of the top-$k$ and
$k$-support norms
in~\S\ref{Background_on_the_lzeropseudonorm_and_the_top-k_and_k-support_norms}. 
Finally, 
in~\S\ref{ECapra-conjugates_and_biconjugates_related_to_the_lzeropseudonorm},
we provide expressions for the \ECapra-conjugates and \ECapra-biconjugates 
of functions related to the \lzeropseudonorm.

\subsection{Euclidian Constant along primal rays coupling (\ECapra)}
\label{Euclidian Constant_along_primal_rays_coupling}

We introduce a novel coupling, which is a special case of
one-sided linear coupling.

\begin{definition}
  The \emph{\ECapra\ coupling}~$\CouplingCapra$ between $\RR^d$ and $\RR^d$ 
  is defined by
  \begin{equation}
    \forall \dual \in \RR^d \eqsepv 
    \begin{cases}
      \CouplingCapra\np{\primal, \dual} 
      &= \displaystyle
      \frac{ \proscal{\primal}{\dual} }{ \norm{\primal} }
      = \displaystyle
      \frac{ \proscal{\primal}{\dual} }{ \sqrt{ \proscal{\primal}{\primal} } } 
      \eqsepv \forall \primal \in \RR^d\backslash\{0\} \eqfinv
      \\[4mm]
      \CouplingCapra\np{0, \dual} &= 0.
    \end{cases}
    \label{eq:Euclidian_coupling_CAPRAC}
  \end{equation}
\end{definition}
The coupling \ECapra\ has the property of being 
\emph{constant along primal rays}, hence the acronym\footnote{%
  In fact, there is large class of couplings that are constant along primal rays.
  It suffices to replace the Euclidian norm
  in~\eqref{eq:Euclidian_coupling_CAPRAC}
  with any norm. Such couplings are studied in 
  \cite{Chancelier-DeLara:2020_CAPRA,Chancelier-DeLara:2020_Variational}.
  In this paper, we focus on the constant along primal rays coupling
  induced by the \emph{Euclidian} norm, hence the acronym~\ECapra.}~\ECapra\
(Euclidian Constant Along Primal RAys Coupling).
We introduce 
the primal \emph{normalization mapping}~$\normalized$ as follows:
\begin{equation}
  \normalized : \RR^d \to \SPHERE \cup \{0\} 
  \eqsepv
  \normalized\np{\primal}=
  \begin{cases}
    \frac{ \primal }{ \norm{\primal} }
    & \mtext{ if } \primal \neq 0 \eqfinv 
    \\
    0 
    & \mtext{ if } \primal = 0 \eqfinp
  \end{cases}  
  \label{eq:normalization_mapping}
\end{equation}
With these notations, the coupling~\ECapra\ 
in~\eqref{eq:Euclidian_coupling_CAPRAC} is a special case
of one-sided linear coupling  (see
Definition~\ref{de:one-sided_linear_coupling}):
\( \CouplingCapra = \coupling_{\normalized} \), 
as in~\eqref{eq:one-sided_linear_coupling} with \(\theta=\normalized\),
is the \emph{Fenchel coupling after primal normalization}.
The following Proposition --- that provides
expressions for the \ECapra-conjugates and \ECapra-biconjugates
of a function --- simply is
Proposition~\ref{pr:one-sided_linear_Fenchel-Moreau_conjugate}
in the case where the mapping \(\theta \) is the
normalization mapping~\(\normalized\) in~\eqref{eq:normalization_mapping}. 

\begin{subequations}
  \begin{proposition}
    \label{pr:CAPRA_Fenchel-Moreau_conjugate}
    For any function \( \fonctiondual : \RR^d \to \barRR \), 
    the $\CouplingCapra'$-Fenchel-Moreau conjugate
\( \SFMr{\fonctiondual}{\CouplingCapra}: \RR^d \to \barRR \)
    is given by 
    \begin{equation}
      \SFMr{\fonctiondual}{\CouplingCapra}=
      \LFM{ \fonctiondual } \circ \normalized
      \eqfinp 
      \label{eq:CAPRA'_Fenchel-Moreau_conjugate}
    \end{equation}
    For any function \( \fonctionprimal : \RR^d \to \barRR \), 
    the $\CouplingCapra$-Fenchel-Moreau conjugate
    \( \SFM{\fonctionprimal}{\CouplingCapra}: \RR^d \to \barRR \)
    is given by 
    \begin{equation}
      \SFM{\fonctionprimal}{\CouplingCapra}=
      \LFM{ \bp{\ConditionalInfimum{\normalized}{\fonctionprimal}} }
      \eqfinv 
      \label{eq:CAPRA_Fenchel-Moreau_conjugate}
    \end{equation}
    where the \infimalpostcomposition~\eqref{eq:ConditionalInfimum}
    has the expression
    \begin{equation}
      \bp{\ConditionalInfimum{\normalized}{\fonctionprimal}}\np{\primal}
      =
      \inf\defset{\fonctionprimal\np{\primal'}}{
        \normalized\np{\primal'}=\primal}
      =
      \begin{cases}
        \inf_{\lambda > 0} \fonctionprimal\np{\lambda\primal}
        & \text{if } \primal \in \SPHERE  \cup \{0\}
        \eqfinv
        \\
        +\infty  
        & \text{if } \primal \not\in \SPHERE \cup \{0\}
        \eqfinv
      \end{cases}
      \label{eq:CAPRA_ConditionalInfimum}
    \end{equation}
    and the $\CouplingCapra$-Fenchel-Moreau biconjugate
    \( \SFMbi{\fonctionprimal}{\CouplingCapra}: \RR^d \to \barRR \)
    is given by
    \begin{equation}
      \SFMbi{\fonctionprimal}{\CouplingCapra}
      = 
      \LFMr{ \bp{ \SFM{\fonctionprimal}{\CouplingCapra} } }
      \circ \normalized
      =
      \LFMbi{ \bp{\ConditionalInfimum{\normalized}{\fonctionprimal}} }
      \circ \normalized
      \eqfinp
      \label{eq:CAPRA_Fenchel-Moreau_biconjugate}
    \end{equation}
  \end{proposition}
\end{subequations}

Thanks to Proposition~\ref{pr:one-sided_linear_convex_functions}, 
we easily deduce the following result.

\begin{proposition}
  \label{pr:CAPRA_convex_functions}  
  A function on~$\RR^d$ is $\CouplingCapra$-convex 
  if and only if it is the composition of 
  a closed convex function on~$\RR^d$
  with the normalization mapping~\eqref{eq:normalization_mapping}.
  More precisely, for any function \( \fonctionuncertain : \RR^d \to \barRR \),
  we have the equivalences
  \begin{subequations}
    \begin{align*}
      &      \fonctionuncertain \textrm{ is } \CouplingCapra\textrm{-convex }
      \\
      \iff &
             \fonctionuncertain =
             \SFMbi{\fonctionuncertain}{\CouplingCapra}
      \\
      \iff & 
             \fonctionuncertain = \LFMr{ \bp{ \SFM{\fonctionuncertain}{\CouplingCapra} } } 
             \circ \, \normalized
             \textrm{ (where } \LFMr{ \bp{ \SFM{\fonctionuncertain}{\CouplingCapra} } } : \RR^d  \to \barRR 
             \textrm{ is a closed convex function) }
      \\
      \iff & \textrm{there exists a
             closed convex function }
             \fonctionprimal: \RR^d  \to \barRR 
             \textrm{ such that }
             \fonctionuncertain = \fonctionprimal \circ \normalized
             \eqfinp
    \end{align*}
  \end{subequations}
\end{proposition}

Now, we turn to analyze the \lzeropseudonorm\ by means of the \ECapra\
conjugacy. 

\subsection{The \lzeropseudonorm, and the top-$k$ and $k$-support norms}
\label{Background_on_the_lzeropseudonorm_and_the_top-k_and_k-support_norms}

We recall definitions of the so-called \lzeropseudonorm, and of the top-$k$ and
$k$-support norms.

\subsubsubsection{The \lzeropseudonorm}

The \emph{\lzeropseudonorm} is the function
\( \lzero : \RR^d \to \na{0,1,\ldots,d} \)
defined~by 
\begin{equation}
  \lzero\np{\primal} =
  \bcardinal{ \bset{ j \in \na{1,\ldots,d} }{\primal_j \not= 0 } }
  \eqsepv \forall \primal \in \RR^d
  \eqfinv
  \label{eq:pseudo_norm_l0}  
\end{equation}
where $\cardinal{K}$ denotes the cardinal of 
a subset \( K \subset \na{1,\ldots,d} \). 
The \lzeropseudonorm\ shares three out of the four axioms of a norm:
nonnegativity, positivity except for \( \primal =0 \), subadditivity.
The axiom of 1-homogeneity does not hold true;
by contrast, the \lzeropseudonorm\ is 0-homogeneous as 
\( \lzero\np{\rho\primal} = \lzero\np{\primal} \),
\( \forall \rho \in \RR\backslash\{0\} \), \( \forall \primal \in \RR^d \).
Thus, the \lzeropseudonorm\ displays the invariance property
\begin{equation}
  \lzero \circ \normalized = \lzero   
  \label{eq:pseudonormlzero_invariance_normalized}
\end{equation}
with respect to the normalization mapping~\eqref{eq:normalization_mapping}.
This property will be instrumental to 
show that the \lzeropseudonorm\ 
is a $\CouplingCapra$-convex function.

\subsubsubsection{The level sets of the \lzeropseudonorm}

The \lzeropseudonorm\ is used in exact sparse optimization problems of the form
\( \inf_{ \lzero\np{\primal} \leq k } \fonctionprimal\np{\primal} \).
Thus, we introduce the \emph{level sets}
\begin{subequations}
  \begin{align}
    \LevelSet{\lzero}{k} 
    &= 
      \defset{ \primal \in \RR^d }{ \lzero\np{\primal} \leq k }
      \eqsepv \forall k \in \ba{0,1,\ldots,d} 
      \eqfinv
      \label{eq:pseudonormlzero_level_set}
      \intertext{and the \emph{level curves}}
    \LevelCurve{\lzero}{k} 
    &= 
      \defset{ \primal \in \RR^d }{ \lzero\np{\primal} = k }
      \eqsepv \forall k \in \ba{0,1,\ldots,d} 
      \eqfinp
      \label{eq:pseudonormlzero_level_curve}
  \end{align}
\end{subequations}

For any subset \( K \subset \na{1,\ldots,d} \), 
we denote the subspace of~$\RR^d$ made of vectors
whose components vanish outside of~$K$ by\footnote{%
  Here, following notation from Game Theory, 
  we have denoted by $-K$ the complementary subset 
  of~$K$ in \( \na{1,\ldots,d} \): \( K \cup (-K) = \na{1,\ldots,d} \)
  and \( K \cap (-K) = \emptyset \).}
\begin{equation}
  \FlatRR_{K} = \RR^K \times \{0\}^{-K} =
  \bset{ \primal \in \RR^d }{ \primal_j=0 \eqsepv \forall j \not\in K } 
  \subset \RR^d 
  \eqfinv
  \label{eq:FlatRR}
\end{equation}
where \( \FlatRR_{\emptyset}=\{0\} \).
For any \( \primal \in \RR^d \) and \( K \subset \ba{1,\ldots,d} \), 
we denote by
\( \primal_K \in \RR^d \) the vector which coincides with~\( \primal \),
except for the components outside of~$K$ that vanish:
\( \primal_K \) is the orthogonal projection of~\( \primal \) onto
the subspace~\( \FlatRR_{K} \).
The level sets of the \lzeropseudonorm\
in~\eqref{eq:pseudonormlzero_level_set}
are easily related to the subspaces~\( \FlatRR_{K} \) of~\( \RR^d \),
as defined in~\eqref{eq:FlatRR}, by\footnote{%
  The notation \( \bigcup_{\cardinal{K} \leq k} \)
  is a shorthand for 
  \( \bigcup_{ { K \subset \na{1,\ldots,d}, \cardinal{K} \leq k}} \)
  (and the same for \( \bigcup_{\cardinal{K} = k} \)).}  
\begin{equation}
  \LevelSet{\lzero}{k} 
  = 
  \bset{ \primal \in \RR^d }{ \lzero\np{\primal} \leq k }
  = \bigcup_{\cardinal{K} \leq k} \FlatRR_{K} 
  \eqsepv \forall k=0,1,\ldots,d 
  \eqfinp
  \label{eq:level_set_pseudonormlzero}
\end{equation}

\subsubsubsection{The top-$k$ and $k$-support norms}

\begin{definition}
  \label{de:symmetric_gauge_norm}
  For \( k \in \ba{1,\ldots,d} \), we define\footnote{%
    The notation  \( \sup_{\cardinal{K} \leq k} \) 
    is a shorthand for 
    \( \sup_{ { K \subset \na{1,\ldots,d}, \cardinal{K} \leq k}} \)
    (and the same for \( \sup_{\cardinal{K} = k} \)).
    The property that \( \sup_{\cardinal{K} \leq k}\norm{\primal_K} 
    = \sup_{\cardinal{K} = k}\norm{\primal_K} \) 
    in~\eqref{eq:symmetric_gauge_norm} 
    comes from the easy observation that 
    \( K\subset K' \Rightarrow \norm{\primal_K} \leq \norm{\primal_{K'}} \).}
  \begin{equation}
    \EuclidianTopNorm{k}{\primal}
    =
    \sup_{\cardinal{K} \leq k}\norm{\primal_K} 
    =
    \sup_{\cardinal{K} = k}\norm{\primal_K} 
    \eqsepv \forall \primal \in \RR^d 
    \eqfinp
    \label{eq:symmetric_gauge_norm}
  \end{equation}
  Thus defined, \( \EuclidianTopNorm{k}{\cdot} \) is a norm,
  the so-called \emph{top-$k$ norm}.
  Its dual norm, as in~\eqref{eq:dual_norm}, denoted by\footnote{%
    We use the symbol~$\star$ in the superscript to indicate that the 
    $k$-support norm  \( \EuclidianSupportNorm{k}{\cdot} \) is a dual norm.}
  \( \EuclidianSupportNorm{k}{\cdot} \), is called the \emph{$k$-support norm}
  \cite{Argyriou-Foygel-Srebro:2012}:
  \begin{equation}
    \EuclidianSupportNorm{k}{\cdot} = \bp{ \EuclidianTopNorm{k}{\cdot} }_{\star}
    \eqfinp
    \label{eq:support_norm}
  \end{equation}
\end{definition}
We follow the terminology of \cite{Tono-Takeda-Gotoh:2017},
where the top-$k$ norm is also called the top-$(k,1)$ norm.
Indeed, the norm of a vector is obtained
with a subvector of size~$k$ having the~$k$ largest components in module:
letting $\sigma$ be a permutation of \( \{1,\ldots,d\} \) such that
\( \module{ \primal_{\sigma(1)} } \geq \module{ \primal_{\sigma(2)} } 
\geq \cdots \geq \module{ \primal_{\sigma(d)} } \),
we have that 
\( \EuclidianTopNorm{k}{\primal} = 
\sqrt{ \sum_{l=1}^{k} \module{ \primal_{\sigma(l)} }^2 } \). 
The top-$k$ norm is also known as the \emph{$2$-$k$-symmetric gauge norm},
or \emph{Ky Fan vector norm}.

\subsection{\ECapra-conjugates and biconjugates of the \lzeropseudonorm}
\label{ECapra-conjugates_and_biconjugates_related_to_the_lzeropseudonorm}

With the Fenchel conjugacy, we calculate that 
\( \LFM{ \delta_{ \LevelSet{\lzero}{k} } }= \delta_{\{0\}} \) 
and \( \LFMbi{ \delta_{ \LevelSet{\lzero}{k} } }= 0 \),
for all $k=1,\ldots,d $,
and that 
\( \LFM{ \lzero }= \delta_{\{0\}} \) and
\( \LFMbi{ \lzero }= 0 \).
Hence, the Fenchel conjugacy is not suitable
to handle the \lzeropseudonorm. 
We will now see that we obtain more interesting formulas with the \ECapra-conjugacy.
Indeed, the \lzeropseudonorm\ in~\eqref{eq:pseudo_norm_l0}, 
the characteristic functions \( \delta_{ \LevelSet{\lzero}{k} } \) 
of its level sets~\eqref{eq:level_set_pseudonormlzero}
and the top-$k$ norms in~\eqref{eq:symmetric_gauge_norm}
are related by the following conjugate formulas.
The proof relies on results gathered in the Appendix~\ref{Appendix}.

\begin{theorem}
  \label{th:pseudonormlzero_conjugate}
  Let~$\CouplingCapra$ be the Euclidian coupling 
  \ECapra\ as defined in~\eqref{eq:Euclidian_coupling_CAPRAC}.
  Let \( k \in \ba{0,1,\ldots,d} \). We have that
  (with the convention, in~\eqref{eq:conjugate_delta_l0norm}
  and in~\eqref{eq:conjugate_l0norm}, 
  that \( \EuclidianTopNorm{0}{\cdot} = 0 \))   
  \begin{subequations}
    \begin{align}
      \SFM{ \delta_{ \LevelSet{\lzero}{k} } }{-\CouplingCapra}
      =       \SFM{ \delta_{ \LevelSet{\lzero}{k} } }{\CouplingCapra}
      &=
        \EuclidianTopNorm{k}{\cdot} 
        \eqfinv
        \label{eq:conjugate_delta_l0norm}
      \\ 
      \SFMbi{ \delta_{ \LevelSet{\lzero}{k} } }{\CouplingCapra} 
      &=
        \delta_{ \LevelSet{\lzero}{k} } 
        \eqfinv
        \label{eq:biconjugate_delta_l0norm}
      \\
      \SFM{ \lzero }{\CouplingCapra} 
      &=
        \sup_{l=0,1,\ldots,d} \Bc{ \EuclidianTopNorm{l}{\cdot} -l }  
        \eqfinv
        \label{eq:conjugate_l0norm}
      \\
      \SFMbi{ \lzero }{\CouplingCapra} 
      &=\lzero 
        \label{eq:biconjugate_l0norm}
        \eqfinp
    \end{align}
  \end{subequations}
\end{theorem}

\begin{proof} 
  We will use the framework and results of
  Sect.~\ref{sec:One-sided_linear_couplings}
  with \( \PRIMAL=\DUAL=\RR^d \),
  equipped with the scalar product \( \proscal{\cdot}{\cdot} \)
  and with the Euclidian norm
  \( \norm{\cdot} = \sqrt{ \proscal{\cdot}{\cdot} } \).
  \smallskip

  \noindent $\bullet$ We prove the first equality in~\eqref{eq:conjugate_delta_l0norm}:
  \begin{align*}
    \SFM{ \delta_{ \LevelSet{\lzero}{k} } }{-\CouplingCapra}
    &= 
      \sigma_{ -\normalized\np{ \LevelSet{\lzero}{k} } }
      \tag{by~\eqref{eq:one-sided_linear_Fenchel-Moreau_characteristic}
      because \( -\CouplingCapra = \coupling_{-\normalized} \) in~\eqref{eq:one-sided_linear_coupling} }
    \\
    &= 
      \sigma_{ \normalized\np{ \LevelSet{\lzero}{k} } }
      \tag{ by symmetry of the set \( \LevelSet{\lzero}{k} \) 
      in~\eqref{eq:pseudonormlzero_level_set}
      and of the mapping~\( \normalized \) in~\eqref{eq:normalization_mapping} }
    \\
    &= 
      \SFM{ \delta_{ \LevelSet{\lzero}{k} } }{\CouplingCapra} 
      \eqfinp
      \tag{by~\eqref{eq:one-sided_linear_Fenchel-Moreau_characteristic}}
      \intertext{We now turn to prove the second equality
      in~\eqref{eq:conjugate_delta_l0norm}:}
      \SFM{ \delta_{ \LevelSet{\lzero}{k} } }{-\CouplingCapra}
    &= 
      \sigma_{ \normalized\np{  \LevelSet{\lzero}{k} } }
      \tag{by~\eqref{eq:one-sided_linear_Fenchel-Moreau_characteristic}}
    \\
    &= 
      \sigma_{ \np{ \LevelSet{\lzero}{k} \cap \SPHERE } \cup \{0\} } 
      \tag{by the expression~\eqref{eq:normalization_mapping}
      of the normalization mapping~$\normalized$}
    \\
    &= 
      \sup \ba{ \sigma_{ \LevelSet{\lzero}{k} \cap \SPHERE }, 0 }
      \tag{ as the support function turns a union of sets into a supremum }
    \\
    &= 
      \sup \ba{ \sigma_{ \bigcup_{ {\cardinal{K} \leq k}} \np{ \FlatRR_{K} \cap \SPHERE }  } , 0 }
      \tag{ as \( \LevelSet{\lzero}{k} \cap \SPHERE =
      \bigcup_{ {\cardinal{K} \leq k}} \np{ \FlatRR_{K} \cap \SPHERE } \)
      by~\eqref{eq:level_set_pseudonormlzero}  }
    \\
    &= 
      \sup \ba{ 
      \sup_{ \cardinal{K} \leq k} \sigma_{ \np{ \FlatRR_{K} \cap \SPHERE }  }, 0 }
      \tag{ as the support function turns a union of sets into a supremum }
    \\
    &= 
      \sup \ba{ \EuclidianTopNorm{k}{\cdot}, 0 }
      \tag{ as \( \sup_{ \cardinal{K} \leq k} \sigma_{ \np{ \FlatRR_{K} \cap \SPHERE }  }
      = \EuclidianTopNorm{k}{\cdot} \) by~\eqref{eq:k-norm-from-support} }
    \\
    &= 
      \EuclidianTopNorm{k}{\cdot} 
      \eqfinp
  \end{align*}

  \noindent $\bullet$ Before proving~\eqref{eq:biconjugate_delta_l0norm},
  observe that, by definition~\eqref{eq:normalization_mapping}
  of the normalization mapping~$\normalized$, 
  we have:
  \begin{equation}
    \label{eq:pre-image_normalization_mapping}
    0 \in D \subset \RR^d \Rightarrow
    \normalized^{-1}\np{D} 
    = \normalized^{-1}\bp{ \np{\{0\} \cup \SPHERE} \cap D } 
    = \{0\} \cup \normalized^{-1}\np{\SPHERE \cap D } 
    \eqfinp
  \end{equation}

  \noindent $\bullet$ We prove~\eqref{eq:biconjugate_delta_l0norm}:
  \begin{align*}
    \SFMbi{ \delta_{ \LevelSet{\lzero}{k} } }{\CouplingCapra}
    &=
      \LFM{ \bp{ \SFM{ \delta_{ \LevelSet{\lzero}{k} } }{\CouplingCapra} }}
      \circ \normalized
      \tag{ by the formula~\eqref{eq:CAPRA_Fenchel-Moreau_biconjugate} 
      for the biconjugate }
    \\
    &=
      \LFM{ \bp{ \EuclidianTopNorm{k}{\cdot} } }
      \circ \normalized
      \tag{ by~\eqref{eq:conjugate_delta_l0norm} }
    \\
    &=
      \LFM{ \bp{ \sigma_{ \EuclidianSupport{k}{\BALL} } } }
      \circ \normalized
      \tag{ by~\eqref{eq:norm_dual_norm}, that expresses a norm as 
      the support function of the unit ball of the dual norm} 
    \\
    &=
      \delta_{ \EuclidianSupport{k}{\BALL} } \circ \normalized
      \tag{ as \( \LFM{ \bp{ \sigma_{ \EuclidianSupport{k}{\BALL} } } }=
      \delta_{ \EuclidianSupport{k}{\BALL} } \) since $\EuclidianSupport{k}{\BALL}$
      is closed convex \cite[Theorem 13.2]{Rockafellar:1970} }
    \\
    &=
      \delta_{ \normalized^{-1}\np{\EuclidianSupport{k}{\BALL} } }
      \tag{ by the definition~\eqref{eq:characteristic_function} of a characteristic
      function }
    \\
    &=
      \delta_{ \{0\} \cup \normalized^{-1}\np{\EuclidianSupport{k}{\BALL} \cap \SPHERE } }
      \tag{by~\eqref{eq:pre-image_normalization_mapping}
      since $0\in\EuclidianSupport{k}{\BALL}$ }
    \\
    &=
      \delta_{ \{0\} \cup \normalized^{-1}
      \np{ \LevelSet{\lzero}{k} \cap \SPHERE } }
      \tag{ as \( \EuclidianSupport{k}{\BALL} \cap \SPHERE
      =  \LevelSet{\lzero}{k} \cap \SPHERE \) 
      by~\eqref{eq:level_set_l0_inter_sphere_b} }
    \\
    &=
      \delta_{ \normalized^{-1}\np{\LevelSet{\lzero}{k}}  } 
      \tag{by~\eqref{eq:pre-image_normalization_mapping}
      since $0\in\LevelSet{\lzero}{k}$ }
    \\
    &=
      \delta_{ \LevelSet{\lzero}{k}  } 
      \tag{ as \( \lzero \circ \normalized = \lzero \) 
      by~\eqref{eq:pseudonormlzero_invariance_normalized} }
      \eqfinp
  \end{align*}

  \noindent$\bullet$ We prove~\eqref{eq:conjugate_l0norm}:
  \begin{align*}
    \SFM{ \lzero }{\CouplingCapra}
    &= 
      \SFM{\Bp{ \inf_{l=0,1,\ldots,d} \ba{ \delta_{ \LevelCurve{\lzero}{l} } \UppPlus
      l } } }{\CouplingCapra}
      \tag{ since \(  \lzero = 
      \inf_{l=0,1,\ldots,d} \ba{ \delta_{ \LevelCurve{\lzero}{l} } \UppPlus l } \)
      by using the level curves~\eqref{eq:pseudonormlzero_level_curve} }
    \\
    &= 
      \sup_{l=0,1,\ldots,d} \ba{ 
      \SFM{ \delta_{ \LevelCurve{\lzero}{k} }}{\CouplingCapra}
      \LowPlus \np{-l} }  
      \tag{ as conjugacies, being dualities, turn infima into suprema}
    \\
    &= 
      \sup_{l=0,1,\ldots,d} \ba{ 
      \sigma_{ \normalized\np{\LevelCurve{\lzero}{l} } } 
      \LowPlus \np{-l} }  
      \tag{ as \( \SFM{ \delta_{ \LevelCurve{\lzero}{k} }}{\CouplingCapra}
      = \sigma_{ \normalized\np{\LevelCurve{\lzero}{l} } } \)
      by~\eqref{eq:one-sided_linear_Fenchel-Moreau_characteristic}}
    \\
    &= 
      \sup \Ba{ 0, \sup_{l=1,\ldots,d} \ba{ 
      \sigma_{ \LevelCurve{\lzero}{l} \cap \SPHERE } 
      \LowPlus \np{-l} } }
      \tag{ as \( \sigma_{ \{0\} }=0 \) and \( \normalized\np{\LevelCurve{\lzero}{l} }
      = \LevelCurve{\lzero}{l} \cap \SPHERE \) when $l\geq 1$ 
      by~\eqref{eq:normalization_mapping} }
      \intertext{} 
    &= 
      \sup \Ba{ 0, \sup_{l=1,\ldots,d} \ba{ 
      \sigma_{ \overline{\LevelCurve{\lzero}{l} \cap \SPHERE} } 
      \LowPlus \np{-l} } }
      \tag{ as \( \sigma_{ \Primal } = \sigma_{ \overline{\Primal} } \) 
      for any \( \Primal \subset \RR^d \)
      \cite[Proposition~2.2.1]{Hiriart-Urruty-Lemarechal-I:1993} }
    \\
    &=
      \sup \Ba{ 0, \sup_{l=1,\ldots,d} \ba{ 
      \sigma_{ \LevelSet{\lzero}{l} \cap \SPHERE } \LowPlus \np{-l} } }
      \tag{ as \( \overline{\LevelCurve{\lzero}{l} \cap \SPHERE} =
      \LevelSet{\lzero}{l} \cap \SPHERE \) by~\eqref{eq:closure_level_curve} }
    \\
    &= 
      \sup \Ba{ 0, \sup_{l=1,\ldots,d} \ba{ 
      \sigma_{ \cup_{ \cardinal{K} \leq l} \np{ \FlatRR_{K} \cap \SPHERE }  } \LowPlus \np{-l} } }
      \tag{ as \( \LevelSet{\lzero}{l} \cap \SPHERE 
      = \cup_{ \cardinal{K} \leq l} \np{ \FlatRR_{K} \cap \SPHERE }   \) 
      by~\eqref{eq:level_set_pseudonormlzero} } 
    \\
    &= 
      \sup \Ba{ 0, \sup_{l=1,\ldots,d} \ba{ 
      \sup_{ \cardinal{K} \leq l} \sigma_{ \FlatRR_{K} \cap \SPHERE } \LowPlus \np{-l} } }
      \tag{ as the support function turns a union of sets into a supremum } \\
    &= 
      \sup \Ba{0, 
      \sup_{ l=1,\ldots,d} 
      \Bc{ \EuclidianTopNorm{l}{\dual} -l } }
      \tag{ as \( \sup_{ \cardinal{K} \leq l} \sigma_{ \FlatRR_{K} \cap \SPHERE }
      = \EuclidianTopNorm{l}{\cdot} \) by~\eqref{eq:k-norm-from-support}}
    \\
    &= 
      \sup_{ l=0,1,\ldots,d} 
      \Bc{ \EuclidianTopNorm{l}{\dual} -l } 
      \tag{ using the convention that \( \EuclidianTopNorm{0}{\cdot} = 0 \) }
      \eqfinp
  \end{align*}
  \smallskip

  \noindent$\bullet$ 
  We prove~\eqref{eq:biconjugate_l0norm}.
  It is easy to check that 
  \( \SFMbi{ \lzero }{\CouplingCapra}\np{0}=0=\lzero\np{0} \). 
  Therefore, let $\primal \in \RR^d\backslash\{0\} $ be given and assume that 
  $\lzero\np{\primal}=l \in \ba{1,\ldots,d}$. 
  We consider the mapping $\phi: ]0,+\infty[ \to \RR$ defined by
  \begin{equation}
    \phi(\lambda) =
    \frac{ \proscal{\primal}{ \lambda \primal} }{ \norm{\primal} }
    + \Bp{ - \sup \Ba{0, \sup_{j=1,\ldots,d} 
        \Bc{ \EuclidianTopNorm{j}{\lambda\primal} - j
        } } }
    \eqsepv \forall \lambda > 0 \eqfinv
    \label{eq:phi}
  \end{equation}
  and we are going to show that 
  \( \lim_{\lambda \to +\infty} \phi(\lambda) =l \).
  We have 
  \begin{align*}
    \phi(\lambda) 
    &=
      \lambda \norm{\primal}
      + \Bp{ - \sup \Ba{0, \sup_{j=1,\ldots,d} 
      \Bc{ \EuclidianTopNorm{j}{\lambda\primal} - j
      } } }
      \tag{ by definition~\eqref{eq:phi} of~$\phi$ }
    \\
    &= 
      \lambda \EuclidianTopNorm{l}{\primal}
      + \inf \Ba{0, -\sup_{j=1,\ldots,d} 
      \Bc{ \lambda \EuclidianTopNorm{j}{\primal} - j } }
      \tag{ as $\norm{\primal}=\EuclidianTopNorm{l}{\primal}$ 
      when $\lzero\np{\primal}=l$
      by~\eqref{eq:level_curve_l0_characterization} } 
    \\
    &= 
      \inf \Ba{ \lambda \EuclidianTopNorm{l}{\primal} ,
      \lambda \EuclidianTopNorm{l}{\primal} + 
      \inf_{j=1,\ldots,d} \Bp{ -\Bc{ \lambda \EuclidianTopNorm{j}{\primal} - j } } }
    \\
    &= 
      \inf \Ba{ \lambda \EuclidianTopNorm{l}{\primal} ,
      \inf_{j=1,\ldots,d} \Bp{ \lambda \bp{ \EuclidianTopNorm{l}{\primal} -
      \EuclidianTopNorm{j}{\primal} } + j } }
    \\
    &= 
      \inf \Ba{ \lambda \EuclidianTopNorm{l}{\primal} ,
      \inf_{j=1,\ldots,l-1} \Bp{ \lambda \bp{ \EuclidianTopNorm{l}{\primal} -
      \EuclidianTopNorm{j}{\primal} } + j } , 
      \inf_{j=l,\ldots,d} \Bp{ \lambda \bp{ \EuclidianTopNorm{l}{\primal} -
      \EuclidianTopNorm{j}{\primal} } + j } }
    \\
    &= 
      \inf \Ba{ \lambda \EuclidianTopNorm{l}{\primal} ,
      \inf_{j=1,\ldots,l-1} \Bp{ \lambda \bp{ \EuclidianTopNorm{l}{\primal} -
      \EuclidianTopNorm{j}{\primal} } + j } , l }
  \end{align*}
  as $\EuclidianTopNorm{j}{\primal}= \EuclidianTopNorm{l}{\primal}$ for
  $j \geq l$ by~\eqref{eq:level_curve_l0_characterization}. 
  Let us show that the two first terms in the infimum
  go to $+\infty$ when \( \lambda \to +\infty \).
  The first term \( \lambda \EuclidianTopNorm{l}{\primal} \) goes to $+\infty$ 
  because \( \EuclidianTopNorm{l}{\primal}=\norm{\primal}>0 \) by assumption
  ($\primal \neq 0$).
  The second term \( \inf_{j=1,\ldots,l-1} \Bp{ \lambda \bp{ \EuclidianTopNorm{l}{\primal} -
      \EuclidianTopNorm{j}{\primal} } + j } \)
  also goes to $+\infty$ because 
  $\lzero\np{\primal}=l $, so that 
  \( \norm{\primal}=\EuclidianTopNorm{l}{\primal} > 
  \EuclidianTopNorm{j}{\primal} \)
  for $j=1,\ldots,l-1$
  by~\eqref{eq:level_curve_l0_characterization}.
  Therefore, 
  \( \lim_{\lambda \to +\infty} \phi(\lambda) = 
  \inf \{ +\infty, +\infty, l \}=l \).
  This concludes the proof since 
  \begin{align*}
    l = \lim_{\lambda \to +\infty} \phi(\lambda) 
    & \leq 
      \sup_{\dual \in \RR^d } \bgp{ \frac{ \proscal{\primal}{\dual} }{ \norm{\primal} }
      \LowPlus 
      \Bp{ - \sup \Ba{0, \sup_{j=1,\ldots,d} 
      \Bc{ \EuclidianTopNorm{j}{\dual} - j } } } }
      \tag{ by definition~\eqref{eq:phi} of~$\phi$ }
    \\
    &=
      \sup_{\dual \in \RR^d } \bgp{ \frac{ \proscal{\primal}{\dual} }{ \norm{\primal} }
      \LowPlus 
      \Bp{ - \sup_{j=0,1,\ldots,d} 
      \Bc{ \EuclidianTopNorm{j}{\dual} - j } } }
      \tag{ by the convention \( \EuclidianTopNorm{0}{\cdot} = 0 \) }
    \\
    &=
      \sup_{\dual \in \RR^d } \bgp{ \frac{ \proscal{\primal}{\dual} }{ \norm{\primal} }
      \LowPlus \Bp{ - \SFM{ \lzero }{\CouplingCapra}\np{\dual} } }
      \tag{ by the formula~\eqref{eq:conjugate_l0norm} for 
      \( \SFM{ \lzero }{\CouplingCapra} \) }
    \\
    &=
      \SFMbi{ \lzero }{\CouplingCapra}\np{\primal}
      \tag{ by the biconjugate formula~\eqref{eq:Fenchel-Moreau_biconjugate} }
    \\
    & \leq 
      \lzero\np{\primal} 
      \tag{ by~\eqref{eq:galois-cor} giving 
      \( \SFMbi{ \lzero }{\CouplingCapra} \leq \lzero \)}
    \\
    & = l 
      \tag{ by assumption }
      \eqfinp
  \end{align*}
  Therefore, we have obtained \( l=\SFMbi{ \lzero
  }{\CouplingCapra}\np{\primal}=\lzero\np{\primal}\).
  \smallskip

  This ends the proof.
\end{proof}

In the next Section, we present a (rather unexpected) consequence of the just established
property that \( \SFMbi{ \lzero }{\CouplingCapra} = \lzero \).

\section{Hidden convexity in the pseudonorm~$\lzero$}
\label{Hidden_convexity_in_the_pseudonorm}

In~\S\ref{The_pseudonorm_lzero_coincides_on_the_Euclidian_unit_sphere},
we show that there exists a proper convex lsc function on~\( \RR^d \) 
which takes the same values as the \lzeropseudonorm\
on the Euclidian unit sphere~$\SPHERE$.
This property of hidden convexity somehow comes as a surprise as the \lzeropseudonorm\ is a highly
nonconvex function of combinatorial nature.
Then, we provide various expression for the underlying
proper convex lsc function and,
in~\S\ref{Graphical_representations_of_the_proper_convex_lsc_function}, 
we display mathematical expressions 
and graphical representations in the two-dimensional case.

\subsection{Hidden convexity in the pseudonorm~$\lzero$}
\label{The_pseudonorm_lzero_coincides_on_the_Euclidian_unit_sphere}

We introduce the function \( {\cal L}_0 : \RR^d \to \barRR \) defined by 
\begin{equation}
  {\cal L}_0= 
  \LFMr{ \Bp{ \sup_{l=0,1,\ldots,d} \Bc{ \EuclidianTopNorm{l}{\cdot} -l } } }
  \eqfinp
  \label{eq:definition_calL0}
\end{equation}

\begin{theorem}
  \label{th:pseudonormlzero_convex}

  The function \( {\cal L}_0 \) 
  in~\eqref{eq:definition_calL0} is a proper convex lsc function on~$\RR^d$.
  The pseudo\-norm~$\lzero$ coincides, on the Euclidian unit sphere~$\SPHERE$
  of~$\RR^d$, with the function~${\cal L}_0$, that is,
  \begin{equation}
    \lzero\np{\primal} = {\cal L}_0\np{\primal} 
    \eqsepv 
    \forall \primal \in \SPHERE 
    \eqfinp
    \label{eq:pseudonormlzero_convex_sphere}
  \end{equation}
  As a consequence, the pseudonorm~$\lzero$ displays hidden convexity,
  as it can be expressed as the 
  composition of the proper convex lsc function~${\cal L}_0$
  in~\eqref{eq:definition_calL0}
  with the 
  normalization mapping~$\normalized$ in~\eqref{eq:normalization_mapping}:
  \begin{equation}
    \lzero\np{\primal} = 
    {\cal L}_0 \Bp{ \frac{ \primal }{ \norm{\primal} } } 
    \eqsepv 
    \forall \primal \in \RR^d\backslash\{0\}
    \eqfinp 
    \label{eq:pseudonormlzero_convex_normalization_mapping}
  \end{equation}
  The proper convex lsc function~${\cal L}_0$ has the property
  \begin{equation}
    {\cal L}_0\bp{\np{\primal_1,\ldots, \primal_d}}=
    {\cal L}_0\bp{\np{\module{\primal_1},\ldots, \module{\primal_d}}}
    \eqsepv \forall \np{\primal_1,\ldots, \primal_d} \in \RR^d \eqfinp
    \label{eq:calL0_module}
  \end{equation}
\end{theorem}

\begin{proof}
  First, 
  it is easily seen that the closed convex function~${\cal L}_0$
  in~\eqref{eq:definition_calL0} is proper lsc (see Footnote~\ref{ft:closed_convex_function}).
  \smallskip

  Second, we prove~\eqref{eq:pseudonormlzero_convex_sphere}.
  For \( \primal \in \SPHERE \), we have 
  \begin{align*}
    \lzero\np{\primal} 
    &=
      \SFMbi{ \lzero }{\CouplingCapra}\np{\primal}
      \tag{ by~\eqref{eq:biconjugate_l0norm} }
    \\
    &=
      \sup_{ \dual \in \RR^d } \Bp{ \CouplingCapra\np{\primal,\dual} 
      \LowPlus \bp{ -\SFM{ \lzero }{\CouplingCapra}\np{\dual} } } 
      \tag{ by the biconjugate formula~\eqref{eq:Fenchel-Moreau_biconjugate} }
    \\
    &=
      \sup_{ \dual \in \RR^d } \Bp{ \proscal{\primal}{\dual} 
      \LowPlus \bp{ -\SFM{ \lzero }{\CouplingCapra}\np{\dual} } } 
      \tag{ by~\eqref{eq:Euclidian_coupling_CAPRAC} with
      \( \norm{\primal}=1 \) since \( \primal \in \SPHERE \) } 
    \\
    &=
      \sup_{ \dual \in \RR^d } \bgp{ \proscal{\primal}{\dual} 
      \LowPlus \Bp{ -\bp{ 
      \sup_{l=0,1,\ldots,d} \Bc{ \EuclidianTopNorm{l}{\dual} -l }
      } }
      }
      \tag{ by~\eqref{eq:conjugate_l0norm} }
    \\
    &=
      \LFMr{ \Bp{ 
      \sup_{l=0,1,\ldots,d} \Bc{ \EuclidianTopNorm{l}{\dual} -l }
      } }\np{\primal}  
      \tag{by the expression~\eqref{eq:Fenchel_conjugate} of the Fenchel conjugate}
    \\
    &=
      {\cal L}_0\np{\primal} 
      \tag{by~\eqref{eq:definition_calL0} }
      \eqfinp 
  \end{align*}

  Third, the equality~\eqref{eq:pseudonormlzero_convex_normalization_mapping}
  is an easy consequence of the 
  property~\eqref{eq:pseudonormlzero_invariance_normalized}
  that the pseudonorm~$\lzero$ is invariant along
  any open ray of~$\RR^d$.
  \smallskip

  Fourth, we prove~\eqref{eq:calL0_module}.
  For this purpose, we take any \( \epsilon \in \{-1,1\}^d \)
  and we consider the symmetry~\( \tilde\epsilon \) of~$\RR^d$, 
  defined by \(     \tilde\epsilon\np{\primal_1,\ldots, \primal_d}=
  \np{\epsilon_1\primal_1,\ldots, \epsilon_d\primal_d} \), 
  for all \( \np{\primal_1,\ldots, \primal_d} \in \RR^d \).
  We will show that the proper convex lsc function~${\cal L}_0$ is
  invariant under the symmetry~\( \tilde\epsilon \), hence
  satisfies~\eqref{eq:calL0_module}.
  Indeed, for any \( \primal \in \RR^d \), 
  we have
  
  \begin{align*}
    {\cal L}_0\np{\tilde\epsilon\primal}   
    &=
      \LFMr{ \Bp{ \sup_{l=0,1,\ldots,d} \Bc{ \EuclidianTopNorm{l}{\cdot} -l } } }
      \np{\tilde\epsilon\primal}   
      \tag{ by~\eqref{eq:definition_calL0} }
    \\
    &=
      \sup_{ \dual \in \RR^d } \bgp{ \proscal{\tilde\epsilon\primal}{\dual} 
      \LowPlus \Bp{ -\bp{ 
      \sup_{l=0,1,\ldots,d} \Bc{ \EuclidianTopNorm{l}{\dual} -l }
      } }
      }
      \tag{by the expression~\eqref{eq:Fenchel-Moreau_reverse_conjugate} 
      of the reverse Fenchel conjugate}
    \\
    &=
      \sup_{ \dual \in \RR^d } \bgp{ \proscal{\primal}{\tilde\epsilon\dual} 
      \LowPlus \Bp{ -\bp{ 
      \sup_{l=0,1,\ldots,d} \Bc{ \EuclidianTopNorm{l}{\dual} -l }
      } }
      }
      \tag{ as easily seen }
    \\
    &=
      \sup_{ \dual \in \RR^d } \bgp{ \proscal{\primal}{\tilde\epsilon\dual} 
      \LowPlus \Bp{ -\bp{ 
      \sup_{l=0,1,\ldots,d} \Bc{ \EuclidianTopNorm{l}{\tilde\epsilon\dual} -l }
      } }
      }
      \intertext{as \( \EuclidianTopNorm{0}{\cdot} \equiv 0 \) (by convention)
      and all norms \( \EuclidianTopNorm{l}{\cdot}, l=1,\ldots,d \), 
      are invariant under the symmetry~$\tilde\epsilon$}
    &=
      \LFMr{ \Bp{ \sup_{l=0,1,\ldots,d} \Bc{ \EuclidianTopNorm{l}{\cdot} -l } } }
      \np{\primal}   
      \tag{ as \( \tilde\epsilon^{-1}(\RR^d)=\RR^d \) } 
    \\
    &=
      {\cal L}_0\np{\primal} 
      \eqfinp 
      \tag{ by~\eqref{eq:definition_calL0} }
  \end{align*}
  
  This ends the proof.
\end{proof}

Now, we provide three expressions for the 
proper convex lsc function~${\cal L}_0$ in~\eqref{eq:definition_calL0}.

\begin{proposition}
  \label{pr:calL0}
  The proper convex lsc function~${\cal L}_0$ in~\eqref{eq:definition_calL0}
  can also be characterized 
  \begin{itemize}
  \item 
    either by its epigraph
    \begin{equation}
      \epigraph\,{\cal L}_0 = 
      \closedconvexhull\Bp{ \bigcup_{l=0}^d 
        \EuclidianSupport{l}{\BALL} \times [l,+\infty[ } 
      \eqfinv
      \label{eq:pseudonormlzero_convex_epigraph}
    \end{equation}
    where \( \EuclidianSupport{0}{\BALL}=\{0\} \) (by convention)
    and \( \EuclidianSupport{1}{\BALL} 
    \subset \cdots \subset
    \EuclidianSupport{l-1}{\BALL} \subset \EuclidianSupport{l}{\BALL} 
    \subset \cdots \subset \EuclidianSupport{d}{\BALL} = \BALL \) denote the unit
    balls associated with the $l$-support norms defined in~\eqref{eq:support_norm}
    for $l=1,\ldots,d$,
  \item 
    or, as the largest proper convex lsc function
    below the (extended integers valued) function~$L_0$ defined by
    \begin{equation}
      L_0\np{\primal} =
      \begin{cases}
        0 & \text{ if } \primal=0,\\
        l & \text{ if } \primal \in
        \EuclidianSupport{l}{\BALL} \backslash \EuclidianSupport{l-1}{\BALL}
        \eqsepv l=1,\ldots,d, \\
        +\infty & \text{ if } \primal \not\in \EuclidianSupport{d}{\BALL} = \BALL,
      \end{cases}
      \label{eq:L_0}
    \end{equation}
  \item 
    or also by the expression
    \begin{equation}
      {\cal L}_0\np{\primal} = 
      \min_{ \substack{%
          \primal^{(1)} \in \RR^d, \ldots, \primal^{(d)} \in \RR^d 
          \\
          \sum_{ l=1 }^{ d } \EuclidianSupportNorm{l}{\primal^{(l)}} \leq 1 
          \\
          \sum_{ l=1 }^{ d } \primal^{(l)} = \primal
        } }
      \sum_{ l=1 }^{ d } l \EuclidianSupportNorm{l}{\primal^{(l)}} 
      \eqsepv \forall \primal \in \RR^d 
      \eqfinp 
      \label{eq:pseudonormlzero_convex_minimum}
    \end{equation}
  \end{itemize}
\end{proposition}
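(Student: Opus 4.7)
The plan is to establish the identity \( {\cal L}_0 = \LFMbi{L_0} \), the Fenchel biconjugate of the function~$L_0$ defined in~\eqref{eq:L_0}, and then to derive all three characterizations from this identity by classical convex analysis.

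The first step is to compute the Fenchel conjugate of~$L_0$. Using the nestedness \( \SupportBall{0}{\BALL} \subset \cdots \subset \SupportBall{d}{\BALL} \) of the support-norm unit balls, the function~$L_0$ can be rewritten as \( L_0 = \inf_{l=0,\ldots,d} \bc{ l + \delta_{\SupportBall{l}{\BALL}} } \). Since conjugacies turn infima into suprema, and since~\eqref{eq:support_norm} implies that \( \SymmetricGaugeNorm{l}{\cdot} \) is the support function of \( \SupportBall{l}{\BALL} \) (as its dual norm), one obtains
\[
\LFM{L_0}\np{\dual} = \sup_{l=0,\ldots,d} \Bc{ \SymmetricGaugeNorm{l}{\dual} - l }\eqfinp
\]
Comparing with the definition of~${\cal L}_0$ in~\eqref{eq:pseudonormlzero_convex_sphere} yields \( {\cal L}_0 = \LFMr{\bp{\LFM{L_0}}} = \LFMbi{L_0} \), which is proper since \( \LFMbi{L_0}\np{0} = 0 \). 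Characterization~(2) is then the classical Fenchel-Moreau theorem: \( \LFMbi{L_0} \) is the largest proper convex lsc minorant of~$L_0$. Characterization~(1) follows from the classical identity \( \epigraph \LFMbi{f} = \closedconvexhull\np{\epigraph f} \) applied to \( f = L_0 \), combined with the direct reading \( \epigraph L_0 = \bigcup_{l=0}^d \SupportBall{l}{\BALL} \times [l,+\infty[ \) from~\eqref{eq:L_0}.

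For characterization~(3), I would explicitly parametrize the convex hull \( \convexhull\bp{ \bigcup_l \SupportBall{l}{\BALL} \times [l,+\infty[ } \). Any element has the form \( \bp{ \sum_l \lambda_l \primal^{(l)}, \sum_l \lambda_l \, l + s } \) with \( \lambda_l \geq 0 \), \( \sum_l \lambda_l = 1 \), \( \primal^{(l)} \in \SupportBall{l}{\BALL} \), \( s \geq 0 \). The substitution \( \tilde \primal^{(l)} = \lambda_l \primal^{(l)} \) converts \( \primal^{(l)} \in \SupportBall{l}{\BALL} \) into \( \SupportNorm{l}{\tilde \primal^{(l)}} \leq \lambda_l \), and the condition \( \sum_l \lambda_l = 1 \) into \( \sum_{l \geq 1} \SupportNorm{l}{\tilde \primal^{(l)}} \leq 1 \), the remainder being absorbed by the trivial \( l = 0 \) term (\( \SupportBall{0}{\BALL} = \{0\} \) contributes nothing to~$\primal$ or the height). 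Minimizing the height \( \sum_l \lambda_l \, l \) for fixed~\( \tilde \primal^{(l)} \) is achieved by choosing \( \lambda_l = \SupportNorm{l}{\tilde \primal^{(l)}} \) for \( l \geq 1 \), which yields exactly formula~\eqref{eq:pseudonormlzero_convex_minimum}. Compactness of the feasible set in~\eqref{eq:pseudonormlzero_convex_minimum}, a consequence of the norm bound, guarantees the infimum is attained, justifying \( \min \); moreover, since \( \convexhull\bp{ \bigcup_l \SupportBall{l}{\BALL} \times \{l\} } \) is compact (convex hull of finitely many compact sets) and the recession direction \( \{0\} \times [0,+\infty[ \) is a fixed closed convex cone, the convex hull of the original union is already closed, and no further closure operation is needed.

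The main obstacle lies in the last step: one must carefully handle the two-way change of variables (degenerate cases such as \( \lambda_l = 0 \), or \( \tilde \primal^{(l)} = 0 \) with \( \lambda_l > 0 \)), correctly absorb the slack \( 1 - \sum_{l \geq 1} \SupportNorm{l}{\tilde \primal^{(l)}} \) via the trivial \( l = 0 \) index, and verify that the infimum in~\eqref{eq:pseudonormlzero_convex_minimum} matches the height function of the closed convex hull on both sides rather than merely bounding it one-way.
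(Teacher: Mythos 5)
Your proof is correct, and it hinges on the same pivot identity as the paper's, namely \( {\cal L}_0=\LFMbi{L_0} \), obtained by conjugating the decomposition \( L_0=\inf_{l}\bc{\,l+\delta_{\SupportBall{l}{\BALL}}}\, \); characterization (2) is then Fenchel--Moreau in both treatments. For (1) you invoke \( \epigraph \LFMbi{L_0}=\closedconvexhull\np{\epigraph L_0} \) (legitimate here since \( L_0\geq 0 \) makes the biconjugate proper), whereas the paper invokes Rockafellar's Theorem~16.5 on the epigraph of the conjugate of a pointwise supremum --- interchangeable standard facts. The genuine divergence is in (3): the paper applies Zalinescu's formula \( \LFM{\bp{\sup_l \fonctionprimal_l}}=\min_{\lambda\in\Delta_{d+1}}\LFM{\bp{\sum_l\lambda_l\fonctionprimal_l}} \) together with the support-function-of-a-Minkowski-sum identity to land on a minimum over the simplex, while you read \( {\cal L}_0\np{\primal} \) directly off the epigraph formula of (1), parametrize \( \convexhull\bp{\bigcup_l \SupportBall{l}{\BALL}\times[l,+\infty[} \) by convex combinations taking one point per ball, and justify both the absence of a closure operation and the attainment of the minimum by the compactness of \( \convexhull\bp{\bigcup_l \SupportBall{l}{\BALL}\times\{l\}} \) plus the fixed recession cone \( \{0\}\times[0,+\infty[ \). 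Your route is more elementary and self-contained (no constraint-qualification or external min-attainment theorem needed) and yields the extra information that the closed convex hull in (1) is already a convex hull; the paper's route is shorter given the cited corollary and extends more readily beyond this finite polyhedral-in-\(l\) situation. Both arguments then conclude with the identical change of variables \( \primal^{(l)}=\lambda_l z^{(l)} \), \( \lambda_l=\SupportNorm{l}{\primal^{(l)}} \), whose degenerate cases you handle correctly.
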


\begin{proof} \quad
  
  \noindent$\bullet$ 
  First, we prove that the epigraph of~${\cal L}_0$
  in~\eqref{eq:definition_calL0} is given
  by~\eqref{eq:pseudonormlzero_convex_epigraph}.
  Indeed, we have that 
  \begin{align*}
    \epigraph\, {\cal L}_0
    &=
      \epigraph 
      \LFMr{ \Bp{ \sup_{l=0,1,\ldots,d} \Bc{ \EuclidianTopNorm{l}{\cdot} -l } } }
      \tag{ by~\eqref{eq:definition_calL0} }
    \\
    &=
      \closedconvexhull\Bp{ \bigcup_{l=0}^d 
      \epigraph \LFMr{ \Bc{ \EuclidianTopNorm{l}{\cdot} -l } } }
      \tag{ by~\cite[Theorem 16.5]{Rockafellar:1970} } 
    \\
    &=
      \closedconvexhull\Bp{ \bigcup_{l=0}^d 
      \epigraph \LFMr{ \Bc{ \sigma_{ \EuclidianSupport{l}{\BALL} } -l } } }
      \tag{ by~\eqref{eq:symmetric_gauge_norm_egale_dual_norm} }
    \\
    &=
      \closedconvexhull\Bp{ \bigcup_{l=0}^d 
      \epigraph \Bc{ \delta_{ \EuclidianSupport{l}{\BALL} } +l } }
      \tag{ as \( \LFMr{ \Bc{ \sigma_{ \EuclidianSupport{l}{\BALL} } -l } }
      = \delta_{ \EuclidianSupport{l}{\BALL} } +l \) } 
    \\
    &=
      \closedconvexhull\Bp{ \bigcup_{l=0}^d 
      \EuclidianSupport{l}{\BALL} \times [l,+\infty[ }
      \tag{ as is easily concluded}
      \eqfinp
  \end{align*}
  \smallskip

  \noindent$\bullet$
  Second, we prove that the function~${\cal L}_0$  in~\eqref{eq:definition_calL0}
  is the largest proper convex lsc function
  below the function~$L_0$ defined by~\eqref{eq:L_0}.
  Indeed, we have that 
  \begin{align*}
    {\cal L}_0
    &= 
      \LFMr{ \Bp{ \sup_{l=0,1,\ldots,d} \Bc{ \EuclidianTopNorm{l}{\cdot} -l } } }
      \tag{ by~\eqref{eq:definition_calL0} }
    \\
    &=
      \LFMr{ \Bp{ \sup_{l=0,1,\ldots,d} \Bc{ \sigma_{ \EuclidianSupport{l}{\BALL} } -l } } }
      \tag{ by~\eqref{eq:symmetric_gauge_norm_egale_dual_norm} }
    \\
    &=
      \LFMr{ \Bp{ \sup_{l=0,1,\ldots,d} 
      \LFM{ \Bc{ \delta_{ \EuclidianSupport{l}{\BALL} } +l } } } } 
      \tag{ as \( \LFM{ \Bc{ \delta_{ \EuclidianSupport{l}{\BALL} } +l } }
      = \sigma_{ \EuclidianSupport{l}{\BALL} } -l \) } 
    \\
    &=
      \LFMr{ \Bp{ \LFM{ \Bc{ \inf_{l=0,1,\ldots,d} 
      \bc{ \delta_{ \EuclidianSupport{l}{\BALL} } +l } } } } }
      \tag{ as conjugacies, being dualities, turn infima into suprema }
    \\
    &=
      \LFMbi{ \Bp{ \inf_{l=0,1,\ldots,d} 
      \Bc{ \delta_{ \EuclidianSupport{l}{\BALL} } +l } } }
      \tag{ by definition~\eqref{eq:Fenchel_biconjugate} of the Fenchel biconjugate}
    \\
    &=
      \LFMbi{ L_0 }
  \end{align*}
  as it is easy to establish that the function \( \inf_{l=0,1,\ldots,d} 
  \Bc{ \delta_{ \EuclidianSupport{l}{\BALL} } +l } \) coincides 
  with the function~$L_0$ defined by~\eqref{eq:L_0}.
  Indeed, it is deduced from~\eqref{eq:dual_support_norm_unit_ball} that 
  \( \{0\}=\EuclidianSupport{0}{\BALL} \subset \EuclidianSupport{1}{\BALL} 
  \subset \cdots \subset
  \EuclidianSupport{l-1}{\BALL} \subset \EuclidianSupport{l}{\BALL} 
  \subset \cdots \subset \EuclidianSupport{d}{\BALL} = \BALL \).
  Finally, from \( {\cal L}_0=\LFMbi{ L_0 } \), we conclude that 
  ${\cal L}_0$ is the largest proper convex lsc function
  below the function~$L_0$. 
  \smallskip

  \noindent$\bullet$
  Third, we prove that ${\cal L}_0$ in~\eqref{eq:definition_calL0} is given
  by~\eqref{eq:pseudonormlzero_convex_minimum}.
  For this purpose, we use a general formula 
  \cite[Corollary~2.8.11]{Zalinescu:2002}
  for the Fenchel conjugate of the supremum of proper convex functions
  \(\fonctionprimal_l  : \RR^d \to \barRR \), $l=0,1,\ldots,d$:
  \begin{equation}
    \bigcap_{l=0,1,\ldots,d} \dom\,\fonctionprimal_l \neq \emptyset
    \Rightarrow
    \LFM{ \bp{ \sup_{l=0,1,\ldots,d} \fonctionprimal_l } }
    =
    \min_{\lambda \in \Delta_{d+1}} 
    \LFM{ \Bp{ \sum_{ l=0}^{d} \lambda_l \fonctionprimal_l } }
    \eqfinv
    \label{eq:Fenchel_conjugate_of_the_supremum_of_proper_convex_functions}
  \end{equation}
  where \( \Delta_{d+1} \) is the simplex of~$\RR^d$.
  As the functions
  \(\fonctionprimal_l = \EuclidianTopNorm{l}{\cdot} -l \) 
  are proper convex, we obtain
  
  \begin{align*}
    {\cal L}_0
    &= 
      \LFMr{ \Bp{ \sup_{l=0,1,\ldots,d} \Bc{ \EuclidianTopNorm{l}{\cdot} -l } } }
      \tag{ by~\eqref{eq:definition_calL0} }
    \\
    &=
      \LFMr{ \Bp{ \sup_{l=0,1,\ldots,d} \Bc{ \sigma_{ \EuclidianSupport{l}{\BALL} } -l } } }
      \tag{ by~\eqref{eq:symmetric_gauge_norm_egale_dual_norm} }
    \\
    &=
      \min_{\lambda \in \Delta_{d+1}} 
      \LFM{ \Bp{ \sum_{ l=0}^{d} \lambda_l \Bc{ \sigma_{ \EuclidianSupport{l}{\BALL} } -l 
      } } } 
      \tag{ by~\eqref{eq:Fenchel_conjugate_of_the_supremum_of_proper_convex_functions} }
    \\
    &=
      \min_{\lambda \in \Delta_{d+1}} 
      \LFM{ \Bp{ { \sigma_{ \sum_{ l=0}^{d} \lambda_l \EuclidianSupport{l}{\BALL} }
      - \sum_{ l=0}^{d} \lambda_l l 
      } } }
      \intertext{as, for all $l=0,\ldots,d$, 
      \( \lambda_l \sigma_{ \EuclidianSupport{l}{\BALL} } = 
      \sigma_{ \lambda_l \EuclidianSupport{l}{\BALL} } \) since \( \lambda_l \geq 0 \), 
      and then using the well-known property that the support function of 
      a Minkowski sum of subsets is the sum of the support functions of the individual subsets \cite[p.~113]{Rockafellar:1970} }
      \intertext{}
    &=
      \min_{\lambda \in \Delta_{d+1}} \Bp{ 
      \delta_{ \sum_{ l=0}^{d} \lambda_l \EuclidianSupport{l}{\BALL} } 
      +
      \sum_{ l=0}^{d} \lambda_l l} 
      \eqfinp
      \tag{ as \( \LFMr{ \Bc{ \sigma_{ \Convex} -t } }
      = \delta_{ \Convex } +t \) for any closed convex subset~$\Convex$} 
  \end{align*}
  Therefore, for all \( \primal \in \RR^d \), we have 
  \begin{subequations}
    \begin{align}
      {\cal L}_0\np{\primal} 
      &= 
        \min_{ \substack{%
        \lambda \in \Delta_{d+1} 
      \\
      \primal \in \sum_{ l=0 }^{ d } \lambda_l \EuclidianSupport{l}{\BALL} 
      } } 
      \sum_{ l=0 }^{ d } \lambda_l l 
      \eqsepv
      \\
      &=
        \min_{ \substack{%
        z^{(1)} \in \EuclidianSupport{1}{\BALL}, 
        \ldots, z^{(d)} \in \EuclidianSupport{d}{\BALL} 
      \\
      \lambda_1 \geq 0, \ldots, \lambda_d \geq 0
      \\
      \sum_{ l=1 }^{ d } \lambda_l \leq 1 
      \\
      \sum_{ l=1 }^{ d } \lambda_l z^{(l)} = \primal
      } }  
      \sum_{ l=1 }^{ d } \lambda_l l 
      \intertext{by ignoring~$\lambda_0 \geq 0$ since 
      \( \EuclidianSupport{0}{\BALL} =\{0\} \) by convention }
      &=
        \min_{ \substack{%
        s^{(1)} \in \EuclidianSupport{1}{\SPHERE}, 
        \ldots, s^{(d)} \in \EuclidianSupport{d}{\SPHERE} 
      \\
      \mu_1 \geq 0, \ldots, \mu_d \geq 0
      \\
      \sum_{ l=1 }^{ d } \mu_l \leq 1 
      \\
      \sum_{ l=1 }^{ d } \mu_l s^{(l)} = \primal
      } }  
      \sum_{ l=1 }^{ d } \mu_l l 
      \intertext{where \( \EuclidianSupport{l}{\SPHERE} \) is the unit sphere
      of the 
      $l$-support norm~\( \EuclidianSupportNorm{l}{\cdot} \), and
      the inequality $\leq$ is obvious as 
      \( \EuclidianSupport{l}{\SPHERE} \subset \EuclidianSupport{l}{\BALL} \)
      for all $l=1,\ldots,d$;
      the inequality $\geq$ comes from putting,
      for $l=1,\ldots,d$, 
      \(  \mu_l = \lambda_l \EuclidianSupportNorm{l}{z^{(l)}} \)
      and observing that 
      i) 
      there exist \( s^{(l)} \in \EuclidianSupport{l}{\SPHERE} \) such that 
      \( \lambda_l z^{(l)} = \mu_l s^{(l)} \) 
      (take any $s^{(l)}$ when $z^{(l)}=0$ and
      \( s^{(l)}=\frac{z^{(l)}}{ \EuclidianSupportNorm{l}{z^{(l)}} } \) 
      when $z^{(l)} \neq 0$)
      ii) 
      \( \sum_{ l=1 }^{ d } \lambda_l l \geq
      \sum_{ l=1 }^{ d } \lambda_l \EuclidianSupportNorm{l}{z^{(l)}} l
      = \sum_{ l=1 }^{ d } \mu_l l \) 
      because \( \EuclidianSupportNorm{l}{z^{(l)}} \leq 1 \) 
      }
      &=
        \min_{ \substack{%
        \primal^{(1)} \in \RR^d, \ldots, \primal^{(d)} \in \RR^d 
      \\
      \sum_{ l=1 }^{ d } \EuclidianSupportNorm{l}{\primal^{(l)}} \leq 1 
      \\
      \sum_{ l=1 }^{ d } \primal^{(l)} = \primal
      } }
      \sum_{ l=1 }^{ d } \EuclidianSupportNorm{l}{\primal^{(l)}} l 
      \eqfinv
    \end{align}
    by putting \( \primal^{(l)} = \mu_l s^{(l)}\),
    for all $l=1,\ldots,d$.
  \end{subequations}
  \smallskip

  This ends the proof.
\end{proof}

With Proposition~\ref{pr:calL0}, we dispose of expressions that 
make it possible to obtain more involved formulas for the function~${\cal L}_0$
in~\eqref{eq:definition_calL0}.
In particular, 
we will now obtain graphical representations
and mathematical formulas for the proper convex lsc function~${\cal L}_0$ on~$\RR^2$.

\subsection{Graphical representations of the function~${\cal  L}_0$ on~$\RR^2$}
\label{Graphical_representations_of_the_proper_convex_lsc_function}

In dimension~$d=1$, it is easily computed that the function ${\cal L}_0$ 
in~\eqref{eq:definition_calL0} is 
the absolute value function~\( \module{\cdot} \) on the segment~\( [-1,1] \)
and $+\infty$ outside the segment~\( [-1,1] \).
The pseudonorm~$\lzero$ coincides with~${\cal L}_0$ 
on the one-dimensional unit sphere~$\{-1,1\}$ --- but also with any convex function taking the value~$1$
on~$\{-1,1\}$ (the function~\( \module{\cdot} \),
the constant function~$1$, etc.). 
\smallskip

In dimension~$d=2$, the function~${\cal L}_0$ 
in~\eqref{eq:definition_calL0} is,
by Proposition~\ref{pr:calL0},
the largest proper convex lsc function which is below the function
which takes the value~$0$ on the zero~$(0,0)$, 
the value~$1$ on the unit lozenge of~$\RR^2$
deprived of~$(0,0)$, 
and the value~$2$ on the unit disk of~$\RR^2$
deprived of the unit lozenge
(see Proposition~\ref{pr:calL0}). 
As a consequence, the graph of~${\cal L}_0$ 
contains segments (in~$\RR^3$) that join 
the zero~$(0,0,0)$ of the horizontal plane at height~$z=0$ 
with the unit lozenge of the horizontal plane at height~$z=1$,
and this latter 
with the unit circle of the horizontal plane at height~$z=2$.
In Figure~\ref{fig:graph1}, 
we have displayed two views of 
the topological closure of the graph of~${\cal L}_0$.
As the function~${\cal L}_0$ is not continuous 
at the four extremal points --- 
$(0,1)$, $(1,0)$, $(0,-1)$, $(-1,0)$ --- of the unit lozenge,
it is delicate to depict the graph and easier to do so for its
topological closure. 
\begin{figure}
  \begin{center}
    \includegraphics[width=6cm]{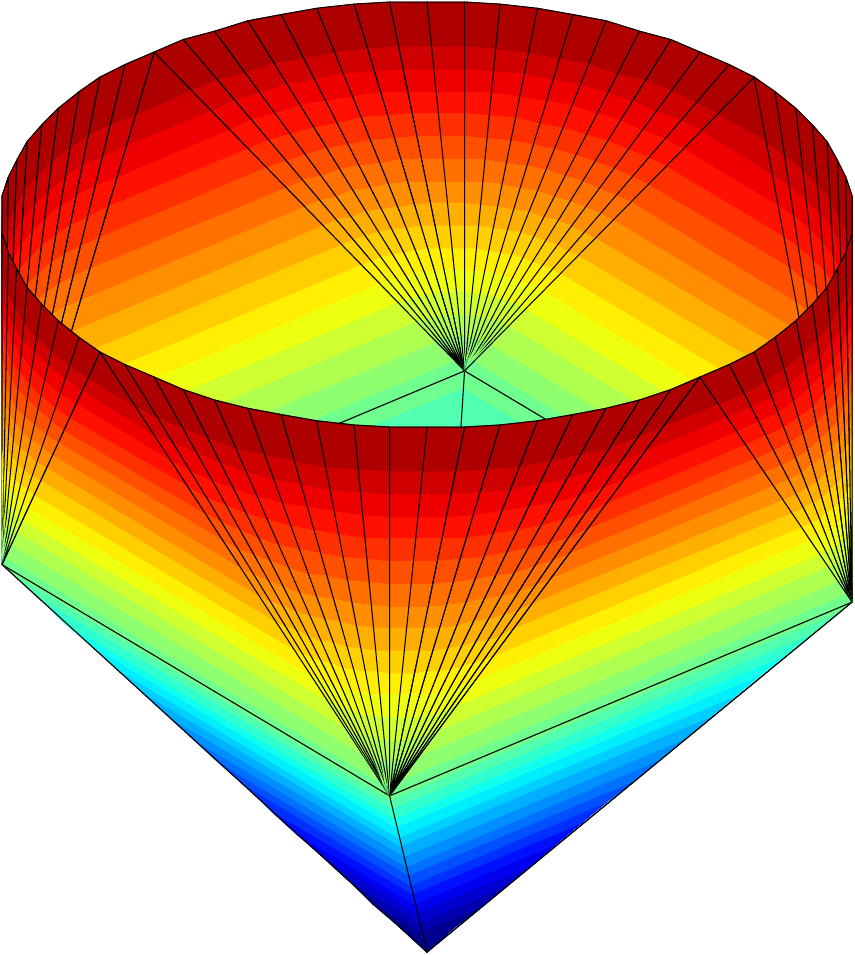}\hspace{2cm}
    \includegraphics[width=6cm]{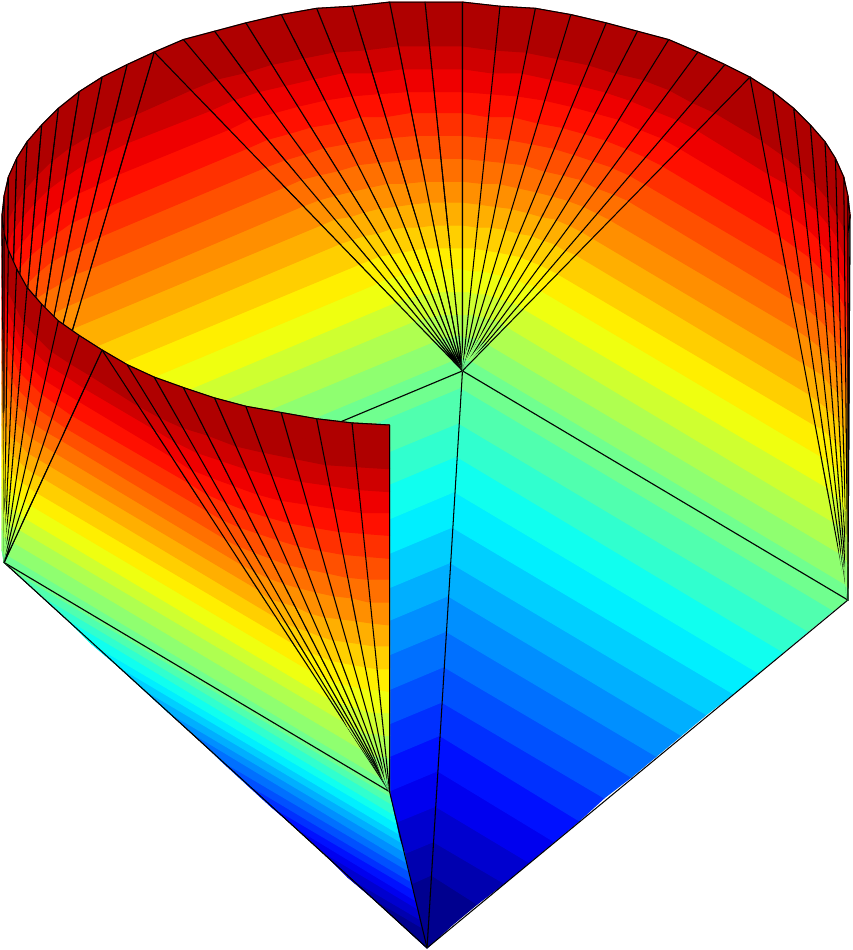}   
  \end{center}
  \caption{\label{fig:graph1}
    Topological closure of the graph, between heights~$z=0$ and $z=2$,
    of the proper convex lsc function~${\cal L}_0$ which coincides, 
    on the Euclidian unit sphere~$\SPHERE$, with the \lzeropseudonorm}
\end{figure}
In dimension~$d=2$, the function~${\cal L}_0$ 
in~\eqref{eq:definition_calL0} is 
given by the following explicit formulas
(see also Figure~\ref{fig:l0-disk}). 

\begin{figure}
  \begin{center}


    \begin{tikzpicture}[scale=6,cap=round]
      \def\unsursqrt2{0.7071}
      \def\sqrt3{1.7321}
      \colorlet{anglecolor}{green!50!black}
      \colorlet{sincolor}{red}
      \colorlet{tancolor}{orange!80!black}
      \colorlet{coscolor}{blue}

      \tikzstyle{axes}=[]
      \tikzstyle{important line}=[very thick]
      \tikzstyle{information text}=[rounded corners,fill=red!10,inner sep=1ex]


      \draw[blue] (0,0) -- (1,0) arc(0:90:1) ;
      
      \begin{scope}[style=axes]
        \draw[->] (-0.1,0) -- (1.2,0) node[right] {$\primal_1$};
        \draw[->] (0,-0.1) -- (0,1.2) node[above] {$\primal_2$};

        \foreach \x/\xtext in { 0.5/\frac{1}{2}, 1}
        \draw[xshift=\x cm] (0pt,1pt) -- (0pt,-1pt) node[below,fill=white] {$\xtext$};

        \foreach \y/\ytext in {0.5/\frac{1}{2}, 1}
        \draw[yshift=\y cm] (1pt,0pt) -- (-1pt,0pt) node[left,fill=white] {$\ytext$};
      \end{scope}


      
      \draw[blue] (0,1) --  (1,0);
      \draw[blue] (0,1) --  (\unsursqrt2,\unsursqrt2);
      \draw[blue] (1,0) --  (\unsursqrt2,\unsursqrt2);

      \draw (1/4,0.93) node[anglecolor] {\eqref{eq:calL0_d=2_inside_nail_up}};
      \draw (0.93,1/4) node[anglecolor] {\eqref{eq:calL0_d=2_inside_nail_down}};
      \draw (1/2+1/8,1/2+1/8) node[anglecolor] {\eqref{eq:calL0_d=2_inside_triangle}};
      \draw (1/4,1/2) node[anglecolor] {\eqref{eq:calL0_d=2_inside_lozenge}};
    \end{tikzpicture}

    \caption{\label{fig:l0-disk}Companion figure for Proposition~\ref{pr:calL0_d=2}}
  \end{center}
\end{figure}

\begin{proposition}
  In dimension~$d=2$, the function~${\cal L}_0$ 
  in~\eqref{eq:definition_calL0} is 
  given by 
  \begin{subequations}
    \begin{numcases}{ {\cal L}_0\np{\primal_1,\primal_2} = }
      + \infty & \text{if } \( \primal_1^2 + \primal_2^2 > 1 \eqsepv \)
      \\
      1 & \text{if } 
      \( \np{\primal_1,\primal_2} \in \{ (1,0), (0,1), (-1,0), (0,-1) \} \eqsepv \)
      \\
      2 & \text{if } \( \primal_1^2 + \primal_2^2 = 1 \mtext{and}
      \np{\primal_1,\primal_2} \not \in \{ (1,0), (0,1), (-1,0), (0,-1) \} \eqsepv \)
    \end{numcases}
    and, for any \( \np{\primal_1,\primal_2} \) such that 
    \( \primal_1^2 + \primal_2^2 < 1 \) by 
    \begin{numcases}{ {\cal L}_0\np{\primal_1,\primal_2} = }
      \module{\primal_1}+\module{\primal_2} 
      & \text{if } 
      \( \module{\primal_1}+\module{\primal_2} \leq 1 \eqfinv \)
      \label{eq:calL0_d=2_inside_lozenge}
      \\ 
      \frac{ \module{\primal_1}+\module{\primal_2} -2 + \sqrt{2} }%
      {\sqrt{2}-1} 
      & 
      \text{if } 
      \( \begin{cases}
        \np{\sqrt{2}-1}\module{\primal_1}+\module{\primal_2}
        < 1 < \module{\primal_1}+\module{\primal_2}
        \\
        \text{and} 
        \\
        \module{\primal_1}+\np{\sqrt{2}-1}\module{\primal_2} 
        < 1 < \module{\primal_1}+\module{\primal_2}  \eqfinv
      \end{cases} \)
      \label{eq:calL0_d=2_inside_triangle}
      \\ 
      \frac{ 3-\module{\primal_2} }{ 2 } +
      \frac{ \primal_1^2 }{ 2 \np{ 1-\module{\primal_2} } } 
      & 
      \text{if } \( 
      \np{\sqrt{2}-1}\module{\primal_1}+\module{\primal_2} \geq 1 
      \text{ and } \module{\primal_2} > \module{\primal_1}  
      \eqfinv \)
      \label{eq:calL0_d=2_inside_nail_up}
      \\ 
      \frac{ 3-\module{\primal_1} }{ 2 } +
      \frac{ \primal_2^2 }{ 2 \np{ 1-\module{\primal_1} } } 
      & 
      \text{ if } \( 
      \module{\primal_1}+\np{\sqrt{2}-1}\module{\primal_2} \geq 1  
      \text{ and } \module{\primal_1} > \module{\primal_2}  
      \eqfinp \)
      \label{eq:calL0_d=2_inside_nail_down}
    \end{numcases}
    \label{eq:calL0_d=2}
  \end{subequations}
  \label{pr:calL0_d=2}
\end{proposition}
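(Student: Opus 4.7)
The plan is to invoke the minimization characterization~\eqref{eq:pseudonormlzero_convex_minimum} of Proposition~\ref{pr:calL0}. In dimension $d=2$ the relevant dual norms simplify dramatically: $\SupportNorm{1}{\cdot}$ is the dual of $\SymmetricGaugeNorm{1}{\cdot}=\norm{\cdot}_\infty$, hence the $\ell^1$ norm, while $\SupportNorm{2}{\cdot}=\SymmetricGaugeNorm{2}{\cdot}=\norm{\cdot}_2$. Thus the formula~\eqref{eq:pseudonormlzero_convex_minimum} reads
\[
{\cal L}_0(\primal)=\min\Ba{\norm{\primal^{(1)}}_1+2\norm{\primal^{(2)}}_2 \,:\, \primal^{(1)}+\primal^{(2)}=\primal,\; \norm{\primal^{(1)}}_1+\norm{\primal^{(2)}}_2\le 1}.
\]
Using~\eqref{eq:calL0_module} together with the invariance of all $2$-$k$-symmetric gauge norms under coordinate swapping (hence the same invariance of ${\cal L}_0$, obtained exactly as in the proof of~\eqref{eq:calL0_module}), I first reduce to the sub-cone $\{(\primal_1,\primal_2):\primal_1\ge \primal_2\ge 0\}$.

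Next I dispose of the easy cases. If $\primal_1^2+\primal_2^2>1$, then the triangle inequality $\norm{\primal}_2\le\norm{\primal^{(1)}}_2+\norm{\primal^{(2)}}_2\le\norm{\primal^{(1)}}_1+\norm{\primal^{(2)}}_2\le 1$ shows the feasible set is empty, hence ${\cal L}_0=+\infty$. On the sphere $\primal_1^2+\primal_2^2=1$, Corollary~\ref{cor:pseudonormlzero_convex} yields ${\cal L}_0(\primal)=\lzero(\primal)$, which is $1$ at the four axis points and $2$ elsewhere. Inside the open lozenge $\primal_1+\primal_2<1$, the feasible choice $\bp{\primal^{(1)},\primal^{(2)}}=\np{\primal,0}$ gives the upper bound $\primal_1+\primal_2$; since the $\ell^1$ norm is convex lsc and lies below $L_0$ on the entire Euclidian unit ball (it equals $1=L_0$ on the lozenge boundary and is at most $\sqrt{2}<2=L_0$ on the disk minus lozenge), the characterization of ${\cal L}_0$ as the largest convex lsc minorant of $L_0$ (second bullet of Proposition~\ref{pr:calL0}) yields the matching lower bound, hence~\eqref{eq:calL0_d=2_inside_lozenge}.

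The substantive step is the remaining annulus between the lozenge and the sphere, which I further split by the two straight lines $(\sqrt{2}-1)\primal_1+\primal_2=1$ and $\primal_1+(\sqrt{2}-1)\primal_2=1$; one checks directly that these lines join the extremal points $(0,1),(1,0)$ to the bisector sphere point $(1/\sqrt{2},1/\sqrt{2})$, so they indeed delimit the triangle and the two nails as drawn in Figure~\ref{fig:l0-disk}. Applying the KKT conditions of Proposition~\ref{pr:KKT_optimization_problem_calL0} to the minimization, the active-constraint patterns correspond precisely to these regions: in the triangle region~\eqref{eq:calL0_d=2_inside_triangle} the optimal $\primal^{(1)}$ concentrates on a single coordinate and one obtains a linear interpolation between a lozenge vertex (with value $1$) and $(1/\sqrt{2},1/\sqrt{2})$ on the sphere (with value $2$), producing the announced affine formula; in the nail region~\eqref{eq:calL0_d=2_inside_nail_up} both pieces are nonzero with $\primal^{(2)}$ parallel to the dominant axis, and after eliminating the multipliers a one-variable quadratic optimization yields the stated expression. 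The companion nail formula~\eqref{eq:calL0_d=2_inside_nail_down} follows by the coordinate swap.

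The main obstacle is the clean KKT bookkeeping: one must identify which of the constraints $\primal^{(1)}_i=0$, $\primal^{(2)}_i=0$, and the budget $\norm{\primal^{(1)}}_1+\norm{\primal^{(2)}}_2\le 1$ are tight in each region, and then check that the resulting four pieces glue into a single convex lsc function, i.e. that the values and outward derivatives agree across the lines $(\sqrt{2}-1)\primal_1+\primal_2=1$ and $\primal_1+(\sqrt{2}-1)\primal_2=1$, and at the boundary of the lozenge.
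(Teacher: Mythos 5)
Your overall strategy coincides with the paper's: reduce to the minimization formula~\eqref{eq:pseudonormlzero_convex_minimum}, observe that in dimension $d=2$ the two support norms are the $\ell^1$ norm and the Euclidian norm, and split the analysis according to the position of $\norm{\primal}$ relative to~$1$. Your treatment of the easy cases is correct and in two places cleaner than the paper's: you evaluate ${\cal L}_0$ on the sphere directly from Corollary~\ref{cor:pseudonormlzero_convex} (where the paper instead analyses the constraint set $\Convex\np{\primal}$ by hand to show it reduces to $\{\np{0,\primal}\}$ or contains $\np{\primal,0}$), and you obtain the lower bound $\SupportNorm{1}{\cdot}\leq{\cal L}_0$ on the lozenge by exhibiting the $\ell^1$ norm as a proper convex lsc minorant of the function~$L_0$ of~\eqref{eq:L_0} and invoking the second bullet of Proposition~\ref{pr:calL0}; both shortcuts are valid and non-circular.

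The gap is in the remaining annulus. For the three formulas \eqref{eq:calL0_d=2_inside_triangle}--\eqref{eq:calL0_d=2_inside_nail_down} you only describe the shape of the computation (``the active-constraint patterns correspond precisely to these regions'', ``a one-variable quadratic optimization yields the stated expression'') and never carry it out. This is precisely the content that the paper isolates in Proposition~\ref{pr:KKT_optimization_problem_calL0_d=2} and its long proof: one must show that the multiplier $\lambda$ is strictly positive off the lozenge, enumerate the possible faces $\FACE_{\SymmetricGauge{1}{\BALL}}\np{\bar\primal^{(1)}}$ according to $\lzero\np{\bar\primal^{(1)}}\in\{1,2\}$, solve the resulting systems for $\bp{\bar\primal^{(1)},\bar\primal^{(2)}}$, and verify that the feasibility conditions of each active pattern are \emph{exactly} the region descriptions involving the lines $\np{\sqrt{2}-1}\module{\primal_1}+\module{\primal_2}=1$ and $\module{\primal_1}+\np{\sqrt{2}-1}\module{\primal_2}=1$ --- none of which is done. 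Moreover the one concrete structural claim you do make about the nail region is the wrong way round: in the regime~\eqref{eq:calL0_d=2_inside_nail_up} it is $\bar\primal^{(1)}$ that is supported on the dominant coordinate axis, while $\bar\primal^{(2)}$ has both components nonzero (see Item~\ref{it:KKT_optimization_problem_calL0_d=2_two_bis} of Proposition~\ref{pr:KKT_optimization_problem_calL0_d=2}), which suggests the bookkeeping has not actually been attempted. As it stands, the proposal proves the statement outside the open disk and on the lozenge, but not the triangle and nail formulas, which are the substantive part of the proposition.
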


\begin{proof}
  By~\eqref{eq:pseudonormlzero_convex_minimum} for $d=2$, we find that 
  \begin{subequations}
    \begin{equation}
      {\cal L}_0\np{\primal}= 
      \min_{ \np{ \primal^{(1)} , \primal^{(2)} } \in \Convex\np{\primal} }
      \EuclidianSupportNorm{1}{\primal^{(1)}} + 2 \EuclidianSupportNorm{2}{\primal^{(2)}} 
      \eqfinv
      \label{eq:calL0_d=2_minimization}
    \end{equation}
    where the constraints set is given by 
    \begin{equation}
      \Convex\np{\primal} = 
      \Bset{ \bp{ \primal^{(1)} , \primal^{(2)} } \in (\RR^2)^2 }%
      { \EuclidianSupportNorm{1}{\primal^{(1)}} + \EuclidianSupportNorm{2}{\primal^{(2)}} \leq 1 
        \eqsepv \primal^{(1)} + \primal^{(2)} = \primal } 
      \eqfinp
      \label{eq:calL0_d=2_constraints_set}
    \end{equation}
  \end{subequations}
  If \( \np{ \primal^{(1)} , \primal^{(2)} } \in \Convex\np{\primal} \), 
  we have that 
  \begin{subequations}
    \begin{align}
      \norm{\primal} 
      & \leq 
        \EuclidianSupportNorm{1}{\primal^{(1)}} + \EuclidianSupportNorm{2}{\primal^{(2)}} 
        \leq 1
        \eqfinv 
        \label{eq:calL0_d=2_constraints_one}
      \\
      \norm{\primal} 
      &= 
        1 \Rightarrow \EuclidianSupportNorm{2}{\primal^{(1)}} 
        = \EuclidianSupportNorm{1}{\primal^{(1)}} 
        \mtext{ and }
        \EuclidianSupportNorm{1}{\primal^{(1)}} + \EuclidianSupportNorm{2}{\primal^{(2)}} =1 
        \eqfinv
        \label{eq:calL0_d=2_constraints_two}
    \end{align}
  \end{subequations}
  \begin{align*}
    \text{because } \norm{\primal}
    &=
      \EuclidianSupportNorm{2}{\primal} 
      \tag{ by~\eqref{eq:dual_support_norm_inequalities} }
    \\
    &\leq 
      \EuclidianSupportNorm{2}{\primal^{(1)}} + \EuclidianSupportNorm{2}{\primal^{(2)}} 
      \tag{ because \( \np{ \primal^{(1)} , \primal^{(2)} } \in \Convex\np{\primal} 
      \Rightarrow \primal^{(1)} + \primal^{(2)} = \primal \) }
    \\
    &\leq 
      \EuclidianSupportNorm{1}{\primal^{(1)}} + \EuclidianSupportNorm{2}{\primal^{(2)}} 
      \tag{ because \( \EuclidianSupportNorm{2}{\primal^{(1)}} \leq 
      \EuclidianSupportNorm{1}{\primal^{(1)}} \)
      by~\eqref{eq:dual_support_norm_inequalities} }
    \\
    &\leq 
      1 
      \tag{ because \( \np{ \primal^{(1)} , \primal^{(2)} } \in \Convex\np{\primal} 
      \Rightarrow \EuclidianSupportNorm{1}{\primal^{(1)}} + \EuclidianSupportNorm{2}{\primal^{(2)}} \leq 1 \) }
      \eqfinp
  \end{align*}
  We are now going to describe the constraints set~\( \Convex\np{\primal} \)
  in~\eqref{eq:calL0_d=2_constraints_set}
  according to~\( \norm{\primal} \), then to deduce~\( {\cal L}_0\np{\primal} \)
  from~\eqref{eq:calL0_d=2_minimization}. 
  \begin{enumerate}
  \item 
    Suppose that \( \norm{\primal} = \sqrt{ \primal_1^2 + \primal_2^2 } > 1 \).
    Then, by~\eqref{eq:calL0_d=2_constraints_one}, 
    we deduce that \( \Convex\np{\primal} =\emptyset \)
    in~\eqref{eq:calL0_d=2_constraints_set}, hence 
    that \( {\cal L}_0\np{\primal} = +\infty \) by~\eqref{eq:calL0_d=2_minimization}. 
  \item 
    Suppose that \( \norm{\primal} = \sqrt{ \primal_1^2 + \primal_2^2 } = 1 \).
    If \( \np{ \primal^{(1)} , \primal^{(2)} } \in \Convex\np{\primal} \), 
    we obtain by~\eqref{eq:calL0_d=2_constraints_two}
    that 
    \[
      \sqrt{ \module{\primal^{(1)}_1}^2 + \module{\primal^{(1)}_2}^2 } 
      = \EuclidianSupportNorm{2}{\primal^{(1)}} 
      = \EuclidianSupportNorm{1}{\primal^{(1)}} 
      = \module{\primal^{(1)}_1}+\module{\primal^{(1)}_2} 
      \eqfinv
    \]
    from which we deduce that
    \( \module{\primal^{(1)}_1}\times\module{\primal^{(1)}_2}=0 \).
    From \( \primal^{(1)} + \primal^{(2)} = \primal \)
    and \( \EuclidianSupportNorm{1}{\primal^{(1)}} + \EuclidianSupportNorm{2}{\primal^{(2)}} =1 \),
    by~\eqref{eq:calL0_d=2_constraints_one}, 
    we deduce that either \( \primal^{(1)}_1=0 \) and 
    \( \module{\primal^{(1)}_2} + \sqrt{ \primal_1^2 + \np{\primal_2 -\primal^{(1)}_2}^2 }  =1 \),
    or \( \primal^{(1)}_2=0 \) and 
    \( \module{\primal^{(1)}_1} + \sqrt{ \np{\primal_1 -\primal^{(1)}_1}^2 + \primal_2^2 }  =1 \),
    that is, after calculations,
    either \( \primal^{(1)}_1=0 \) and 
    \( \module{\primal^{(1)}_2} =\primal_2 \times \primal^{(1)}_2 \), 
    or \( \primal^{(1)}_2=0 \) and 
    \( \module{\primal^{(1)}_1} =\primal_1 \times \primal^{(1)}_1 \).
    Therefore, we have the following two subcases.
    \begin{enumerate}
    \item 
      If \( \primal \not\in \{ (1,0), (0,1), (-1,0), (0,-1) \} \),
      then necessarily \( \np{ \primal^{(1)} , \primal^{(2)} } = \np{ 0, \primal } \),
      that is, \( \Convex\np{\primal} = \{ \np{ 0, \primal } \} \).
      As a consequence, 
      \( {\cal L}_0\np{\primal}= 
      \EuclidianSupportNorm{1}{0} + 2 \EuclidianSupportNorm{2}{\primal} = 2 \norm{\primal}=2\)
      by~\eqref{eq:calL0_d=2_minimization}. 
    \item 
      If \( \primal \in \{ (1,0), (0,1), (-1,0), (0,-1) \} \),
      it is easy to check that \( \np{ \primal, 0 } \in \Convex\np{\primal} \) 
      by~\eqref{eq:calL0_d=2_constraints_set}.
      Therefore, \( {\cal L}_0\np{\primal} \leq \EuclidianSupportNorm{1}{\primal} +
      2\EuclidianSupportNorm{2}{0} = 1 \) by~\eqref{eq:calL0_d=2_minimization}. 
      Now, for any  
      \( \np{ \primal^{(1)} , \primal^{(2)} } \in \Convex\np{\primal} \), 
      we have that 
      \[
        \EuclidianSupportNorm{1}{\primal^{(1)}} + 2 \EuclidianSupportNorm{2}{\primal^{(2)}} 
        \geq
        \EuclidianSupportNorm{1}{\primal^{(1)}} + \EuclidianSupportNorm{2}{\primal^{(2)}} 
        \geq \norm{\primal}=1 
      \]
      by~\eqref{eq:calL0_d=2_constraints_one}.
      To conclude, we obtain that \( 1 \leq {\cal L}_0\np{\primal}  \) 
      by~\eqref{eq:calL0_d=2_minimization}, hence that 
      \( {\cal L}_0\np{\primal} = 1 \).
    \end{enumerate}
  \item 
    Suppose that \( \norm{\primal} = \sqrt{ \primal_1^2 + \primal_2^2 } < 1 \).
    Then, the proof is an application of
    Proposition~\ref{pr:KKT_optimization_problem_calL0_d=2} in
    Appendix~\ref{Appendix},
    combined with the formula~\eqref{eq:calL0_module}.
  \end{enumerate}
  \smallskip

  This ends the proof.
\end{proof}

\section{Conclusion}

In this paper, we have introduced a novel class of 
one-sided linear couplings, and we have shown that they induce conjugacies 
that share nice properties with the classic Fenchel conjugacy.
Among them, we have distinguished a novel coupling, \ECapra, 
having the property of being constant along primal rays, 
like the \lzeropseudonorm.
For the \ECapra\ conjugacy, induced by the coupling \ECapra, 
we have proved that the \lzeropseudonorm\ is equal to its biconjugate:
hence, the \lzeropseudonorm\ is \ECapra-convex 
in the sense of generalized convexity.
We have also provided expressions for 
the \ECapra\ conjugate and biconjugate 
of the \lzeropseudonorm, 
and of the characteristic functions of its level sets,
in terms of the sequence of so-called top-$k$ norms.
Finally, we have shown that the \lzeropseudonorm\ displays
hidden convexity as we have proved that it coincides, 
on the Euclidian unit sphere, with a proper convex lsc function.
This is somewhat surprising as the \lzeropseudonorm\ is a highly
nonconvex function of combinatorial nature.
\bigskip

\appendix

\section{Appendix}
\label{Appendix}

\subsection{Properties of top-$k$ norms and of $k$-support norms}

\begin{subequations}
  Before studying properties of top-$k$ norms and of $k$-support norms,
  we recall the notion of dual norm. 
  Suppose that $\RR^d$ is equipped with a norm~$\TripleNorm{\cdot}$
  with unit ball denoted by \( \BALL_{\TripleNorm{\cdot}}
  =   \defset{\primal \in \RR^d}{\TripleNorm{\primal} \leq 1} \).
  The expression 
  \begin{equation}
    \TripleNorm{\dual}_\star = 
    \sup_{ \TripleNorm{\primal} \leq 1 } \proscal{\primal}{\dual} 
    \eqsepv \forall \dual \in \RR^d
    \label{eq:dual_norm}
  \end{equation}
  defines a norm on~$\RR^d$, 
  called the \emph{dual norm} \( \TripleNorm{\cdot}_\star \).
  We have 
  \begin{equation}
    \TripleNorm{\cdot}_\star = \sigma_{\BALL_{\TripleNorm{\cdot}}} 
    \mtext{ and } 
    \TripleNorm{\cdot} = \sigma_{\BALL_{\TripleNorm{\cdot}_\star}}
    \eqfinv
    \label{eq:norm_dual_norm}
  \end{equation}
  where \( \BALL_{\TripleNorm{\cdot}_\star} \), the unit ball of the dual norm,
  is the polar set~\( \BALL_{\TripleNorm{\cdot}}^{\odot} \) 
  of the unit ball~\( \BALL_{\TripleNorm{\cdot}} \):
  \begin{equation}
    \BALL_{\TripleNorm{\cdot}_\star}
    = \defset{\dual \in \RR^d}{\TripleNorm{\dual}_\star \leq 1}
    =\BALL_{\TripleNorm{\cdot}}^{\odot} 
    = \defset{\dual \in \RR^d}{\proscal{\primal}{\dual} \leq 1 
      \eqsepv \forall \primal \in \BALL_{\TripleNorm{\cdot}} } 
    \eqfinp
    \label{eq:norm_dual_norm_unit_ball}
  \end{equation}
\end{subequations}

\subsubsection{Properties of top-$k$ norms}

For all \( K \subset\ba{1,\ldots,d} \), 
we introduce \emph{degenerate} unit ``spheres'' and ``balls'' of \( \RR^d \),
equipped with the Euclidian norm~$\norm{\cdot}$, by
\begin{subequations}
  \begin{align}
    \ESPHERE_{K} 
    &= 
      \defset{\primal \in \RR^d}{ \norm{\primal_{K}} = 1} 
      \eqfinv 
      \label{eq:ESPHERE_K}
    \\
    \EBALL_{K} 
    &=
      \defset{\primal \in \RR^d}{ \norm{\primal_{K}} \le 1} 
      \eqfinv
      \label{eq:EBALL_K}
  \end{align}
\end{subequations}
where $\primal_{K}$ has been defined as the orthogonal projection of~\( \primal \) onto
the subspace~\( \FlatRR_{K} \) in~\eqref{eq:FlatRR}.
In what follows, the Euclidian unit sphere~$\SPHERE$ and
ball~$\BALL$ have been defined in~\eqref{eq:Euclidian_SPHERE}, and
the top-$k$ norm~\( \EuclidianTopNorm{k}{\cdot} \)  
has been introduced in Definition~\ref{de:symmetric_gauge_norm}.

\begin{proposition}
  Let \( k \in \ba{1,\ldots,d} \).
  \begin{itemize}
  \item 
    For any \( \primal \in \RR^d \), the following equalities and inequalities hold true
    \begin{equation}
      \sup_{ j=1,\ldots,d } \module{\primal_j} = 
      \norm{\primal}_{\infty} =  \EuclidianTopNorm{1}{\primal} \leq \cdots \leq 
      \EuclidianTopNorm{l}{\primal} \leq 
      \EuclidianTopNorm{l+1}{\primal} \leq \cdots \leq 
      \EuclidianTopNorm{d}{\primal} =  \norm{\primal} 
      \eqfinp
      \label{eq:k_approx_props}
    \end{equation}
  \item 
    We have 
    \begin{equation}
      \FlatRR_{K} \cap \SPHERE  = \ESPHERE_{K} \cap \SPHERE  
      \eqsepv \forall K \subset \ba{1,\ldots,d} 
      \eqfinp
      \label{eq:sphere_and_esphere_K}
    \end{equation}
  \item 
    The top-$k$ norm~\( \EuclidianTopNorm{k}{\cdot} \) 
    satisfies
    \begin{equation}
      \EuclidianTopNorm{k}{\cdot} = 
      \sigma_{ \cup_{ \cardinal{K} \leq k} \np{ \FlatRR_{K} \cap \BALL } }
      = \sup_{ \cardinal{K} \leq k } \sigma_{ \np{ \FlatRR_{K} \cap \BALL } }
      = \sup_{ \cardinal{K} \leq k } \sigma_{ \np{ \FlatRR_{K} \cap \SPHERE }  }
      = \sigma_{ \cup_{ \cardinal{K} \leq k} \np{ \FlatRR_{K} \cap \SPHERE }  } 
      \eqfinp
      \label{eq:k-norm-from-support}
    \end{equation}
  \item 
    The unit sphere~\( \EuclidianTop{k}{\SPHERE} \) 
    and ball~\( \EuclidianTop{k}{\BALL} \)
    of \( \RR^d \) for the top-$k$ norm
    \( \EuclidianTopNorm{k}{\cdot} \) 
    satisfy 
    \begin{subequations}
      \begin{align}
        \EuclidianTop{k}{\BALL} &= 
                                  \defset{\primal \in \RR^d}{ \EuclidianTopNorm{k}{\primal} \leq 1} 
                                  = \bigcap_{\cardinal{K} \leq k} \EBALL_{K}
                                  \eqfinv    
                                  \label{eq:symmetric_gauge_norm_unit-ball} 
        \\
        \EuclidianTop{k}{\SPHERE} 
                                &= 
                                  \defset{\primal \in \RR^d}{ \EuclidianTopNorm{k}{\primal} = 1} 
                                  = \EuclidianTop{k}{\BALL} \cap
                                  \Bp{ \bigcup_{\cardinal{K} \leq k} \ESPHERE_{K} }
                                  \eqfinp
                                  \label{eq:symmetric_gauge_norm_unit-sphere}
      \end{align}
    \end{subequations}
  \item 
    The unit balls~\( \EuclidianTop{l}{\BALL} \) satisfy the inclusions
    \begin{equation}
      \BALL=\EuclidianTop{d}{\BALL}
      \subset \cdots \subset \EuclidianTop{l+1}{\BALL} 
      \subset \EuclidianTop{l}{\BALL} \subset \cdots 
      \subset \EuclidianTop{1}{\BALL} 
      \eqfinp 
      \label{eq:symmetric_gauge_norm_unit-balls_inclusions}
    \end{equation}
  \item
    We have
    \begin{equation}
      \EuclidianTopNorm{k}{\dual} \leq \sqrt{k} 
      \EuclidianTopNorm{1}{\dual} 
      \eqsepv
      \forall \dual \in \RR^d 
      \eqsepv
      \forall k=1, \ldots, d 
      \eqfinp 
      \label{eq:symmetric_gauge_norm_norms_relation}
    \end{equation}
  \end{itemize}
\end{proposition}

\begin{proof} \quad

  \noindent $\bullet$ 
  The Equalities and Inequalities~\eqref{eq:k_approx_props} derive 
  from the very definition~\eqref{eq:symmetric_gauge_norm}
  of the top-$k$ norm~\( \EuclidianTopNorm{k}{\cdot} \).
  \smallskip

  \noindent $\bullet$ 
  We prove Equation~\eqref{eq:sphere_and_esphere_K}.
  We have that \( \primal=\primal_K+\primal_{-K} \),
  for any \( \primal\in\RR^d \),  
  and the decomposition is orthogonal, leading to
  \begin{equation}
    \bp{\forall \primal \in \RR^d } \qquad
    \primal=\primal_K+\primal_{-K} 
    \eqsepv \primal_K \perp \primal_{-K} 
    \mtext{ and }
    \norm{\primal}^2=\norm{\primal_K}^2+\norm{\primal_{-K}}^2
    \eqfinp
    \label{eq:decomposition_orthogonal}
  \end{equation}
  For \( K \subset \ba{1,\ldots,d} \), we have that 
  \begin{align*}
    \primal \in \SPHERE \mtext{ and } 
    \primal \in \ESPHERE_{K}  
    &\iff 
      1=\norm{\primal}^2 \mtext{ and } 1=\norm{\primal_K}^2 
      \tag{ by~\eqref{eq:Euclidian_SPHERE} and \eqref{eq:ESPHERE_K} }
    \\
    &\iff 
      1=\norm{\primal}^2=\norm{\primal_K}^2+\norm{\primal_{-K}}^2
      \mtext{ and } 1=\norm{\primal_K}^2
      \tag{ by~\eqref{eq:decomposition_orthogonal} }
    \\
    &\iff 
      \norm{\primal_{-K}}=0 \mtext{ and } 1=\norm{\primal_K} 
      \tag{ by~\eqref{eq:decomposition_orthogonal} }
    \\
    &\iff 
      \primal \in \FlatRR_{K} \cap \SPHERE
      \tag{ by~\eqref{eq:FlatRR} and \eqref{eq:Euclidian_SPHERE} }
      \eqfinp 
  \end{align*}
  \smallskip

  \noindent $\bullet$ 
  We prove Equation~\eqref{eq:k-norm-from-support}. 
  For this purpose, we first establish that
  \begin{equation}
    \sigma_{ \FlatRR_{K} \cap \BALL }(\dual) 
    =\norm{\dual_K} 
    \eqsepv \forall \dual \in \RR^d 
    \eqfinp   
    \label{eq:support_functions_unit_sphere_ball_K}
  \end{equation}
  Indeed, for \( \dual \in \RR^d \), we have
  \begin{align*}
    \sigma_{ \FlatRR_{K} \cap \BALL }(\dual)
    &=
      \sup_{ \primal \in \FlatRR_{K} \cap \BALL } \proscal{\primal}{\dual}
      \tag{ by definition~\eqref{eq:support_function} of a support function }
    \\
    &=
      \sup_{ \primal \in \FlatRR_{K} \cap \BALL } 
      \proscal{\primal_K+\primal_{-K}}{\dual_K+\dual_{-K}}
      \tag{ by the decomposition~\eqref{eq:decomposition_orthogonal} }
    \\
    &=
      \sup_{ \primal \in \FlatRR_{K} \cap \BALL } 
      \bp{ \proscal{\primal_K}{\dual_K}
      + \proscal{\primal_{-K}}{\dual_{-K}} }
      \tag{ because \( \primal_K \perp \dual_{-K} \) and
      \( \primal_{-K} \perp \dual_K \) by~\eqref{eq:decomposition_orthogonal} }
    \\
    &=
      \sup \ba{  \proscal{\primal_K}{\dual_K}
      + \proscal{\primal_{-K}}{\dual_{-K}}\, \vert \,
      \primal_{-K}=0 \mtext{ and } \norm{\primal_{K}} \leq 1 } 
      \tag{ by definition of \( \FlatRR_{K} \cap \BALL \) }
    \\
    &=
      \sup \ba{  \proscal{\primal_K}{\dual_K} \, \vert \;
      \norm{\primal_{K}} \leq 1 } 
    \\
    &=
      \norm{\dual_K} 
  \end{align*}
  as is well-known for the Euclidian norm~$\norm{\cdot}$, when restricted 
  to the subspace~\( \FlatRR_{K} \) 
  (because it is equal to its dual norm). 
  Then, for all $\dual \in \RR^d$, we have that 
  \begin{align*}
    \sigma_{ \cup_{ \cardinal{K} \leq k } \FlatRR_{K} \cap \BALL }(\dual)
    &= 
      \sup_{\cardinal{K} \leq k} \sigma_{ \FlatRR_{K} \cap \BALL }(\dual) 
      \tag{ as the support function turns a union of sets into a supremum }
    \\
    &= 
      \sup_{\cardinal{K} \leq k} \norm{\dual_K}
      \tag{by~\eqref{eq:support_functions_unit_sphere_ball_K} }
    \\
    & =\EuclidianTopNorm{k}{\dual} 
      \tag{by definition~\eqref{eq:symmetric_gauge_norm}
      of \( \EuclidianTopNorm{k}{\cdot} \) }
      \eqfinp
  \end{align*}
  Now, by~\eqref{eq:Euclidian_SPHERE} and \eqref{eq:FlatRR},
  it is straightforward that 
  \( \closedconvexhull\np{ \FlatRR_{K} \cap \SPHERE } = \FlatRR_{K} \cap \BALL \)
  and we deduce that 
  \[
    \EuclidianTopNorm{k}{\cdot} 
    =
    \sigma_{ \cup_{ \cardinal{K} \leq k } \np{ \FlatRR_{K} \cap \BALL } }
    =
    \sup_{\cardinal{K} \leq k} \sigma_{ \np{ \FlatRR_{K} \cap \BALL } }
    =
    \sup_{\cardinal{K} \leq k} 
    \sigma_{ \closedconvexhull\np{\np{ \FlatRR_{K} \cap \SPHERE } } }
    =
    \sup_{\cardinal{K} \leq k} \sigma_{ \np{ \FlatRR_{K} \cap \SPHERE }  }
    =
    \sigma_{ \cup_{ \cardinal{K} \leq k } \np{ \FlatRR_{K} \cap \SPHERE }  }
    \eqfinv
  \]
  giving Equation~\eqref{eq:k-norm-from-support}. 

  \smallskip

  \noindent $\bullet$ 
  We prove Equation~\eqref{eq:symmetric_gauge_norm_unit-ball}:
  \begin{align*}
    \EuclidianTop{k}{\BALL}  
    &= 
      \defset{\primal \in \RR^d}{ \EuclidianTopNorm{k}{\primal} \leq 1} 
      \tag{ by definition of the ball \( \EuclidianTop{k}{\BALL} \) }
    \\
    &= 
      \defset{\primal \in \RR^d}{ \sup_{ 
      \cardinal{K} \leq k}  
      \norm{\primal_K} \leq 1} 
      \tag{ by definition~\eqref{eq:symmetric_gauge_norm}
      of \( \EuclidianTopNorm{k}{\cdot} \) }
    \\
    &= 
      \bigcap_{ 
      \cardinal{K} \leq k} 
      \defset{\primal \in \RR^d}{ \norm{\primal_K} \leq 1} 
      =
      \bigcap_{ 
      \cardinal{K} \leq k}
      \EBALL_{K}
      \eqfinp
      \tag{ by definition~\eqref{eq:EBALL_K} of \( \EBALL_{K} \) }
  \end{align*}
  \smallskip

  \noindent $\bullet$ 
  We prove Equation~\eqref{eq:symmetric_gauge_norm_unit-sphere}: 
  \begin{align*}
    \EuclidianTop{k}{\SPHERE}  
    &= \defset{\primal \in \RR^d}{ \EuclidianTopNorm{k}{\primal} = 1} 
      \tag{ by definition of the unit sphere $\EuclidianTop{k}{\SPHERE}$ }
    \\
    &= \defset{\primal \in \RR^d}{ \sup_{ 
      \cardinal{K} \leq k}  \norm{\primal_K} = 1} 
      \tag{ by definition~\eqref{eq:symmetric_gauge_norm}
      of \( \EuclidianTopNorm{k}{\cdot} \) }
    \\
    &= \defset{\primal \in \RR^d}%
      { \sup_{ 
      \cardinal{K} \leq k}  \norm{\primal_K} \le 1}\\
    & \hphantom{===} \bigcap 
      \defset{\primal \in \RR^d}%
      { \exists K \subset \ba{1,\ldots,d} \eqsepv \cardinal{K} \leq k 
      \eqsepv \norm{\primal_K} = 1 }  
      \intertext{\quad} 
    &= \EuclidianTop{k}{\BALL} \cap 
      \Bp{\bigcup_{ 
      \cardinal{K} \leq k} 
      \defset{\primal \in \RR^d}{ \norm{\primal_K} = 1}} 
      \tag{ by definition of the ball \( \EuclidianTop{k}{\BALL} \) }
    \\
    &= \EuclidianTop{k}{\BALL} \cap 
      \Bp{
      \bigcup_{ 
      \cardinal{K} \leq k} 
      \ESPHERE_{K}}
      \eqfinp
      \tag{ by definition~\eqref{eq:ESPHERE_K} of \( \ESPHERE_{K} \) }
  \end{align*}
  \smallskip

  \noindent $\bullet$ 
  The inclusions~\eqref{eq:symmetric_gauge_norm_unit-balls_inclusions}
  directly follow from the Equalities and Inequalities~\eqref{eq:k_approx_props}.
  \smallskip

  \noindent $\bullet$
  We prove the Inequality~\eqref{eq:symmetric_gauge_norm_norms_relation}.
  Indeed, by definition~\eqref{eq:symmetric_gauge_norm}
  of \( \EuclidianTopNorm{k}{\cdot} \),
  for a given $\dual \in \RR^d$, there exists 
  \( K \subset \ba{1,\ldots,d} \) with \( \cardinal{K} \leq k \) 
  such that 
  \( \bp{ \EuclidianTopNorm{k}{\dual} }^2 =
  \sum_{k \in K} \module{\dual_k}^2 \leq
  \sum_{k \in K} \bp{ \EuclidianTopNorm{1}{\dual} }^2 
  \leq k \bp{ \EuclidianTopNorm{1}{\dual} }^2 \). 
  \smallskip

  This ends the proof.
\end{proof}

\subsubsection{Properties of $k$-support norms}

The $k$-support norm~\( \EuclidianSupportNorm{k}{\cdot} \)
has been introduced in Definition~\ref{de:symmetric_gauge_norm}
as the dual norm of the top-$k$ norm~\( \EuclidianTopNorm{k}{\cdot} \). 

\begin{proposition}
  Let \( k \in \ba{1,\ldots,d} \).
  \begin{itemize}
  \item 
    The unit balls~\( \EuclidianSupport{l}{\BALL} \) satisfy the inclusions
    \begin{equation}
      \EuclidianSupport{1}{\BALL} 
      \subset \cdots \subset
      \EuclidianSupport{l}{\BALL} \subset \EuclidianSupport{l+1}{\BALL} 
      \subset \cdots \subset \EuclidianSupport{d}{\BALL} = \BALL 
      \eqfinp 
      \label{eq:dual_support_norm_unit-balls_inclusions}
    \end{equation}
  \item 
    For any \( \primal\in\RR^d \), the following equalities and inequalities hold true
    \begin{equation}
      \norm{\primal} = \EuclidianSupportNorm{d}{\primal}
      \leq \cdots \leq 
      \EuclidianSupportNorm{l+1}{\primal}\leq 
      \EuclidianSupportNorm{l}{\primal}
      \leq \cdots \leq 
      \EuclidianSupportNorm{1}{\primal} = \sum_{ j=1 }^{d } \module{\primal_j} 
      \eqfinp
      \label{eq:dual_support_norm_inequalities}
    \end{equation}
  \item 
    The unit ball~\( \EuclidianSupport{k}{\BALL} \) of the 
    $k$-support norm~\( \EuclidianSupportNorm{k}{\cdot} \)
    satisfies
    \begin{equation}
      \EuclidianSupport{k}{\BALL} = 
      \defset{\primal \in \RR^d}{ \EuclidianSupportNorm{k}{\primal} \leq 1} 
      = \closedconvexhull\bp{ \bigcup_{ \cardinal{K} \leq k} \np{ \FlatRR_{K} \cap \BALL } }
      = \closedconvexhull\bp{ \bigcup_{ \cardinal{K} \leq k} \np{ \FlatRR_{K} \cap \SPHERE }  }
      \eqfinp
      \label{eq:dual_support_norm_unit_ball}
    \end{equation}
  \item 
    For \(l=0,1,\ldots,d \), we have 
    \begin{equation}
      \EuclidianTopNorm{l}{\cdot} = \sigma_{ \EuclidianSupport{l}{\BALL} } 
      \eqfinp
      \label{eq:symmetric_gauge_norm_egale_dual_norm}
    \end{equation}
  \end{itemize}
\end{proposition}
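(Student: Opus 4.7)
The plan is to derive all four items from the duality relation~\eqref{eq:support_norm} between $\SupportNorm{k}{\cdot}$ and $\SymmetricGaugeNorm{k}{\cdot}$, combined with the earlier identities~\eqref{eq:norm_dual_norm}, \eqref{eq:norm_dual_norm_unit_ball}, \eqref{eq:k_approx_props} and \eqref{eq:k-norm-from-support}. The overall strategy is: first establish item~4 (it is essentially a direct application of the bipolar/bidual-norm identity), then use it to deduce item~3 by identifying a support function with a closed convex hull, and finally obtain items~1 and~2 from item~3 by monotonicity and by evaluation at the two endpoints $l=1$ and $l=d$.

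For item~4, the argument is short. By Definition~\ref{de:dual_norm}, the norm $\SymmetricGaugeNorm{l}{\cdot}$ is the dual norm of~$\SupportNorm{l}{\cdot}$ (the bidual of a norm equals the norm itself, as recorded in~\eqref{eq:norm_dual_norm}). Applying~\eqref{eq:norm_dual_norm} to the norm $\SupportNorm{l}{\cdot}$, whose unit ball is by definition $\SupportBall{l}{\BALL}$, yields $\SymmetricGaugeNorm{l}{\cdot} = \sigma_{\SupportBall{l}{\BALL}}$. The degenerate cases $l=0$ and $l=d$ are handled separately: for $l=0$ the convention $\SymmetricGaugeNorm{0}{\cdot}=0$ matches $\sigma_{\{0\}}$ with $\SupportBall{0}{\BALL}=\{0\}$, and for $l=d$ one uses $\SymmetricGaugeNorm{d}{\cdot}=\norm{\cdot}$ together with $\SupportBall{d}{\BALL}=\BALL$ (self-duality of the Euclidian norm).

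For item~3, I combine~\eqref{eq:k-norm-from-support} with item~4. On the one hand $\SymmetricGaugeNorm{k}{\cdot} = \sigma_{\SupportBall{k}{\BALL}}$, and on the other hand $\SymmetricGaugeNorm{k}{\cdot} = \sigma_{\cup_{|K|\leq k}\BALL_K}=\sigma_{\cup_{|K|\leq k}\SPHERE_K}$. Since two closed convex sets containing the origin that have the same support function are equal (this is the standard support-function-to-closed-convex-hull correspondence used throughout the paper), and since $\SupportBall{k}{\BALL}$ is a closed convex set containing~$0$, we conclude $\SupportBall{k}{\BALL}=\closedconvexhull\bp{\cup_{|K|\leq k}\BALL_K}=\closedconvexhull\bp{\cup_{|K|\leq k}\SPHERE_K}$. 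The substitution of $|K|\leq k$ by $|K|=k$ is automatic from the corresponding statement in~\eqref{eq:k-norm-from-support}.

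For items~1 and~2, I proceed by duality. The inclusions in~\eqref{eq:symmetric_gauge_norm_unit-balls_inclusions} for the $2$-$k$-symmetric gauge balls reverse under the bipolar operation (monotonicity of polarity): $\Convex\subset\Convex'$ implies $\Convex'^{\odot}\subset\Convex^{\odot}$. Combined with~\eqref{eq:norm_dual_norm_unit_ball} applied to $\SymmetricGaugeNorm{l}{\cdot}$, whose polar unit ball equals $\SupportBall{l}{\BALL}$, this yields the inclusions~\eqref{eq:dual_support_norm_unit-balls_inclusions}, with endpoint $\SupportBall{d}{\BALL}=\BALL$ because the Euclidian norm is self-dual. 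The inequalities~\eqref{eq:dual_support_norm_inequalities} then follow from these inclusions via the fact that a smaller unit ball yields a larger norm; the endpoints are $\SupportNorm{d}{\cdot}=\norm{\cdot}$ (self-duality) and $\SupportNorm{1}{\cdot}=\sum_j|\primal_j|=\norm{\cdot}_1$, which comes from~\eqref{eq:k_approx_props} giving $\SymmetricGaugeNorm{1}{\cdot}=\norm{\cdot}_\infty$, whose dual norm is $\norm{\cdot}_1$. No step is a real obstacle; the only point that requires care is bookkeeping the two conventions ($\SymmetricGaugeNorm{0}{\cdot}=0$ and $\SupportBall{0}{\BALL}=\{0\}$) consistently across items~3 and~4.
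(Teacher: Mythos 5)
Your proof is correct and follows essentially the same route as the paper's: item~4 via the bidual-norm identity~\eqref{eq:norm_dual_norm}, item~3 by matching support functions using~\eqref{eq:k-norm-from-support} and the closedness and convexity of the unit ball, and items~1--2 by polarity reversing the inclusions~\eqref{eq:symmetric_gauge_norm_unit-balls_inclusions} together with~\eqref{eq:norm_dual_norm_unit_ball}. The only difference is the order in which the items are treated, which is immaterial since no circularity is introduced.
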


\begin{proof} \quad
  
  \noindent $\bullet$ 
  The inclusions~\eqref{eq:dual_support_norm_unit-balls_inclusions}
  directly follow from the
  inclusions~\eqref{eq:symmetric_gauge_norm_unit-balls_inclusions}
  and from~\eqref{eq:norm_dual_norm_unit_ball}
  as \( \EuclidianSupport{k}{\BALL} = \bp{\EuclidianTop{k}{\BALL}}^{\odot} \).
  \smallskip

  \noindent $\bullet$ 
  The Inequalities in~\eqref{eq:dual_support_norm_inequalities} derive 
  from the inclusions~\eqref{eq:dual_support_norm_unit-balls_inclusions}.
  The Equalities in~\eqref{eq:dual_support_norm_inequalities} are well-known.
  \smallskip

  \noindent $\bullet$ 
  We prove Equation~\eqref{eq:dual_support_norm_unit_ball}.
  On the one hand, by the first relation in~\eqref{eq:norm_dual_norm},
  we have that \( \EuclidianTopNorm{k}{\cdot}=\sigma_{ \EuclidianSupport{k}{\BALL} } \).
  On the other hand, 
  by~\eqref{eq:k-norm-from-support}, we have that 
  \( \EuclidianTopNorm{k}{\cdot}
  =
  \sigma_{ \cup_{\cardinal{K} \leq k} \np{ \FlatRR_{K} \cap \BALL } } 
  =
  \sigma_{ \cup_{\cardinal{K} \leq k} \np{ \FlatRR_{K} \cap \SPHERE }  } \).
  Then, as is well-known in convex analysis, we deduce that
  \( 
  \closedconvexhull\bp{ \EuclidianSupport{k}{\BALL} } 
  =
  \closedconvexhull\bp{ \bigcup_{ \cardinal{K} \leq k} \np{ \FlatRR_{K} \cap \BALL } }
  =
  \closedconvexhull\bp{ \bigcup_{ \cardinal{K} \leq k} \np{ \FlatRR_{K} \cap \SPHERE }  }
  \).
  As the unit ball~\( \EuclidianSupport{k}{\BALL} \) is closed and convex,
  we immediately obtain~\eqref{eq:dual_support_norm_unit_ball}.
  \smallskip

  \noindent $\bullet$ 
  We prove Equation~\eqref{eq:symmetric_gauge_norm_egale_dual_norm}.
  By Definition~\ref{de:symmetric_gauge_norm}, the $l$-support norm
  is the dual norm of the top-$l$ norm.
  Therefore, the top-$l$ norm is the dual norm 
  of the $l$-support norm and~\eqref{eq:symmetric_gauge_norm_egale_dual_norm}
  follows from~\eqref{eq:norm_dual_norm} for \(l=1,\ldots,d \).
  For \(l=0\), both conventions \( \EuclidianTopNorm{0}{\cdot} = 0 \)
  and \( \EuclidianSupport{0}{\BALL}=\{0\} \) lead to 
  \( \EuclidianTopNorm{0}{\cdot} = 0 =
  \sigma_{0} = \sigma_{ \EuclidianSupport{l}{\BALL} } \).
  \smallskip

  This ends the proof.
\end{proof}

\subsection{Properties of the level sets of the \lzeropseudonorm}

We establish useful connections between the \lzeropseudonorm\ in~\eqref{eq:pseudo_norm_l0} 
and the top-$k$ norm
\( \EuclidianTopNorm{k}{\cdot} \) in~\eqref{eq:symmetric_gauge_norm}.

\begin{proposition}
  Let \( k \in \ba{0,1,\ldots,d} \).
  We have
  \begin{subequations}
    \begin{align}
      \bp{\forall \primal \in \RR^d } \quad
      & 
        \lzero\np{\primal} = k
        \iff 
        0 
        \leq \cdots \leq 
        \EuclidianTopNorm{k-1}{\primal} < 
        \EuclidianTopNorm{k}{\primal} = 
        \cdots =
        \EuclidianTopNorm{d}{\primal} =  \norm{\primal} 
        \eqfinv
        \label{eq:level_curve_l0_characterization}
      \\
      \bp{\forall \primal \in \RR^d } \quad
      & 
        \primal \in \LevelSet{\lzero}{k} 
        \iff
        \lzero\np{\primal} \leq k
        \iff 
        \EuclidianTopNorm{k}{\primal} = \norm{\primal} 
        \eqfinv
        \label{equiv_norm0}
      \\
      \bp{\forall \primal \in \RR^d } \quad
      & 
        \primal \in \LevelSet{\lzero}{k}\backslash\{0\}
        \iff
        0<\lzero\np{\primal} \leq k \iff
        \primal\neq 0 \mtext{ and }
        \frac{\primal}{\norm{\primal}} \in \SPHERE \cap \EuclidianTop{k}{\SPHERE}  
        \eqfinp
        \label{equiv_norm0_bis}
    \end{align}
  \end{subequations}
  The intersection of the level set \( \LevelSet{\lzero}{k} \) 
  in~\eqref{eq:level_set_pseudonormlzero}
  of the \lzeropseudonorm\ in~\eqref{eq:pseudo_norm_l0} 
  with the Euclidian unit sphere~$\SPHERE$ has the two 
  following  expressions
  \begin{subequations}
    \begin{align}
      \LevelSet{\lzero}{k} \cap \SPHERE 
        &=
          \EuclidianSupport{k}{\BALL} \cap \SPHERE 
          \eqfinv
          \label{eq:level_set_l0_inter_sphere_b}
      \\
      \LevelSet{\lzero}{k} \cap \SPHERE 
        &=
          \overline{ \LevelCurve{\lzero}{k} \cap \SPHERE }
          \eqfinp
          \label{eq:closure_level_curve}
    \end{align}
  \end{subequations}
\end{proposition}

\begin{proof} \quad

  \noindent $\bullet$ 
  The Equivalences~\eqref{eq:level_curve_l0_characterization} and
  \eqref{equiv_norm0} 
  are well-known and easy to prove. 
  \smallskip

  \noindent $\bullet$ 
  We prove the Equivalence~\eqref{equiv_norm0_bis}.
  Indeed, using Equation~\eqref{equiv_norm0} we have that,
  for \( \primal \in \RR^d\backslash\{0\} \):
  \begin{align*}
    \lzero\np{\primal} \leq k 
    &  \iff \EuclidianTopNorm{k}{\primal} = \norm{\primal} 
      \iff \EuclidianTopNorm{k}{\frac{\primal}{\norm{\primal}}} = 1 
      \iff \frac{\primal}{\norm{\primal}} \in \EuclidianTop{k}{\SPHERE} 
      \iff \frac{\primal}{\norm{\primal}} \in 
      \SPHERE \cap \EuclidianTop{k}{\SPHERE}  
      \eqfinp
  \end{align*}
  \smallskip

  \noindent $\bullet$ 
  We prove Equation~\eqref{eq:level_set_l0_inter_sphere_b}.
  First, we observe that the level set \( \LevelSet{\lzero}{k} \) is closed
  because, by~\eqref{equiv_norm0}, it can be expressed as 
  \( \LevelSet{\lzero}{k} = \defset{ \primal\in\RR^d }%
  { \EuclidianTopNorm{k}{\primal} = \norm{\primal} } \).
  This also follows from the well-known property that 
  the pseudonorm~$\lzero$ is lower semi continuous.
  Second, we have
  \begin{align*}
    \LevelSet{\lzero}{k} \cap \SPHERE 
    &= \SPHERE \cap 
      \closedconvexhull\bp{\LevelSet{\lzero}{k} \cap \SPHERE} 
      \tag{by Lemma~\ref{lemma:convex_env} since 
      \( \LevelSet{\lzero}{k} \cap \SPHERE \subset \SPHERE \) 
      and is closed }
    \\
    &= \SPHERE \cap 
      \closedconvexhull\bp{ \bigcup_{ {\cardinal{K} \leq k}} \np{ \FlatRR_{K} \cap \SPHERE }  }
      \tag{ as \( \LevelSet{\lzero}{k} \cap \SPHERE =
      \bigcup_{ {\cardinal{K} \leq k}} \np{ \FlatRR_{K} \cap \SPHERE }  \) 
      by~\eqref{eq:level_set_pseudonormlzero} }
    \\
    &= \EuclidianSupport{k}{\BALL} \cap \SPHERE 
      \tag{ as 
      \( \closedconvexhull\bp{ \bigcup_{ {\cardinal{K} \leq k}} \np{ \FlatRR_{K} \cap \SPHERE } }
      = \EuclidianSupport{k}{\BALL} \) 
      by~\eqref{eq:dual_support_norm_unit_ball} }
      \eqfinp
  \end{align*}
  \smallskip

  \noindent $\bullet$ 
  We prove Equation~\eqref{eq:closure_level_curve}.
  For this purpose, we first establish the (known) fact that 
  \( \overline{ \LevelCurve{\lzero}{k} } = \LevelSet{\lzero}{k} \). 
  The inclusion \( \overline{ \LevelCurve{\lzero}{k} } 
  \subset \LevelSet{\lzero}{k} \) is easy.
  Indeed, as we have seen that \( \LevelSet{\lzero}{k} \) is closed, 
  we have \( \LevelCurve{\lzero}{k} \subset \LevelSet{\lzero}{k} 
  \Rightarrow
  \overline{ \LevelCurve{\lzero}{k} } \subset 
  \overline{ \LevelSet{\lzero}{k} } = \LevelSet{\lzero}{k} \).
  There remains to prove the reverse inclusion
  \( \LevelSet{\lzero}{k} \subset \overline{ \LevelCurve{\lzero}{k} } \).
  For this purpose, we consider
  \( \primal \in \LevelSet{\lzero}{k} \). 
  If \( \primal \in \LevelCurve{\lzero}{k} \), obviously 
  \( \primal \in \overline{ \LevelCurve{\lzero}{k} } \).
  Therefore, we suppose that \( \lzero\np{\primal}=l < k \).
  By definition of \( \lzero\np{\primal} \), there exists 
  \( L \subset \ba{1,\ldots,d} \) such that 
  \( \cardinal{L}=l < k \) and \( \primal = \primal_L \).
  For \( \epsilon > 0 \), define \( \primal^\epsilon \) as
  coinciding with  \( \primal \) except for 
  $k-l$ indices outside~$L$ for which the components are 
  \( \epsilon > 0 \).
  By construction \( \lzero\np{\primal^\epsilon}=k \) and
  \( \primal^\epsilon \to \primal \) when \( \epsilon \to 0 \).
  This proves that 
  \( \LevelSet{\lzero}{k} \subset \overline{ \LevelCurve{\lzero}{k} } \).

  Second, we prove that \( \LevelSet{\lzero}{k} \cap \SPHERE 
  = \overline{ \LevelCurve{\lzero}{k} \cap \SPHERE } \).
  The inclusion 
  \( \overline{ \LevelCurve{\lzero}{k} \cap \SPHERE } 
  \subset \LevelSet{\lzero}{k} \cap \SPHERE \),
  is easy.
  Indeed, 
  \( \overline{ \LevelCurve{\lzero}{k} } = \LevelSet{\lzero}{k} 
  \Rightarrow
  \overline{ \LevelCurve{\lzero}{k} \cap \SPHERE } \subset 
  \overline{ \SPHERE } \cap \overline{ \LevelCurve{\lzero}{k} } = 
  \LevelSet{\lzero}{k} \cap \SPHERE \).
  To prove the reverse inclusion
  \( \LevelSet{\lzero}{k} \cap \SPHERE 
  \subset \overline{ \LevelCurve{\lzero}{k} \cap \SPHERE } \),
  we consider \( \primal \in \LevelSet{\lzero}{k} \cap \SPHERE \).
  As we have just seen that \( \LevelSet{\lzero}{k} = 
  \overline{ \LevelCurve{\lzero}{k} }\), 
  we deduce that \( \primal \in \overline{ \LevelCurve{\lzero}{k} }\).
  Therefore, there exists a sequence
  \( \sequence{z_n}{n\in\NN} \) in \( \LevelCurve{\lzero}{k} \)
  such that \( z_n \to \primal \) when \( n \to +\infty \).
  Since \( \primal \in \SPHERE \), we can always suppose that 
  \( z_n \neq 0 \), for all $n\in\NN$. Therefore \( z_n/\norm{z_n} \) is well
  defined and, when \( n \to +\infty \), 
  we have \( z_n/\norm{z_n} \to \primal/\norm{\primal}=\primal \)
  since \( \primal \in \SPHERE = \defset{\primal \in \RR^d}{\norm{\primal} = 1} \).
  Now, on the one hand, 
  \( z_n/\norm{z_n} \in \LevelCurve{\lzero}{k} \), for all $n\in\NN$,
  and, on the other hand, \( z_n/\norm{z_n} \in \SPHERE \).
  As a consequence \( z_n/\norm{z_n} \in \LevelCurve{\lzero}{k} \cap \SPHERE \),
  and we conclude that \( \primal \in 
  \overline{ \LevelCurve{\lzero}{k} \cap \SPHERE } \). 
  Thus, we have proved that 
  \( \LevelSet{\lzero}{k} \cap \SPHERE 
  \subset \overline{ \LevelCurve{\lzero}{k} \cap \SPHERE } \).
  \smallskip

  This ends the proof. 
\end{proof}

\begin{lemma} 
  If $A$ is a subset of the Euclidian unit sphere~$\SPHERE$ of~$\RR^d$, 
  then $A = \convexhull(A) \cap \SPHERE$. If $A$ is a closed subset of the Euclidian unit sphere~$\SPHERE$ of~$\RR^d$, 
  then $A = \closedconvexhull(A) \cap \SPHERE$.
  \label{lemma:convex_env}
\end{lemma}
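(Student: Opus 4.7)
The inclusion $A \subset \convexhull(A) \cap \SPHERE$ is immediate since $A \subset \SPHERE$ and $A \subset \convexhull(A)$. The plan for the reverse inclusion is to exploit the strict convexity of the Euclidean norm to force any convex combination of points of $A$ that lands back on the sphere to be a single point of $A$.

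More precisely, suppose $\primal \in \convexhull(A) \cap \SPHERE$, and write $\primal = \sum_{i=1}^{n} \lambda_i a_i$ with $a_i \in A$, $\lambda_i > 0$, and $\sum_{i=1}^{n} \lambda_i = 1$. Since $A \subset \SPHERE$, we have $\norm{a_i}=1$, hence
\begin{equation*}
1 = \norm{\primal} = \norm{\sum_{i=1}^{n} \lambda_i a_i}
\leq \sum_{i=1}^{n} \lambda_i \norm{a_i} = \sum_{i=1}^{n} \lambda_i = 1 \eqfinp
\end{equation*}
Equality in the triangle inequality for the Euclidean norm, combined with the fact that every $a_i$ has unit norm and every $\lambda_i$ is positive, forces all the $a_i$ to be equal (strict convexity of the Euclidean ball: the only way a proper convex combination of unit vectors can have unit norm is if all those vectors coincide). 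Therefore $\primal = a_1 \in A$, which proves the first claim.

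For the second claim, I will reduce to the first one by showing that $\closedconvexhull(A) = \convexhull(A)$ when $A$ is a closed subset of~$\SPHERE$. Indeed, since $A$ is closed and bounded in the finite-dimensional space~$\RR^d$, it is compact. By Carath\'eodory's theorem, every point of $\convexhull(A)$ is a convex combination of at most $d+1$ points of~$A$, so $\convexhull(A)$ is the image of the compact set $\Delta_{d+1} \times A^{d+1}$ under the continuous map $(\lambda,(a_0,\ldots,a_d)) \mapsto \sum_{i=0}^{d} \lambda_i a_i$. Hence $\convexhull(A)$ is compact, in particular closed, so that $\closedconvexhull(A) = \convexhull(A)$. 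Applying the first claim then yields $A = \convexhull(A) \cap \SPHERE = \closedconvexhull(A) \cap \SPHERE$.

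There is no real obstacle here: the only subtle point is the strict-convexity argument in the first part, which is specific to the Euclidean norm and would fail for a general norm (for instance the $\ell^{\infty}$ norm, whose unit ball has flat faces on the unit sphere).
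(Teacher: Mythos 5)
Your proof is correct. The second half is essentially identical to the paper's: both establish that $\convexhull(A)$ is compact when $A$ is compact (you via Carath\'eodory and a continuous image of a compact set, the paper by citing the corresponding theorem in Rockafellar) and then reduce to the first claim. For the first half you take a somewhat more self-contained route: the paper shows $\convexhull(A)\cap\SPHERE\subset\mathrm{extr}\bp{\convexhull(A)}$ by contradiction and then invokes the external fact that the extreme points of $\convexhull(A)$ lie in $A$, whereas you write a point of $\convexhull(A)\cap\SPHERE$ as a finite convex combination of points of $A$ and use the equality case of the triangle inequality (equivalently, of Cauchy--Schwarz) for the Euclidean norm to force all the generators to coincide. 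Both arguments rest on the same geometric fact --- strict convexity of the Euclidean ball, which you rightly flag as the point where a general norm would fail --- but yours avoids the appeal to the extreme-point lemma and is arguably more elementary.
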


\begin{proof} 
  We first prove that $A = \convexhull(A) \cap \SPHERE$ when $A \subset \SPHERE$. 
  Since $A \subset \convexhull(A)$ and $A\subset \SPHERE$, 
  we immediately get that $A \subset\convexhull(A) \cap \SPHERE$.
  To prove the reverse inclusion, we first start by proving that 
  $\convexhull(A)\cap \SPHERE \subset \mathrm{extr}(\convexhull(A))$,
  the set of extreme points of~$\convexhull(A)$.
  
  The proof is by contradiction.
  Suppose indeed that there exists $x\in \convexhull(A)\cap \SPHERE$ 
  and $x\not\in \mathrm{extr}(\convexhull(A))$. Then, we could find 
  $y \in \convexhull(A)$ and $z \in \convexhull(A)$, distinct from $x$,
  and such that $x = \lambda y + (1-\lambda) z$ for some $\lambda\in (0,1)$.
  Notice that necessarily \( y \neq z \) (because, else, we would have
  $x=y=z$ which would contradict $y \neq x$ and $z \neq x$). 
  By assumption $A\subset \SPHERE$, 
  we deduce that $\convexhull(A) \subset \BALL= \defset{\primal \in
    \RR^d}{\norm{\primal} \leq 1}$, the unit ball, 
  and therefore that \( \norm{y} \leq 1 \) and \( \norm{z} \leq 1 \).
  If $y$ or $z$ were not in $\SPHERE$ --- that is, if either
  \( \norm{y} < 1 \) or \( \norm{z} < 1 \) --- then we would obtain that
  \( \norm{x} \leq \lambda \norm{y} + (1-\lambda) \norm{z} < 1 \) 
  since 
  $\lambda\in (0,1)$;
  we would thus arrive at a contradiction since $x$ could not be in $\SPHERE$. 
  Thus, both $y$ and $z$ must be in $\SPHERE$, 
  and we have a contradiction since no $x \in \SPHERE$, 
  the Euclidian unit sphere, can be obtained as a convex combination of 
  $y \in \SPHERE$ and $z \in \SPHERE$, with \( y \neq z \).

  Hence, we have proved by contradiction that 
  $\convexhull(A)\cap \SPHERE \subset \mathrm{extr}(\convexhull(A))$.
  We can conclude using the fact that 
  $\mathrm{extr}(\convexhull(A)) \subset A$ 
  (see \cite[Exercice 6.4]{hiriart1998optimisation}).
  \smallskip

  Now, we consider the case where the subset $A$ of the Euclidian unit sphere~$\SPHERE$
  is closed.  Using the first part of the proof we have that
  $A= \convexhull(A) \cap \SPHERE$.  Now, $A$ is closed by assumption and bounded
  since $A\subset \SPHERE$. Thus, $A$ is compact and, in a finite
  dimensional space, we have that $\convexhull(A)$ is compact~\cite[Th.~17.2]{Rockafellar:1970}, thus closed. We conclude that 
  $A= \convexhull(A) \cap \SPHERE = \overline{\convexhull(A)} \cap \SPHERE = 
  \closedconvexhull(A) \cap \SPHERE$, where the last equality comes from~\cite[Prop.~3.46]{Bauschke-Combettes:2017}.
  \smallskip

  This ends the proof. 
\end{proof}

\subsection{Additional results on the function~${\cal L}_0$}

In Proposition~\ref{pr:calL0}, we have provided an expression,
for the proper convex lsc function~${\cal L}_0$ 
in Theorem~\ref{th:pseudonormlzero_convex},
as the value of the minimization problem~\eqref{eq:pseudonormlzero_convex_minimum}.
Here, we provide a characterization of the optimal solutions of~\eqref{eq:pseudonormlzero_convex_minimum}.

We recall that the \emph{exposed face} of the 
closed convex set~$\Convex \subset \RR^d$ at~$\dual \in \RR^d $ is  
\cite[p.220]{Hiriart-Urruty-Lemarechal-I:1993}
\begin{equation}
  \FACE_{\Convex}(\dual) = \bset{\primal \in \Convex}%
  { \proscal{\primal}{\dual} = \sigma_{\Convex}(\dual) } 
  = \argmax_{\primal \in \Convex} \proscal{\primal}{\dual} 
  \eqfinp
  \label{eq:FACE}
\end{equation}
In the sequel, we will use the following relations regarding 
faces of unit balls:
\begin{subequations}
  \begin{align}
    \FACE_{ \EuclidianTop{l}{\BALL} }\np{ 0 } 
    &=
      \EuclidianTop{l}{\BALL} 
      \eqsepv \forall l=1,\ldots,d
      \eqfinv
      \label{eq:FACE_l_=BALL}
    \\
    \FACE_{ \EuclidianTop{l}{\BALL} }\np{ \bar\primal^{(l)} } 
    & \subset 
      \EuclidianTop{l}{\SPHERE}
      \eqsepv \mtext{ if } \bar\primal^{(l)} \not = 0 
      \eqfinv
      \label{eq:FACE_l_subset_SPHERE}
    \\
    \FACE_{ \EuclidianTop{d}{\BALL} }\np{ \bar\primal^{(d)} } 
    &= 
      \Ba{ \frac{ \bar\primal^{(d)} }{ \norm{ \bar\primal^{(d)} } } }
      \eqsepv \mtext{ if } \bar\primal^{(d)} \not = 0 
      \eqfinp
      \label{eq:FACE_d_=pointing_vector}
  \end{align}
\end{subequations}


\begin{proposition}
  Let \( \primal \in \RR^d \) be such that \( \norm{\primal}<1 \). 
  The sequence \( \np{ \bar\primal^{(1)}, \ldots, \bar\primal^{(d)} } \)
  of vectors of~$\RR^d$ is solution 
  of the minimization problem~\eqref{eq:pseudonormlzero_convex_minimum}
  or, equivalently, of the minimization problem
  \begin{equation}
    \min_{ \substack{%
        \primal^{(1)} \in \RR^d, \ldots, \primal^{(d)} \in \RR^d 
        \\
        \sum_{ l=1 }^{ d } \sigma_{ \EuclidianTop{l}{\BALL} }\np{\primal^{(l)}} \leq 1 
        \\
        \sum_{ l=1 }^{ d } \primal^{(l)} = \primal
      } }
    \sum_{ l=1 }^{ d } l \sigma_{ \EuclidianTop{l}{\BALL} }\np{\primal^{(l)}}
    \label{eq:optimization_problem_calL0}
  \end{equation}
  if and only if 
  \begin{enumerate}
  \item 
    \label{it:KKT_optimization_problem_calL0_lambda=0}
    either
    \( \sum_{ l=1 }^{ d } \module{ \primal_l } =
    \EuclidianSupportNorm{1}{\primal} \leq~1 \)
    and 
    \( \np{ \bar\primal^{(1)}, \ldots, \bar\primal^{(d)} } =
    \np{ \primal, 0, \ldots, 0} \)
    (and then, the minimum in~\eqref{eq:optimization_problem_calL0}
    or~\eqref{eq:pseudonormlzero_convex_minimum} is equal to~\(
    \EuclidianSupportNorm{1}{\primal} \)), 
  \item 
    \label{it:KKT_optimization_problem_calL0_lambda_not=0}
    or there exists \( \lambda > 0 \) such that
    \begin{subequations}
      \begin{align}
        \bigcap_{ l=1 }^{ d } \np{ l + \lambda }
        \FACE_{ \EuclidianTop{l}{\BALL} }\np{ \bar\primal^{(l)} } 
        \not = \emptyset 
        \eqfinv
        \label{eq:KKT_optimization_problem_calL0_lambda_not=0_a}
        \\
        \sum_{ l=1 }^{ d }\sigma_{ \EuclidianTop{l}{\BALL} }\np{\primal^{(l)}} = 1 
        \eqfinv
        \label{eq:KKT_optimization_problem_calL0_lambda_not=0_b}
        \\
        \sum_{ l=1 }^{ d }  \bar\primal^{(l)} = \primal
        \eqfinp
      \end{align}
      \label{eq:KKT_optimization_problem_calL0_lambda_not=0}
    \end{subequations}
  \end{enumerate}
  \label{pr:KKT_optimization_problem_calL0}
\end{proposition}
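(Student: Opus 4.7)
The plan is to view~\eqref{eq:pseudonormlzero_convex_minimum}, equivalently~\eqref{eq:optimization_problem_calL0}, as a convex program with an affine equality constraint and a convex inequality constraint, and then to apply KKT. First I would verify Slater's condition under the hypothesis $\norm{\primal}<1$: taking $\primal^{(d)}=\primal$ and $\primal^{(l)}=0$ for $l<d$ gives $\sum_l \SupportNorm{l}{\primal^{(l)}}=\SupportNorm{d}{\primal}=\norm{\primal}<1$, strictly satisfying the inequality. Hence the KKT conditions will be both necessary and sufficient. Writing the Lagrangian with multipliers $\lambda\ge 0$ and $\mu\in\RR^d$, and using that the subdifferential of a support function is the corresponding exposed face (i.e.\ $\partial\sigma_{\SymmetricGauge{l}{\BALL}}(y)=\FACE_{\SymmetricGauge{l}{\BALL}}(y)$, applied to $\SupportNorm{l}{\cdot}=\sigma_{\SymmetricGauge{l}{\BALL}}$ by~\eqref{eq:norm_dual_norm}), the stationarity condition at an optimum reads
\begin{equation*}
\mu \in (l+\lambda)\,\FACE_{\SymmetricGauge{l}{\BALL}}\np{\bar\primal^{(l)}}
\eqsepv \forall l=1,\ldots,d
\eqfinp
\end{equation*}
Together with complementary slackness $\lambda\bp{\sum_l\SupportNorm{l}{\bar\primal^{(l)}}-1}=0$ and the equality constraint, these conditions will completely characterize optimal solutions.

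Next, I would split according to the sign of $\lambda$. If $\lambda=0$, I expect to show $\bar\primal^{(l)}=0$ for every $l\ge 2$. Indeed, stationarity for $l=1$ gives $\mu\in\FACE_{\SymmetricGauge{1}{\BALL}}\np{\bar\primal^{(1)}}\subset\SymmetricGauge{1}{\BALL}$ (whether $\bar\primal^{(1)}$ vanishes or not, by~\eqref{eq:FACE_l_=BALL} or~\eqref{eq:FACE_l_subset_SPHERE}), hence $\SymmetricGaugeNorm{1}{\mu}\le 1$. Arguing by contradiction, if $\bar\primal^{(l_0)}\neq 0$ for some $l_0\ge 2$ then~\eqref{eq:FACE_l_subset_SPHERE} yields $\mu/l_0\in\SymmetricGauge{l_0}{\SPHERE}$, i.e.\ $\SymmetricGaugeNorm{l_0}{\mu}=l_0$; but the norm comparison~\eqref{eq:symmetric_gauge_norm_norms_relation} gives $l_0=\SymmetricGaugeNorm{l_0}{\mu}\le\sqrt{l_0}\,\SymmetricGaugeNorm{1}{\mu}\le\sqrt{l_0}$, forcing $l_0\le 1$, a contradiction. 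The equality constraint then imposes $\bar\primal^{(1)}=\primal$, and feasibility of the inequality yields $\SupportNorm{1}{\primal}\le 1$, which is exactly case~\ref{it:KKT_optimization_problem_calL0_lambda=0}.

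If instead $\lambda>0$, complementary slackness becomes~\eqref{eq:KKT_optimization_problem_calL0_lambda_not=0_b}, and the stationarity condition, rewritten as $\mu\in\bigcap_{l=1}^d(l+\lambda)\,\FACE_{\SymmetricGauge{l}{\BALL}}\np{\bar\primal^{(l)}}$, witnesses the nonemptiness required in~\eqref{eq:KKT_optimization_problem_calL0_lambda_not=0_a}. For the converse direction, each case supplies admissible KKT multipliers: in case~\ref{it:KKT_optimization_problem_calL0_lambda=0}, any $\mu\in\FACE_{\SymmetricGauge{1}{\BALL}}(\primal)$ satisfies $\SymmetricGaugeNorm{1}{\mu}\le 1$, hence $\SymmetricGaugeNorm{l}{\mu}\le\sqrt{l}\le l$ by~\eqref{eq:symmetric_gauge_norm_norms_relation}, so $\mu\in l\,\SymmetricGauge{l}{\BALL}=l\,\FACE_{\SymmetricGauge{l}{\BALL}}(0)$ for every $l\ge 2$ by~\eqref{eq:FACE_l_=BALL}; in case~\ref{it:KKT_optimization_problem_calL0_lambda_not=0}, the nonempty intersection directly furnishes~$\mu$. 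Sufficiency of KKT under Slater's condition then closes the argument. The main obstacle is the case $\lambda=0$, where the apparently innocuous norm comparison~\eqref{eq:symmetric_gauge_norm_norms_relation} is the key ingredient that rules out any nontrivial component $\bar\primal^{(l)}$ for $l\ge 2$; beyond that, the proof is a routine application of convex KKT theory.
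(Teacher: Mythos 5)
Your proposal is correct and follows essentially the same route as the paper: Slater's condition via the feasible point $(0,\ldots,0,\primal)$, KKT with $\partial\sigma_{\SymmetricGauge{l}{\BALL}}=\FACE_{\SymmetricGauge{l}{\BALL}}$, the identity $l\FACE_l+\lambda\FACE_l=(l+\lambda)\FACE_l$ for $\lambda>0$, and the comparison $\SymmetricGaugeNorm{l}{\mu}\leq\sqrt{l}\,\SymmetricGaugeNorm{1}{\mu}$ to force $\bar\primal^{(l)}=0$ for $l\geq 2$ when $\lambda=0$. Your version of that last step (contradiction for each nonzero index $l_0\geq 2$ directly, rather than via the minimal nonzero index) is if anything slightly cleaner than the paper's, but it is the same argument.
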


\begin{proof}
  The minimization problems~\eqref{eq:pseudonormlzero_convex_minimum}
  and \eqref{eq:optimization_problem_calL0}
  are the same because 
  \( \sigma_{ \EuclidianTop{l}{\BALL} }\np{\cdot} =
  \EuclidianSupportNorm{k}{\cdot} \) since the $k$-support norm
  is the dual norm, as in~\eqref{eq:dual_norm}, 
  of the top-$k$ norm
  (see Definition~\ref{de:symmetric_gauge_norm}).
  First, we establish necessary and sufficient Karush-Kuhn-Tucker (KKT) 
  conditions for the optimization
  problem~\eqref{eq:optimization_problem_calL0}.
  \begin{subequations}
    The optimization problem~\eqref{eq:optimization_problem_calL0}
    is the minimization of the proper convex lsc function
    \begin{equation}
      \fonctionprimal_0\np{ \primal^{(1)}, \ldots, \primal^{(d)} } = 
      \sum_{ l=1 }^{ d } l \sigma_{ \EuclidianTop{l}{\BALL} }\np{\primal^{(l)}}
    \end{equation}
    over a convex domain of \( \bp{ \RR^d }^d \) defined by 
    one scalar inequality constraint, 
    \( \fonctionprimal_1\np{ \primal^{(1)}, \ldots, \primal^{(d)} } \leq 0 \),
    represented by the proper convex lsc function
    \begin{equation}
      \fonctionprimal_1\np{ \primal^{(1)}, \ldots, \primal^{(d)} } = 
      \sum_{ l=1 }^{ d } \sigma_{ \EuclidianTop{l}{\BALL} }\np{\primal^{(l)}} -1
      \eqfinv
    \end{equation}
    and $d$ equality constraints,
    \( \fonctionprimal_{1+k}\np{ \primal^{(1)}, \ldots, \primal^{(d)} } = 0 \)
    for $k=1,\ldots,d$, 
    represented by the  $d$ affine functions
    \begin{equation}
      \fonctionprimal_{1+k}\np{ \primal^{(1)}, \ldots, \primal^{(d)} } = 
      \proscal{\sum_{ l=1 }^{ d } \bar\primal^{(l)} - \primal}{e_k}   
      \eqsepv k=1,\ldots,d 
      \eqfinv
    \end{equation}
    where $e_k$ is the $k$-canonical vector of~$\RR^d$.
    \label{eq:optimization_problem_calL0_functions}
  \end{subequations}
  It should be noted that all the functions 
  $\fonctionprimal_{0}$, $\fonctionprimal_{1}$, $\fonctionprimal_{2}$, \ldots, $\fonctionprimal_{1+d}$
  are proper and have \( \bp{ \RR^d }^d \) for effective domain. 

  As \( \EuclidianSupportNorm{d}{ \primal } = \norm{\primal}<1 \), 
  the sequence \( \bp{ \bar\primal^{(1)}, \ldots, \bar\primal^{(d)} } =
  \bp{ 0,0,\ldots,0,\primal } \) strictly satisfies the inequality constraint,
  that is, 
  \( \fonctionprimal_1\np{ 0,0,\ldots,0,\primal }= \norm{\primal}-1<0 \)
  and satisfies also the equality constraints
  \( \fonctionprimal_2\np{ 0,0,\ldots,0,\primal }= 
  \cdots = \fonctionprimal_{1+d}\np{ 0,0,\ldots,0,\primal }=0 \).
  By the Slater condition, the constraints are qualified.
  Therefore, the sequence \( \bp{ \bar\primal^{(1)}, \ldots, \bar\primal^{(d)} } \)
  is solution of the convex 
  optimization problem~\eqref{eq:optimization_problem_calL0}
  if and only if it satisfies the KKT conditions 
  (\cite[Corollary 28.3.1]{Rockafellar:1970},
  \cite[Example~$1^{'''}$, p.~64]{Rockafellar:1974},
  \cite[Chapter VII]{Hiriart-Urruty-Lemarechal-I:1993}),
  that is, 
  there exists \( \lambda \geq 0 \) and 
  \(\mu=\np{ \mu_1, \ldots, \mu_{d} } \in \RR^d \) 
  such that
  \begin{subequations}
    \begin{align}
      0 \in \partial\fonctionprimal_0
      \np{ \bar\primal^{(1)}, \ldots, \bar\primal^{(d)} } 
      + \lambda \partial\fonctionprimal_1
      \np{ \bar\primal^{(1)}, \ldots, \bar\primal^{(d)} } 
      + \sum_{ k=1 }^{ d } \mu_{k} 
      \partial\fonctionprimal_{1+k} 
      \np{ \bar\primal^{(1)}, \ldots, \bar\primal^{(d)} }
      \eqfinv
      \label{eq:KKT_one}
      \\
      \lambda \fonctionprimal_1
      \np{ \bar\primal^{(1)}, \ldots, \bar\primal^{(d)} } =0
      \eqfinv
      \\
      \fonctionprimal_1
      \np{ \bar\primal^{(1)}, \ldots, \bar\primal^{(d)} } \leq 0
      \eqfinv
      \\
      \forall k=1,\ldots,d \eqsepv 
      \fonctionprimal_{1+k} 
      \np{ \bar\primal^{(1)}, \ldots, \bar\primal^{(d)} } = 0
      \eqfinp  
    \end{align}
  \end{subequations}
  Since \( \partial\sigma_{ \EuclidianTop{l}{\BALL} }\np{\primal^{(l)}} 
  =\FACE_{ \EuclidianTop{l}{\BALL} }\np{ \bar\primal^{(l)} } \)
  \cite[Corollary 8.25]{Rockafellar-Wets:1998},
  for $l=1,\ldots,d$, 
  we have, by~\eqref{eq:optimization_problem_calL0_functions},
  \begin{subequations}
    \begin{align}
      \partial\fonctionprimal_0
      \np{ \bar\primal^{(1)}, \ldots, \bar\primal^{(d)} } 
      &=
        \bp{ \FACE_{ \EuclidianTop{1}{\BALL} }\np{ \bar\primal^{(1)} },
        \ldots, 
        l\FACE_{ \EuclidianTop{l}{\BALL} }\np{ \bar\primal^{(l)} }
        \ldots, 
        d\FACE_{ \EuclidianTop{d}{\BALL} }\np{ \bar\primal^{(d)} } }
        \eqfinv
      \\
      \partial\fonctionprimal_1
      \np{ \bar\primal^{(1)}, \ldots, \bar\primal^{(d)} } 
      &=
        \bp{ \FACE_{ \EuclidianTop{1}{\BALL} }\np{ \bar\primal^{(1)} },
        \ldots, 
        \FACE_{ \EuclidianTop{l}{\BALL} }\np{ \bar\primal^{(l)} }
        \ldots, 
        \FACE_{ \EuclidianTop{d}{\BALL} }\np{ \bar\primal^{(d)} } }
        \eqfinv
      \\
      \partial\fonctionprimal_{1+k} 
      \np{ \bar\primal^{(1)}, \ldots, \bar\primal^{(d)} }
      &= \np{ e_k, \ldots, e_k } 
        \eqsepv \forall k=1,\ldots,d 
        \eqfinp
    \end{align}
  \end{subequations}
  With these expressions, Equation~\eqref{eq:KKT_one} is equivalent to 
  \( \mu=\np{ \mu_1, \ldots, \mu_{d} } 
  = \sum_{ k=1 }^{ d } \mu_{k} e_k \in 
  l\FACE_{ \EuclidianTop{l}{\BALL} }\np{ \bar\primal^{(l)} } +
  \lambda \FACE_{ \EuclidianTop{l}{\BALL} }\np{ \bar\primal^{(l)} } \),
  for $l=1,\ldots,d$.

  We conclude that 
  the sequence \( \bp{ \bar\primal^{(1)}, \ldots, \bar\primal^{(d)} } \)
  of vectors of~$\RR^d$ is solution of the optimization problem~\eqref{eq:optimization_problem_calL0}
  if and only if there exists \( \lambda \geq 0 \) such that
  the following conditions are satisfied
  \begin{subequations}
    \begin{align}
      \bigcap_{ l=1 }^{ d } \Bc{ 
      l\FACE_{ \EuclidianTop{l}{\BALL} }\np{ \bar\primal^{(l)} } +
      \lambda \FACE_{ \EuclidianTop{l}{\BALL} }\np{ \bar\primal^{(l)} } }
      \not = \emptyset 
      \eqfinv
      \label{eq:KKT_optimization_problem_calL0_a}
      \\
      \lambda \Bp{ \sum_{ l=1 }^{ d } 
      \sigma_{ \EuclidianTop{l}{\BALL} }\np{\primal^{(l)}} -1 }=0
      \eqfinv
      \\
      \sum_{ l=1 }^{ d }\sigma_{ \EuclidianTop{l}{\BALL} }\np{\primal^{(l)}}  \leq 1 
      \eqfinv
      \\
      \sum_{ l=1 }^{ d }  \bar\primal^{(l)} = \primal
      \eqfinp
    \end{align}
    \label{eq:KKT_optimization_problem_calL0}
  \end{subequations}
  \smallskip

  Second, we turn to prove Item~\ref{it:KKT_optimization_problem_calL0_lambda=0}
  and Item~\ref{it:KKT_optimization_problem_calL0_lambda_not=0}. 

  \begin{enumerate}
  \item 
    If \( \lambda= 0 \) in~\eqref{eq:KKT_optimization_problem_calL0}, 
    we obtain 
    \begin{subequations}
      \begin{align}
        \bigcap_{ l=1 }^{ d } l\FACE_{ \EuclidianTop{l}{\BALL} }\np{ \bar\primal^{(l)} } 
        \not = \emptyset 
        \eqfinv
        \label{eq:KKT_optimization_problem_calL0_lambda=0_a}
        \\
        \sum_{ l=1 }^{ d }\sigma_{ \EuclidianTop{l}{\BALL} }\np{\primal^{(l)}}  \leq 1 
        \eqfinv
        \label{eq:KKT_optimization_problem_calL0_lambda=0_b}
        \\
        \sum_{ l=1 }^{ d }  \bar\primal^{(l)} = \primal
        \eqfinp
        \label{eq:KKT_optimization_problem_calL0_lambda=0_c}
      \end{align}
      \label{eq:KKT_optimization_problem_calL0_lambda=0}
    \end{subequations}
    We now show that~\eqref{eq:KKT_optimization_problem_calL0_lambda=0}
    holds true if and only if 
    \( \EuclidianSupportNorm{1}{\primal} \leq~1 \)
    and 
    \( \np{ \bar\primal^{(1)}, \bar\primal^{(2)}, \ldots, \bar\primal^{(d)} } =
    \np{ \primal, 0, \ldots, 0} \).
    \smallskip

    On the one hand, let \( \np{ \bar\primal^{(1)}, \ldots, \bar\primal^{(d)} } \)
    be a sequence of vectors of~$\RR^d$ which
    satisfies~\eqref{eq:KKT_optimization_problem_calL0_lambda=0}.
    If \( \np{ \bar\primal^{(1)}, \ldots, \bar\primal^{(d)} } =
    \np{0, \ldots, 0} \), then \( \primal = 0 \) and we indeed conclude that
    \( \EuclidianSupportNorm{1}{\primal} = \EuclidianSupportNorm{1}{0}=0 \leq~1 \)
    and 
    \( \np{ \bar\primal^{(1)}, \bar\primal^{(2)}, \ldots, \bar\primal^{(d)} } 
    = \np{ 0, 0, \ldots, 0} = \np{ \primal, 0, \ldots, 0} \).

    If \( \np{ \bar\primal^{(1)}, \ldots, \bar\primal^{(d)} } \not =
    \np{0, \ldots, 0} \), then 
    \( k= \min \defset{ l\in \{1, \ldots, d\} }%
    { \bar\primal^{(l)} \not = 0 } \) is well defined.
    By~\eqref{eq:KKT_optimization_problem_calL0_lambda=0_a},
    there exists \( \dual \in \bigcap_{ l=1 }^{ d } 
    l\FACE_{ \EuclidianTop{l}{\BALL} }\np{ \bar\primal^{(l)} } \),
    where \( \FACE_{ \EuclidianTop{l}{\BALL} }\np{ \bar\primal^{(l)} } 
    \subset \EuclidianTop{l}{\BALL} \)
    for any $l=1, \ldots, d$, by definition~\eqref{eq:FACE} of the face.
    Now, by the inclusion~\eqref{eq:FACE_l_subset_SPHERE},
    we have that 
    \( \FACE_{ \EuclidianTop{k}{\BALL} }\np{ \bar\primal^{(k)} } 
    \subset \EuclidianTop{k}{\SPHERE} \) 
    since \( \bar\primal^{(k)} \not = 0 \) by definition of~$k$. 
    Therefore, there exists \( \dual \in 
    \EuclidianTop{1}{\BALL} \cap k\EuclidianTop{k}{\SPHERE} \), that is,
    \( \EuclidianTopNorm{1}{\dual} =
    \max_{i=1,\ldots,d} \module{\dual_i} \leq 1 \) and
    \( \EuclidianTopNorm{k}{\dual}=k \). 
    Hence, 
    it easily follows from 
    definition~\eqref{eq:symmetric_gauge_norm}
    of \( \EuclidianTopNorm{k}{\cdot} \) that 
    (see also~\eqref{eq:symmetric_gauge_norm_norms_relation}) 
    \( k^2 = \bp{ \EuclidianTopNorm{k}{\dual} }^2 
    \leq k \bp{ \EuclidianTopNorm{1}{\dual} }^2 \leq k \).
    This gives $k=1$, hence \( \bar\primal^{(1)} \not = 0 \) 
    and \( \bar\primal^{(l)} = 0 \) for all $l=2,\ldots,d$
    by definition of~$k$.
    We conclude that necessarily 
    \( \np{ \bar\primal^{(1)}, \ldots, \bar\primal^{(d)} } =
    \np{ \primal, 0, \ldots, 0} \)
    by~\eqref{eq:KKT_optimization_problem_calL0_lambda=0_c}
    and \( \sigma_{ \EuclidianTop{1}{\BALL} }\np{\primal}=
    \EuclidianSupportNorm{1}{\primal} \leq 1 \)
    by~\eqref{eq:KKT_optimization_problem_calL0_lambda=0_b}.
    \smallskip

    On the other hand, suppose that \( \EuclidianSupportNorm{1}{\primal} \leq 1 \)
    and put \( \np{ \bar\primal^{(1)}, \ldots, \bar\primal^{(d)} } =
    \np{ \primal, 0, \ldots, 0} \).
    Then, Equations~\eqref{eq:KKT_optimization_problem_calL0_lambda=0_b}
    and~\eqref{eq:KKT_optimization_problem_calL0_lambda=0_c}
    are satisfied.
    So is~\eqref{eq:KKT_optimization_problem_calL0_lambda=0_a}
    because 
    \begin{align*}
      \bigcap_{ l=1 }^{ d } 
      l\FACE_{ \EuclidianTop{l}{\BALL} }\np{ \bar\primal^{(l)} } 
      &=
        \FACE_{ \EuclidianTop{1}{\BALL} }\np{ \primal }
        \cap \bgc{ \bigcap_{ l=2 }^{ d } 
        l\EuclidianTop{l}{\BALL} } 
        \tag{ by~\eqref{eq:FACE_l_=BALL} }
      \\
      &=
        \FACE_{ \EuclidianTop{1}{\BALL} }\np{ \primal }
        \not= \emptyset\eqfinv
    \end{align*}
    {because \( \FACE_{ \EuclidianTop{1}{\BALL} }\np{ \primal }
      \subset \EuclidianTop{1}{\BALL} 
      \subset \bigcap_{ l=2 }^{ d } \sqrt{l}\EuclidianTop{l}{\BALL} 
      \subset \bigcap_{ l=2 }^{ d } l\EuclidianTop{l}{\BALL} \)
      by the Inequality~\eqref{eq:symmetric_gauge_norm_norms_relation}.}
  \item 
    If \( \lambda> 0 \) in~\eqref{eq:KKT_optimization_problem_calL0}, 
    we obtain item~\ref{it:KKT_optimization_problem_calL0_lambda_not=0}.
    Indeed, \eqref{eq:KKT_optimization_problem_calL0_a}
    is equivalent to~\eqref{eq:KKT_optimization_problem_calL0_lambda_not=0_a}
    because \( l\FACE_{ \EuclidianTop{l}{\BALL} }\np{ \bar\primal^{(l)} } +
    \lambda \FACE_{ \EuclidianTop{l}{\BALL} }\np{ \bar\primal^{(l)} }
    = (l+\lambda) \FACE_{ \EuclidianTop{l}{\BALL} }\np{ \bar\primal^{(l)} } \)
    since the face~\( \FACE_{ \EuclidianTop{l}{\BALL} }\np{ \bar\primal^{(l)} } \)
    is convex, and $l>0$, $\lambda>0$.
  \end{enumerate}
  \smallskip

  This ends the proof. 
\end{proof}

Now, we specialize in the two-dimensional case~$d=2$.
Because the function~${\cal L}_0$ in~\eqref{eq:definition_calL0}
satisfies~\eqref{eq:calL0_module}, we restrict the following Proposition to 
\( \primal=\np{\primal_1,\primal_2} \in \RR_+^2 \).

\begin{proposition}
  Let \( \primal=\np{\primal_1,\primal_2} \in \RR_+^2 \) 
  be such that \( \primal_1^2+\primal_2^2<1 \). 
  The sequence \( \bp{ \bar\primal^{(1)}, \bar\primal^{(2)} } \)
  of vectors of~$\RR^2$ is solution of the optimization problem
  \begin{equation}
    {\cal L}_0\np{\primal}= 
    \min_{ \substack{%
        \primal^{(1)} \in \RR^2, \primal^{(2)} \in \RR^2
        \\
        \EuclidianSupportNorm{1}{\primal^{(1)}} + \EuclidianSupportNorm{2}{\primal^{(2)}}  \leq 1 
        \\
        \primal^{(1)} + \primal^{(2)} = \primal
      } }
    \EuclidianSupportNorm{1}{\primal^{(1)}} + 2 \EuclidianSupportNorm{2}{\primal^{(2)}}
    \label{eq:optimization_problem_calL0_d=2}
  \end{equation}
  if and only if one of the following statements holds true:
  \begin{subequations}
    \begin{enumerate}
    \item 
      \label{it:KKT_optimization_problem_calL0_d=2_one}
      \( \primal_1+\primal_2 
      \leq 1 \), 
      and then 
      \( \bp{ \bar\primal^{(1)}, \bar\primal^{(2)} } =\np{\primal,0} \),
      and 
      \begin{equation}
        {\cal L}_0\bp{ \np{\primal_1,\primal_2} } = \primal_1+\primal_2 
        \eqfinv 
      \end{equation}
    \item 
      \label{it:KKT_optimization_problem_calL0_d=2_two}
      \( \primal_1 > 0 \),
      \( \primal_1 + \np{\sqrt{2}-1}\primal_2 \geq 1 \),
      \( \primal_1 > \primal_2 \),
      and then 
      \begin{align*}
        \bar\primal^{(1)} 
        = 
        \Bp{ \frac{ 1-\np{ \primal_1^2+ \primal_2^2 } }{ 2(1-\primal_1) } , 0 }
        \eqfinv
        \quad 
        \bar\primal^{(2)}
        = 
        \Bp{ \frac{ 2\primal_1 -\primal_1^2+ \primal_2^2 -1 }%
        { 2(1-\primal_1) } , \primal_2 } 
        \eqfinv
      \end{align*}
      \begin{equation}
        {\cal L}_0\bp{ \np{\primal_1,\primal_2} } = 
        \frac{3}{2} - \frac{\primal_1}{2} + \frac{\primal_2^2}{ 2(1-\primal_1) } 
        \eqfinv 
      \end{equation}
    \item 
      \label{it:KKT_optimization_problem_calL0_d=2_two_bis}
      \( \primal_2 > 0 \),
      \( \primal_2 + \np{\sqrt{2}-1}\primal_1 \geq 1 \),
      \( \primal_2 > \primal_1 \),
      and then 
      \begin{align*}
        \bar\primal^{(1)} 
        = 
        \Bp{ 0, \frac{ 1-\np{ \primal_1^2+ \primal_2^2 } }{ 2(1-\primal_2) } }
        \eqfinv
        \quad 
        \bar\primal^{(2)}
        = 
        \Bp{ \primal_1, \frac{ 2\primal_2 -\primal_2^2+ \primal_1^2 -1 }%
        { 2(1-\primal_2) } }
        \eqfinv
      \end{align*}
      \begin{equation}
        {\cal L}_0\bp{ \np{\primal_1,\primal_2} } = 
        \frac{3}{2} - \frac{\primal_2}{2} + \frac{\primal_1^2}{ 2(1-\primal_2) } 
        \eqfinv 
      \end{equation}
    \item 
      \label{it:KKT_optimization_problem_calL0_d=2_three}
      \( \primal_1 + \primal_2 > 1 \), 
      \( \np{\sqrt{2}-1}{\primal_1}+{\primal_2} < 1 \),
      \(  {\primal_1}+\np{\sqrt{2}-1}{\primal_2} < 1 \),
      and then 
      \begin{align*}
        \bar\primal^{(1)} 
        &= 
          \Bp{ \frac{ 1- \np{\sqrt{2}-1}{\primal_1} - {\primal_2} }{ 2\np{\sqrt{2}-1} } , 
          \frac{ 1- {\primal_1} - \np{\sqrt{2}-1}{\primal_2} }{ 2\np{\sqrt{2}-1} } }
          \eqfinv
        \\
        \bar\primal^{(2)}
        &= 
          \Bp{ \frac{ {\primal_1} + {\primal_2} -1 }{ 2\np{\sqrt{2}-1} } , 
          \frac{ {\primal_1} + {\primal_2} -1 }{ 2\np{\sqrt{2}-1} } }
          \eqfinv
      \end{align*}
      \begin{equation}
        {\cal L}_0\bp{ \np{\primal_1,\primal_2} } = 
        \frac{\primal_1 + \primal_2 -2+\sqrt{2} }{\sqrt{2}-1} 
        \eqfinp 
      \end{equation}
    \end{enumerate}
  \end{subequations}
  \label{pr:KKT_optimization_problem_calL0_d=2}
\end{proposition}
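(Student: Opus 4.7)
The plan is to apply the general Karush-Kuhn-Tucker characterization of Proposition~\ref{pr:KKT_optimization_problem_calL0} specialized to $d=2$, where the two unit balls involved admit simple descriptions: $\SymmetricGauge{1}{\BALL}=[-1,1]^2$ (the $\ell_\infty$ unit square) and $\SymmetricGauge{2}{\BALL}=\BALL$ (the Euclidean unit disk). I would start by enumerating the exposed faces of these two balls at points of $\RR_+^2$: by~\eqref{eq:FACE_d_=pointing_vector} the face of the Euclidean disk at a nonzero vector $\bar\primal^{(2)}$ is the singleton $\{\bar\primal^{(2)}/\norm{\bar\primal^{(2)}}\}$, whereas the face of the unit square at $\bar\primal^{(1)} \in \RR_+^2$ is either the vertex $\{(1,1)\}$ (when both components of $\bar\primal^{(1)}$ are positive), the edge $\{1\}\times[-1,1]$ (when $\bar\primal^{(1)}_1>\bar\primal^{(1)}_2=0$), the edge $[-1,1]\times\{1\}$ (when $\bar\primal^{(1)}_2>\bar\primal^{(1)}_1=0$), or the whole square (when $\bar\primal^{(1)}=0$).

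Item~\ref{it:KKT_optimization_problem_calL0_d=2_one} follows directly from Item~\ref{it:KKT_optimization_problem_calL0_lambda=0} of Proposition~\ref{pr:KKT_optimization_problem_calL0}, observing that $\SupportNorm{1}{\primal}=\primal_1+\primal_2$ when $\primal\in\RR_+^2$; this yields both the optimal point $(\primal,0)$ and the value $\primal_1+\primal_2$.

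For Items~\ref{it:KKT_optimization_problem_calL0_d=2_two}--\ref{it:KKT_optimization_problem_calL0_d=2_three}, I would invoke Item~\ref{it:KKT_optimization_problem_calL0_lambda_not=0} with $\lambda>0$. The key observation is that $\bar\primal^{(2)}\neq 0$ in these cases (else one falls back to Case~1), so the KKT intersection condition~\eqref{eq:KKT_optimization_problem_calL0_lambda_not=0_a} reduces to the single equation $(2+\lambda)\bar\primal^{(2)}/\norm{\bar\primal^{(2)}} \in (1+\lambda)\FACE_{\SymmetricGauge{1}{\BALL}}(\bar\primal^{(1)})$. I split into two geometric subcases according to whether $\FACE_{\SymmetricGauge{1}{\BALL}}(\bar\primal^{(1)})$ is a vertex or an edge. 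In the vertex case (which produces Item~\ref{it:KKT_optimization_problem_calL0_d=2_three}), the vector equation $(1+\lambda)(1,1)=(2+\lambda)\bar\primal^{(2)}/\norm{\bar\primal^{(2)}}$ together with the unit-norm requirement on the right-hand side pins down $\lambda=\sqrt 2$ and forces $\bar\primal^{(2)}$ to point along the diagonal $(1,1)/\sqrt 2$; its length $t=\norm{\bar\primal^{(2)}}$ is then determined by the saturated equality~\eqref{eq:KKT_optimization_problem_calL0_lambda_not=0_b} and by $\bar\primal^{(1)}+\bar\primal^{(2)}=\primal$, giving $t=(\primal_1+\primal_2-1)/(\sqrt 2-1)$ and, after simplification, the stated value ${\cal L}_0(\primal)=(\primal_1+\primal_2-2+\sqrt 2)/(\sqrt 2-1)$. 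In the edge case where $\bar\primal^{(1)}$ lies along the first axis (producing Item~\ref{it:KKT_optimization_problem_calL0_d=2_two}), the second coordinate relation yields $\bar\primal^{(2)}_2=\primal_2$, while the KKT condition determines $\bar\primal^{(2)}_1/\norm{\bar\primal^{(2)}}=(1+\lambda)/(2+\lambda)$; solving this together with $\bar\primal^{(1)}_1+\bar\primal^{(2)}_1=\primal_1$ and the saturated constraint produces the explicit formulas and the value ${\cal L}_0(\primal)=\frac 32-\frac{\primal_1}{2}+\frac{\primal_2^2}{2(1-\primal_1)}$. Item~\ref{it:KKT_optimization_problem_calL0_d=2_two_bis} follows by the symmetry exchanging the two coordinates.

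The main obstacle will be the careful bookkeeping of the boundary conditions delimiting each subcase. Each KKT solution above is feasible only in a specific region of $\RR_+^2$, and I would need to verify that the inequalities stated in Items~\ref{it:KKT_optimization_problem_calL0_d=2_one}--\ref{it:KKT_optimization_problem_calL0_d=2_three} correspond exactly to the requirements that the $\bar\primal^{(1)},\bar\primal^{(2)}$ produced by solving the KKT system have the postulated face structure (strict positivity of the relevant components of $\bar\primal^{(1)}$, and strict positivity of $\norm{\bar\primal^{(2)}}$). Specifically, the transition between the edge subcase and the vertex subcase occurs precisely at $\primal_1+(\sqrt 2-1)\primal_2=1$ (respectively $(\sqrt 2-1)\primal_1+\primal_2=1$), and the transition to Item~\ref{it:KKT_optimization_problem_calL0_d=2_one} occurs at $\primal_1+\primal_2=1$. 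Checking that these four regions tile $\{\primal\in\RR_+^2\mid \primal_1^2+\primal_2^2<1\}$ with matching values of ${\cal L}_0$ at the common boundaries completes the proof, and the extension to arbitrary $\primal\in\RR^2$ follows from the symmetry property~\eqref{eq:calL0_module}.
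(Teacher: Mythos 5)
Your proposal is correct and follows essentially the same route as the paper: specialize the KKT characterization of Proposition~\ref{pr:KKT_optimization_problem_calL0} to $d=2$, use that $\FACE_{\SymmetricGauge{2}{\BALL}}(\bar\primal^{(2)})$ is a singleton for $\bar\primal^{(2)}\neq 0$ while $\FACE_{\SymmetricGauge{1}{\BALL}}(\bar\primal^{(1)})$ is a vertex, an edge or the whole square, and solve the resulting system in each subcase to recover the four regions and the explicit formulas. The only point treated more casually than in the paper is the dismissal of the degenerate subcases with $\lambda>0$ (namely $\bar\primal^{(2)}=0$, which forces $\primal_1+\primal_2=1$ and so falls under Item~\ref{it:KKT_optimization_problem_calL0_d=2_one}, and $\bar\primal^{(1)}=0$, which is impossible since the saturated constraint would force $\norm{\primal}=1$), but these are easily filled in.
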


\begin{proof}
  By Proposition~\ref{pr:KKT_optimization_problem_calL0},
  the sequence \( \bp{ \bar\primal^{(1)}, \bar\primal^{(2)} } \)
  of vectors of~$\RR^2$ is solution of the optimization 
  problem~\eqref{eq:optimization_problem_calL0_d=2}
  if and only if 

  \noindent $-$ 
  either 
  \( \primal_1+\primal_2 = \EuclidianSupportNorm{1}{\primal} \leq 1 \)
  and 
  \( \bp{ \bar\primal^{(1)}, \bar\primal^{(2)} } =\np{\primal,0} \),
  which is equivalent to Item~\ref{it:KKT_optimization_problem_calL0_d=2_one}, 

  \noindent $-$ 
  or there exists \( \lambda > 0 \) such that
  \begin{subequations}
    \begin{align}
      (1+\lambda) \FACE_{ \EuclidianTop{1}{\BALL} }\np{ \bar\primal^{(1)} }
      \cap
      (2+\lambda) \FACE_{ \EuclidianTop{2}{\BALL} }\np{ \bar\primal^{(2)} } 
      \not = \emptyset 
      \eqfinv
      \label{eq:KKT_optimization_problem_calL0_lambda_not=0_d=2_a}
      \\
      \EuclidianSupportNorm{1}{\bar\primal^{(1)}} + \EuclidianSupportNorm{2}{\bar\primal^{(2)}} = 1 
      \eqfinv
      \label{eq:KKT_optimization_problem_calL0_lambda_not=0_d=2_b}
      \\
      \bar\primal^{(1)} + \bar\primal^{(2)} = \primal
      \eqfinp
      \label{eq:KKT_optimization_problem_calL0_lambda_not=0_d=2_c}
    \end{align}
    \label{eq:KKT_optimization_problem_calL0_lambda_not=0_d=2}
  \end{subequations}
  We are going to prove, in several steps, that 
  \( \bp{ \bar\primal^{(1)}, \bar\primal^{(2)} } \)
  satisfies~\eqref{eq:KKT_optimization_problem_calL0_lambda_not=0_d=2}
  for a certain \( \lambda >0 \)
  if and only if it satisfies
  Item~\ref{it:KKT_optimization_problem_calL0_d=2_two},
  Item~\ref{it:KKT_optimization_problem_calL0_d=2_two_bis}
  or Item~\ref{it:KKT_optimization_problem_calL0_d=2_three}.
  For this purpose, we will use the relations
  \begin{subequations}
    \begin{align}
      \EuclidianTop{1}{\BALL} 
      &=
        [-1,1]^2  \eqfinv 
        \label{eq:BALL_1_d=2}
      \\
      \FACE_{ \EuclidianTop{1}{\BALL} }\np{ \bar\primal^{(1)} } 
      & =
        \begin{cases}
          [-1,1]^2 
          & 
          \mtext{ if } \lzero\np{ \bar\primal^{(1)} } = 0 \eqfinv 
          \\
          \textrm{sign}\np{ \bar\primal^{(1)}_1 } \times [-1,1]
          & 
          \mtext{ if } \lzero\np{ \bar\primal^{(1)} } = 1
          \mtext{ with } \bar\primal^{(1)}_2=0  \eqfinv 
          \\
          [-1,1] \times \textrm{sign}\np{ \bar\primal^{(1)}_2 } 
          & 
          \mtext{ if } \lzero\np{ \bar\primal^{(1)} } = 1
          \mtext{ with } \bar\primal^{(1)}_1=0  \eqfinv 
          \\
          \textrm{sign}\np{ \bar\primal^{(1)} } 
          &
          \mtext{ if } \lzero\np{ \bar\primal^{(1)} } = 2 \eqfinv
        \end{cases}
            \label{eq:FACE_1_d=2}
    \end{align}
  \end{subequations}
  where \( \textrm{sign}\np{ \bar\primal^{(1)} }
  = \bp{ \textrm{sign}\np{ \bar\primal^{(1)}_1 }, 
    \textrm{sign}\np{ \bar\primal^{(1)}_2 } } \)
  is the vector of~$\RR^2$ made of the signs ($-1,0,1$) of the 
  two components. 
  \smallskip

  \noindent $\bullet$ 
  Suppose that 
  \( \bp{ \bar\primal^{(1)}, \bar\primal^{(2)} } =\np{\primal,0} \)
  satisfies~\eqref{eq:KKT_optimization_problem_calL0_lambda_not=0_d=2}
  for a certain \( \lambda >0 \).
  We will show that this is equivalent to 
  \( 0< \primal_1 \), \( 0< \primal_2 \)
  and \( \primal_1+\primal_2 =1 \), which implies 
  Item~\ref{it:KKT_optimization_problem_calL0_d=2_one}.
  \smallskip

  \noindent    By~\eqref{eq:FACE_l_=BALL} for $l=d=2$, we get that 
  \( (2+\lambda) \FACE_{ \EuclidianTop{2}{\BALL} }\np{ 0 }
  =(2+\lambda) \BALL \), where $\BALL$ is the Euclidian unit ball of~$\RR^2$,
  so that Equation~\eqref{eq:KKT_optimization_problem_calL0_lambda_not=0_d=2}
  is equivalent to 
  \begin{equation}
    (1+\lambda) \FACE_{ \EuclidianTop{1}{\BALL} }\np{ \primal }  
    \cap
    (2+\lambda) \BALL 
    \not = \emptyset 
    \eqsepv
    \EuclidianSupportNorm{1}{\primal} = \primal_1+\primal_2 = 1 
    \eqfinp
    \label{eq:KKT_optimization_problem_calL0_lambda_not=0_d=2_bis}
  \end{equation}
  By~\eqref{eq:FACE_1_d=2},
  we distinguish the following subcases that correspond to 
  different expressions for 
  \( \FACE_{ \EuclidianTop{1}{\BALL} }\np{ \primal } \). 
  \begin{itemize}

  \item[-]
    If \( \lzero\np{ \primal } = 0 \), then \( \primal_1=\primal_2=0 \).
    But this contradicts \( \primal_1+\primal_2 = 1 \)
    in~\eqref{eq:KKT_optimization_problem_calL0_lambda_not=0_d=2_bis}.

  \item[-] 
    If \( \lzero\np{ \primal } = 1 \) with \( \primal_2=0 \), then 
    \( \primal=\np{ 1 , 0 } \) because \( \primal_1+\primal_2 = 1 \)
    by~\eqref{eq:KKT_optimization_problem_calL0_lambda_not=0_d=2_bis},
    and \( \primal=\np{\primal_1,\primal_2} \in \RR_+^2 \) by hypothesis.
    But this contradicts the assumption that 
    \( \primal_1^2+\primal_2^2<1 \). 

  \item[-] 
    If \( \lzero\np{ \primal } = 1 \) with \( \primal_1=0 \),
    we also arrive at a contradiction.

  \item[-] 
    If \( \lzero\np{ \primal } = 2 \), then 
    \( (1+\lambda)\FACE_{ \EuclidianTop{1}{\BALL} }\np{ \primal }
    =\{ \np{ 1+\lambda, 1+\lambda } \} \)
    by~\eqref{eq:FACE_1_d=2}.

    On the one hand (necessity),
    we show that necessarily \( 0< \lambda \leq \sqrt{2} \).
    Indeed, \eqref{eq:KKT_optimization_problem_calL0_lambda_not=0_d=2_bis}
    implies that 
    \( \norm{ \np{ (1+\lambda)\textrm{sign}\np{ \primal_1 } ,
        (1+\lambda)\textrm{sign}\np{ \primal_2 } } } \leq 2+\lambda \),
    which gives  \( \sqrt{2}(1+\lambda) \leq 2+\lambda \), hence 
    \( 0< \lambda \leq \sqrt{2} \).

    On the other hand (sufficiency), if we put
    \( \bp{ \bar\primal^{(1)}, \bar\primal^{(2)} } =\np{\primal,0} \)
    where 
    \( \EuclidianSupportNorm{1}{\primal} = \primal_1+\primal_2 =1 \)
    and \( \lzero\np{ \primal } = 2 \), that is, 
    \( 0< \primal_1 \), \( 0< \primal_2 \), 
    then 
    \eqref{eq:KKT_optimization_problem_calL0_lambda_not=0_d=2_bis}
    is satisfied for any 
    \( 0< \lambda \leq \sqrt{2} \).
  \end{itemize}
  Therefore, we have proven that 
  \( \bp{ \bar\primal^{(1)}, \bar\primal^{(2)} } =\np{\primal,0} \)
  satisfies~\eqref{eq:KKT_optimization_problem_calL0_lambda_not=0_d=2}
  for a certain \( \lambda >0 \) if and only if 
  \( 0< \primal_1 \), \( 0< \primal_2 \)
  and \( \primal_1+\primal_2 =1 \) (condition
  included in Item~\ref{it:KKT_optimization_problem_calL0_d=2_one}).
  \smallskip

  \noindent $\bullet$ 
  Suppose that 
  \( \bp{ \bar\primal^{(1)}, \bar\primal^{(2)} } =\np{0,\primal} \)
  satisfies~\eqref{eq:KKT_optimization_problem_calL0_lambda_not=0_d=2}
  for a certain \( \lambda >0 \).
  We will show that this case is impossible.
  Indeed, Equation~\eqref{eq:KKT_optimization_problem_calL0_lambda_not=0_d=2_b}
  implies that \( \sqrt{\primal_1^2+\primal_2^2}=
  \norm{\primal}=\EuclidianSupportNorm{2}{\primal} = 1 \). 
  But this contradicts the assumption that 
  \( \primal=\np{\primal_1,\primal_2} \in \RR_+^2 \) 
  is such that \( \primal_1^2+\primal_2^2<1 \). 
  \smallskip

  \noindent $\bullet$ 
  Suppose that 
  \( \bar\primal^{(1)} \not = 0 \) and \( \bar\primal^{(2)} \not = 0 \)
  are such that 
  \( \bp{ \bar\primal^{(1)}, \bar\primal^{(2)} } \)
  satisfies~\eqref{eq:KKT_optimization_problem_calL0_lambda_not=0_d=2}
  for a certain \( \lambda >0 \).
  We will show that this is equivalent to 
  Item~\ref{it:KKT_optimization_problem_calL0_d=2_two},
  Item~\ref{it:KKT_optimization_problem_calL0_d=2_two_bis}
  or Item~\ref{it:KKT_optimization_problem_calL0_d=2_three}.
  But, before that, notice that, as 
  \(  \EuclidianSupportNorm{1}{\bar\primal^{(1)}} + \EuclidianSupportNorm{2}{\bar\primal^{(2)}} =1 \),
  by~\eqref{eq:KKT_optimization_problem_calL0_lambda_not=0_b},
  then 
  \begin{equation}
    {\cal L}_0\np{\primal} = 1 + \EuclidianSupportNorm{2}{\bar\primal^{(2)}}
    = 2 - \EuclidianSupportNorm{1}{\bar\primal^{(1)}} 
    \eqfinv 
    \label{eq:practical}
  \end{equation}
  which will be practical to obtain formulas for~\( {\cal L}_0\np{\primal} \). 
  \smallskip

  As \( \bar\primal^{(2)} \not = 0 \), then 
  \( \FACE_{ \EuclidianTop{2}{\BALL} }\np{ \bar\primal^{(2)} } 
  = \{ \frac{ \bar\primal^{(2)} }{ \norm{ \bar\primal^{(2)} } } \} \)
  by~\eqref{eq:FACE_d_=pointing_vector}.
  Therefore, Equation~\eqref{eq:KKT_optimization_problem_calL0_lambda_not=0_d=2}
  is equivalent to 
  \begin{subequations}
    \begin{align}
      (2+\lambda) \frac{ \bar\primal^{(2)} }{ \norm{ \bar\primal^{(2)} } } 
      \in (1+\lambda) \FACE_{ \EuclidianTop{1}{\BALL} }\np{ \bar\primal^{(1)} }
      \eqfinv
      \label{eq:KKT_optimization_problem_calL0_lambda_not=0_d=2_ter_a}
      \\
      \EuclidianSupportNorm{1}{\bar\primal^{(1)}} + \EuclidianSupportNorm{2}{\bar\primal^{(2)}} 
      = \module{ \bar\primal^{(1)}_1 } + \module{ \bar\primal^{(1)}_2 } +
      \sqrt{ \module{ \bar\primal^{(2)}_1 }^2 + \module{ \bar\primal^{(2)}_2 }^2 }
      = 1 
      \eqfinv
      \label{eq:KKT_optimization_problem_calL0_lambda_not=0_d=2_ter_b}
      \\
      \bar\primal^{(1)} + \bar\primal^{(2)} = \primal
      \eqfinp
      \label{eq:KKT_optimization_problem_calL0_lambda_not=0_d=2_ter_c}    
    \end{align}
    \label{eq:KKT_optimization_problem_calL0_lambda_not=0_d=2_ter}
  \end{subequations}
  By~\eqref{eq:FACE_1_d=2},
  we distinguish the following four subcases that correspond to 
  different expressions for the face~\( \FACE_{ \EuclidianTop{1}{\BALL} }\np{ \bar\primal^{(1)} } \).
  \begin{itemize}
  \item[-] 
    As \( \bar\primal^{(1)} \not = 0 \), we do not consider the case 
    \( \lzero\np{ \bar\primal^{(1)} } = 0 \).

  \item[-] 
    Suppose that \( \lzero\np{ \bar\primal^{(1)} } = 1 \) with \( \bar\primal^{(1)}_2=0 \).
    Then, on the one hand,
    \( \FACE_{ \EuclidianTop{1}{\BALL} }\np{ \bar\primal^{(1)} } 
    =\textrm{sign}\np{ \bar\primal^{(1)}_1 } \times [-1,1] \) 
    by~\eqref{eq:FACE_1_d=2}, so that 
    Equation~\eqref{eq:KKT_optimization_problem_calL0_lambda_not=0_d=2_ter_a}
    is equivalent to 
    \begin{subequations}
      \begin{align*}
        \frac{ \bar\primal^{(2)}_1 }%
        { \sqrt{ \module{ \bar\primal^{(2)}_1 }^2 + \module{ \bar\primal^{(2)}_2 }^2 } } =
        \frac{ 1+ \lambda }{ 2+\lambda } \textrm{sign}\np{ \bar\primal^{(1)}_1 } 
        \eqfinv
        \quad 
        \frac{ \module{ \bar\primal^{(2)}_2 } }%
        { \sqrt{ \module{ \bar\primal^{(2)}_1 }^2 + \module{ \bar\primal^{(2)}_2 }^2 } } 
        \leq
        \frac{ 1+ \lambda }{ 2+\lambda } 
        \eqfinp 
      \end{align*}
    \end{subequations}
    On the other hand, 
    \( \bar\primal^{(1)}=\np{\bar\primal^{(1)}_1,0} \)
    where \( \bar\primal^{(1)}_1 \not = 0 \), so that 
    Equations~\eqref{eq:KKT_optimization_problem_calL0_lambda_not=0_d=2_ter}
    are equivalent to 
    \begin{align*}
      \module{ \bar\primal^{(1)}_1 } + 
      \sqrt{ \module{ \bar\primal^{(2)}_1 }^2 + \module{ \bar\primal^{(2)}_2 }^2 }
      = 1 
      \eqfinv
      \quad 
      \bar\primal^{(1)}_1 + \bar\primal^{(2)}_1 = \primal_1 
      \eqfinv
      \quad 
      \bar\primal^{(2)}_2 = \primal_2 
      \eqfinp 
    \end{align*}
    Therefore, 
    Equation~\eqref{eq:KKT_optimization_problem_calL0_lambda_not=0_d=2_ter}
    is equivalent to 
    \begin{subequations}
      \begin{align}
        \frac{ \bar\primal^{(2)}_1 }%
        { 1 - \module{ \bar\primal^{(1)}_1 } } 
        =
        \frac{ 1+ \lambda }{ 2+\lambda } \textrm{sign}\np{ \bar\primal^{(1)}_1 } 
        \eqfinv
        \label{eq:KKT_optimization_problem_calL0_lambda_not=0_d=2_cuatro_a}
        \\
        \module{ \bar\primal^{(2)}_2 }
        \leq 
        \textrm{sign}\np{ \bar\primal^{(1)}_1 } \bar\primal^{(2)}_1 
        \eqfinv
        \label{eq:KKT_optimization_problem_calL0_lambda_not=0_d=2_cuatro_b}
        \\
        \module{ \bar\primal^{(1)}_1 } + 
        \sqrt{ \module{ \bar\primal^{(2)}_1 }^2 + \module{ \bar\primal^{(2)}_2 }^2 }
        = 1 
        \eqfinv
        \label{eq:KKT_optimization_problem_calL0_lambda_not=0_d=2_cuatro_c}
        \\
        \bar\primal^{(1)}_1 + \bar\primal^{(2)}_1 = \primal_1 
        \eqfinv
        \label{eq:KKT_optimization_problem_calL0_lambda_not=0_d=2_cuatro_d}
        \\
        \bar\primal^{(2)}_2 = \primal_2 
        \eqfinv 
        \label{eq:KKT_optimization_problem_calL0_lambda_not=0_d=2_cuatro_e}
      \end{align}
      \label{eq:KKT_optimization_problem_calL0_lambda_not=0_d=2_cuatro}
    \end{subequations}
    and we will now show that there exists \( \lambda > 0 \) such that
    \eqref{eq:KKT_optimization_problem_calL0_lambda_not=0_d=2_cuatro}
    holds true if and only if 
    Item~\ref{it:KKT_optimization_problem_calL0_d=2_two} holds true. 

    On the one hand (necessity),
    from~\eqref{eq:KKT_optimization_problem_calL0_lambda_not=0_d=2_cuatro_a}, 
    we deduce that \( \bar\primal^{(2)}_1 \) and 
    \( \bar\primal^{(1)}_1 \) have the same sign;
    this common sign must therefore be \( \textrm{sign}\np{ \primal_1 } \),
    as \( \bar\primal^{(1)}_1 + \bar\primal^{(2)}_1 = \primal_1 \)
    by~\eqref{eq:KKT_optimization_problem_calL0_lambda_not=0_d=2_cuatro_d};
    since \( \primal=\np{\primal_1,\primal_2} \in \RR_+^2 \), we obtain that 
    \( \primal_1 \geq 0 \), hence \( \bar\primal^{(1)}_1 > 0 \) 
    and \( \primal_1 > 0 \).
    Therefore, we easily get that 
    \( \bar\primal^{(1)}=\np{\bar\primal^{(1)}_1,0} \), 
    where \( \bar\primal^{(1)}_1 > 0\), 
    and that 
    \( \bar\primal^{(2)}=\np{\primal_1 - \bar\primal^{(1)}_1,\primal_2} \),
    by~\eqref{eq:KKT_optimization_problem_calL0_lambda_not=0_d=2_cuatro_d}--\eqref{eq:KKT_optimization_problem_calL0_lambda_not=0_d=2_cuatro_e},
    with \( \primal_1 > \bar\primal^{(1)}_1 \), since \( \bar\primal^{(2)}_1 > 0\).
    Replacing the values
    in~\eqref{eq:KKT_optimization_problem_calL0_lambda_not=0_d=2_cuatro_c}
    --- 
    where \( \primal_1 > \bar\primal^{(1)}_1 > 0 \) and
    \( 1 > \primal_1 \) since \( \primal_1^2+\primal_2^2<1 \)
    --- 
    we get 
    \( \bar\primal^{(1)}_1 + 
    \sqrt{ \np{ \primal_1 - \bar\primal^{(1)}_1 }^2 + \primal_2^2 }
    = 1 \), from which we deduce that 
    \( \bar\primal^{(1)}_1 = \frac{1 - \np{ \primal_1^2+ \primal_2^2 } }%
    {2(1-\primal_1)} \); we have that \( \bar\primal^{(1)}_1 > 0\) because $\primal_1<1$; 
    the condition \( \primal_1 > \bar\primal^{(1)}_1 \) implies that 
    \( \primal_1 + \primal_2 > 1 \). 
    From~\eqref{eq:KKT_optimization_problem_calL0_lambda_not=0_d=2_cuatro_a}, 
    we deduce that 
    \( \frac{ \bar\primal^{(2)}_1 }{ 1 - \module{ \bar\primal^{(1)}_1 } } =
    \frac{ 1+ \lambda }{ 2+\lambda } \in ]1/2,1[\), 
    hence that 
    \( \frac{ \primal_1 - \bar\primal^{(1)}_1 }{1 - \bar\primal^{(1)}_1 } < 1 \)
    and
    \( 1/2 < \frac{ \primal_1 - \bar\primal^{(1)}_1 }{1 - \bar\primal^{(1)}_1 } \)
    by~\eqref{eq:KKT_optimization_problem_calL0_lambda_not=0_d=2_cuatro_b};
    we are going to detail these two inequalities, one after the other.
    We have that \( \frac{ \primal_1 - \bar\primal^{(1)}_1 }{1 - \bar\primal^{(1)}_1 } < 1 \)
    because \( 1 > \primal_1 > \bar\primal^{(1)}_1 \).
    The condition \( 1/2 < \frac{ \primal_1 - \bar\primal^{(1)}_1 }{1 - \bar\primal^{(1)}_1 } \) implies that  
    \( 0 < \primal_2 - \sqrt{3}(1-\primal_1) \);
    from~\eqref{eq:KKT_optimization_problem_calL0_lambda_not=0_d=2_cuatro_b}, 
    with \( \bar\primal^{(1)}_1 > 0\) and 
    \( \bar\primal^{(2)}_2 = \primal_2 > 0 \),
    we get that 
    \( \primal_2 \leq \primal_1 - \bar\primal^{(1)}_1 =
    \primal_1 - \frac{1 - \np{ \primal_1^2+ \primal_2^2 } }%
    {2(1-\primal_1)} \);
    rearranging terms, we find that this latter inequality 
    is equivalent to 
    \( \bp{ \primal_2 - \np{\sqrt{2}+1}\np{1-\primal_1} }
    \bp{ \primal_2 + \np{\sqrt{2}-1}\np{1-\primal_1} } \geq 0 \);
    as $\primal_1<1$ and \( \primal_2 >0 \), we finally get that 
    \( \primal_2 - \np{\sqrt{2}+1}\np{1-\primal_1} \geq 0 \).
    From \( \primal_2 \leq \primal_1 - \bar\primal^{(1)}_1 \) 
    where \( \bar\primal^{(1)}_1 > 0 \),
    we also deduce that necessarily \( \primal_1 > \primal_2 \).

    Finally, 
    Equation~\eqref{eq:KKT_optimization_problem_calL0_lambda_not=0_d=2_cuatro}
    implies that 
    \( \primal_1 > 0 \),
    \( \primal_1 + \primal_2 > 1 \),
    \( \primal_2 - \sqrt{3}(1-\primal_1) > 0 \),
    \( \primal_2 - \np{\sqrt{2}+1}\np{1-\primal_1}  \geq 0 \), 
    \( \primal_1 > \primal_2 \),
    and
    \( \bar\primal^{(1)} =
    \bp{ \frac{ 1-\np{ \primal_1^2+ \primal_2^2 } }{ 2(1-\primal_1) } , 0 } \),
    \( \bar\primal^{(2)} =
    \bp{ \frac{ 2\primal_1 -\primal_1^2+ \primal_2^2 -1 }%
      { 2(1-\primal_1) } , \primal_2 } \): thus, 
    using the property that 
    \[
      \primal_2 - \np{\sqrt{2}+1}\np{1-\primal_1}  \geq 0 \text{ and }
      1 > \primal_1 \Rightarrow 
      \begin{cases}
        \primal_1+\primal_2  \geq \sqrt{2}\np{1-\primal_1}+1 >1 
        \\ 
        \primal_2 - \sqrt{3}(1-\primal_1) > \primal_2 - \np{\sqrt{2}+1}\np{1-\primal_1}  \geq 0 
        \eqfinv 
      \end{cases}
    \]
    we obtain that \( \primal_2 - \np{\sqrt{2}+1}\np{1-\primal_1}  \geq 0 \)
    and \( 1 > \primal_1 \); multiplying the first inequality by $\sqrt{2}-1$,
    we finally obtain \( \primal_1 + \np{\sqrt{2}-1}\primal_2  \geq 1 \) 
    and \( 1 > \primal_1 \), 
    that is, Item~\ref{it:KKT_optimization_problem_calL0_d=2_two}. 
    
    On the other hand (sufficiency), 
    if we suppose that Item~\ref{it:KKT_optimization_problem_calL0_d=2_two} holds
    it is straightforward to follow all the above computations
    and to obtain that 
    Equation~\eqref{eq:KKT_optimization_problem_calL0_lambda_not=0_d=2_cuatro}
    holds true with \( \lambda > 0 \) the unique solution to 
    \( \frac{ \bar\primal^{(2)}_1 }{ 1 - \module{ \bar\primal^{(1)}_1 } } =
    \frac{ 1+ \lambda }{ 2+\lambda } \in ]1/2,1[\).

    By~\eqref{eq:practical}, we obtain that 
    \( {\cal L}_0\bp{ \np{\primal_1,\primal_2} } = 
    2- \module{ \bar\primal^{(1)}_1 } - \module{ \bar\primal^{(1)}_2 }
    = \frac{3}{2} - \frac{\primal_1}{2} + 
    \frac{ \primal_2^2 }{2(1-\primal_1)} \). 
    
  \item[-] 
    If \( \lzero\np{ \bar\primal^{(1)} } = 1 \) with \( \bar\primal^{(1)}_1=0 \),
    we do the same analysis, and we obtain 
    Item~\ref{it:KKT_optimization_problem_calL0_d=2_two_bis},
    and \( {\cal L}_0\bp{ \np{\primal_1,\primal_2} } 
    = \frac{3}{2} - \frac{\primal_2}{2} + 
    \frac{ \primal_1^2 }{2(1-\primal_2)} \). 

  \item[-] 
    Suppose that 
    \( \lzero\np{ \bar\primal^{(1)} } = 2 \).
    In this case, we have that 
    \(  \FACE_{ \EuclidianTop{1}{\BALL} }\np{ \bar\primal^{(1)} }
    = \{ \bp{ \textrm{sign}\np{ \bar\primal^{(1)}_1 } ,
      \textrm{sign}\np{ \bar\primal^{(1)}_2 } } \} \) by~\eqref{eq:FACE_1_d=2}.
    Therefore, 
    Equation~\eqref{eq:KKT_optimization_problem_calL0_lambda_not=0_d=2_ter}
    is equivalent to 
    \begin{subequations}
      \begin{align}
        \frac{ \bar\primal^{(2)}_1 }%
        { \sqrt{ \bmodule{ \bar\primal^{(2)}_1 }^2 + \bmodule{ \bar\primal^{(2)}_2 }^2 } }
        =
        \frac{ 1+ \lambda }{ 2+\lambda } \textrm{sign}\np{ \bar\primal^{(1)}_1 } 
        \eqfinv
        \label{eq:KKT_optimization_problem_calL0_lambda_not=0_d=2_cinco_a}
        \\
        \frac{ \bar\primal^{(2)}_2 }%
        { \sqrt{ \bmodule{ \bar\primal^{(2)}_1 }^2 + \bmodule{ \bar\primal^{(2)}_2 }^2 } }
        =
        \frac{ 1+ \lambda }{ 2+\lambda } \textrm{sign}\np{ \bar\primal^{(1)}_2 } 
        \eqfinv
        \label{eq:KKT_optimization_problem_calL0_lambda_not=0_d=2_cinco_b}
        \\
        \bmodule{ \bar\primal^{(1)}_1 } + \bmodule{ \bar\primal^{(1)}_2 } +
        \sqrt{ \bmodule{ \bar\primal^{(2)}_1 }^2 + \bmodule{ \bar\primal^{(2)}_2 }^2 }
        = 1 
        \eqfinv
        \label{eq:KKT_optimization_problem_calL0_lambda_not=0_d=2_cinco_c}
        \\
        \bar\primal^{(1)}_1 + \bar\primal^{(2)}_1 = \primal_1 
        \eqfinv
        \label{eq:KKT_optimization_problem_calL0_lambda_not=0_d=2_cinco_d}
        \\
        \bar\primal^{(1)}_2 + \bar\primal^{(2)}_2 = \primal_2 
        \eqfinv 
        \label{eq:KKT_optimization_problem_calL0_lambda_not=0_d=2_cinco_e}
      \end{align}
      \label{eq:KKT_optimization_problem_calL0_lambda_not=0_d=2_cinco}
    \end{subequations}
    and we will now show that there exists \( \lambda > 0 \) such that
    Equation~\eqref{eq:KKT_optimization_problem_calL0_lambda_not=0_d=2_cinco}
    holds true if and only if 
    Item~\ref{it:KKT_optimization_problem_calL0_d=2_three} holds true. 

    On the one hand (necessity),
    from~\eqref{eq:KKT_optimization_problem_calL0_lambda_not=0_d=2_cinco_a}--\eqref{eq:KKT_optimization_problem_calL0_lambda_not=0_d=2_cinco_b},
    we deduce that 
    \( \module{ \bar\primal^{(2)}_1 } =  \module{ \bar\primal^{(2)}_2 } \)
    --- because \( \module{ \textrm{sign}\np{ \bar\primal^{(1)}_1 } }
    = \module{ \textrm{sign}\np{ \bar\primal^{(1)}_2 } } = 1 \)
    since \( \lzero\np{ \bar\primal^{(1)} } = 2 \) --- 
    and that \( \textrm{sign}\np{ \bar\primal^{(1)} } =
    \textrm{sign}\np{ \bar\primal^{(2)} } \).
    This common sign must therefore be \( \textrm{sign}\np{ \primal } \),
    as \( \bar\primal^{(1)} + \bar\primal^{(2)} = \primal \) 
    by~\eqref{eq:KKT_optimization_problem_calL0_lambda_not=0_d=2_cinco_d}--\eqref{eq:KKT_optimization_problem_calL0_lambda_not=0_d=2_cinco_e}. 
    Since \( \lzero\np{ \bar\primal^{(1)} } = 2 \) and 
    \( \primal=\np{\primal_1,\primal_2} \in \RR_+^2 \), we get that 
    \( \primal_1 > 0 \) and  \( \primal_2 > 0 \), 
    so that we put \( \bar\primal^{(2)}_1 = \bar\primal^{(2)}_2 = \beta >0 \).
    By~\eqref{eq:KKT_optimization_problem_calL0_lambda_not=0_d=2_cinco_d}--\eqref{eq:KKT_optimization_problem_calL0_lambda_not=0_d=2_cinco_e},
    we get that 
    \( \bar\primal^{(1)}_1 = \primal_1 - \beta  > 0 \)
    and \( \bar\primal^{(1)}_2 = \primal_2 - \beta  > 0 \);
    replacing the values
    in~\eqref{eq:KKT_optimization_problem_calL0_lambda_not=0_d=2_cinco_c},
    we obtain that 
    \( \primal_1 - \beta + \primal_2 - \beta + \sqrt{2} \beta = 1 \);
    this gives \( \beta = \frac{\primal_1 + \primal_2 -1 }{2-\sqrt{2}} \).
    Therefore, 
    \( \beta > 0 \iff \primal_1 + \primal_2 >1 \),
    \( \beta < \primal_1 \iff \np{\sqrt{2}-1}\primal_1 + \primal_2 < 1 \)
    and
    \( \beta < \primal_2 \iff \primal_1 + \np{\sqrt{2}-1}\primal_2 < 1 \).

    Finally, 
    Equation~\eqref{eq:KKT_optimization_problem_calL0_lambda_not=0_d=2_cinco}
    implies that 
    \( \primal_1 + \primal_2 > 1 \),
    \( \np{\sqrt{2}-1}\primal_1 + \primal_2 < 1 \),
    \( \primal_1 + \np{\sqrt{2}-1}\primal_2 < 1 \),
    and 
    \( \bar\primal^{(1)} = 
    \bp{ \frac{ 1- \np{\sqrt{2}-1}{\primal_1} - {\primal_2} }{ 2\np{\sqrt{2}-1} } , 
      \frac{ 1- {\primal_1} - \np{\sqrt{2}-1}{\primal_2} }{ 2\np{\sqrt{2}-1} } } \),
    \( \bar\primal^{(2)} =
    \bp{ \frac{ {\primal_1} + {\primal_2} -1 }{ 2\np{\sqrt{2}-1} } , 
      \frac{ {\primal_1} + {\primal_2} -1 }{ 2\np{\sqrt{2}-1} } } \):
    thus, Item~\ref{it:KKT_optimization_problem_calL0_d=2_three} holds true. 
    
    On the other hand (sufficiency), 
    if we suppose that Item~\ref{it:KKT_optimization_problem_calL0_d=2_three} holds
    true, 
    it is straightforward to follow all the above computations
    and to obtain that 
    Equation~\eqref{eq:KKT_optimization_problem_calL0_lambda_not=0_d=2_cinco}
    holds true with \( \lambda= \sqrt{2} \).
    
    By~\eqref{eq:practical}, we obtain that 
    \( {\cal L}_0\bp{ \np{\primal_1,\primal_2} } = 
    1+ \sqrt{ \bmodule{ \bar\primal^{(2)}_1 }^2 + \bmodule{ \bar\primal^{(2)}_2 }^2 } 
    = 1 + \frac{\primal_1 + \primal_2 -1 }{\sqrt{2}-1} \). 
  \end{itemize}
  \smallskip

  This ends the proof. 
\end{proof}

\newcommand{\noopsort}[1]{} \ifx\undefined\allcaps\def\allcaps#1{#1}\fi

\end{document}